\newtheorem{teo}{Theorem}[section]
\newtheorem{prop}[teo]{Proposition}
\newtheorem{lem}[teo]{Lemma}
\newtheorem{cor}[teo]{Corollary}
\newtheorem{conj}[teo]{Conjecture}
\newtheorem{defi}[teo]{Definition}
\newcommand{\GSp}{{\rm GSp}}
\newcommand{\GL}{{\rm GL}}
\newcommand{\Sh}{{\rm Sh}}
\newcommand{\Supp}{{\rm Supp}}
\newcommand{\Gal}{{\rm Gal}}
\newcommand{\der}{{\rm der}}
\newcommand{\ad}{{\rm ad}}
\newcommand{\CC}{{\mathbb C}}
\newcommand{\RR}{{\mathbb R}}
\newcommand{\ZZ}{{\mathbb Z}}
\newcommand{\QQ}{{\mathbb Q}}
\newcommand{\HH}{{\mathbb H}}
\newcommand{\Hcal}{{\mathcal H}}
\newcommand{\SSS}{{\mathbb S}}
\newcommand{\AAA}{{\mathbb A}}
\newcommand{\lto}{\longrightarrow}
\def\Fh{\mathfrak{h}}
\def\Fu{\mathfrak{u}}
\def\Fm{\mathfrak{m}}
\def\Fz{\mathfrak{z}}
\newcommand{\cF}{\mathcal{F}}
\newcommand{\cA}{{\mathcal A}}
\newcommand{\cZ}{{\mathcal Z}}
\newcommand{\ol}{\overline}
\newcommand{\wt}{\widetilde}
\DeclareMathOperator{\Radu}{{\mathrm{Rad}^{\mathrm{u}}}}
\DeclarePairedDelimiterX{\Nm}[1]{\lVert}{\rVert}{#1}
\title{Inner Galois equidistribution\\ in $S$-Hecke orbits}
\author{ Rodolphe Richard, Andrei Yafaev}
\date{\today}
 \address{Richard, Yafaev: UCL, Department of Mathematics, Gower street, WC1E 6BT, London, UK}
\email{Rodolphe.Richard@normalesup.org, yafaev@math.ucl.ac.uk}
\begin{document}
\setcounter{tocdepth}{1}
\setcounter{secnumdepth}{4}
\maketitle

\tableofcontents

\section{Introduction.}

This paper concerns itself with certain special cases of the Zilber-Pink conjecture on unlikely intersections
in Shimura varieties and some of its natural generalisations.

Let $(G,X)$ be a Shimura datum, let $K$ be a compact open subgroup of $G(\AAA_f)$, and let
\[
\Sh_K(G,X) = G(\QQ)\backslash X\times G(\AAA_f)/K
\]
be the associated Shimura variety. 
We refer to \cite{Milne} and references therein for definitions and facts related to Shimura varieties. Throughout the paper we always assume that $G$ is the 
generic Mumford-Tate group on $X$. This is a standard convention in the theory of Shimura varieties.
We will write a point of $\Sh_K(G,X)$ as $s=\overline{(h,t)}$ - the double class of the element $(h,t) \in X \times G(\AAA_f)$.

The \emph{Hecke orbit} of a point of $\Sh_K(G,X)$ is defined as follows.

\begin{defi}[Hecke orbit] Let~$s=\ol{(h, t)}$ be a point of~$\Sh_K(G,X)$. We define the \emph{Hecke orbit} of the point~$s$ in~$\Sh_K(G,X)$ to be the set
$$
\Hcal(s) = \left\{ \ol{(h, t\cdot g)}~\middle|~g \in G(\AAA_f) \right\}\subseteq \Sh_K(G,X).
$$
\end{defi}

We refer to \cite{Moonen} for a definition of \emph{weakly special subvarieties}\footnote{A concise characterisation of \emph{weakly special subvarieties}, the one studied in \cite{Moonen}, is the following: they are the complex algebraic subvarieties of~$\Sh_K(G,X)$ which are the  images of totally geodesic submanifolds of the symmetric space~$X\times\{t\}$, with some~$t\in G(\AAA_f)$. Each such subvariety is isomorphic to a component of some Shimura variety, that is, an arithmetic quotient of a Hermitian symmetric domain. Weakly special subvarieties containing smooth special points are called special subvarieties. A special subvariety is a 
component of the image of a morphism of Shimura varieties induced by a morphism of Shimura data.
In a certain sense a weakly special subvariety is a translate of a special subvariety.} of $\Sh_K(G,X)$ and related notions. 
Note that several equivalent definitions (different in flavour) are given in \cite{U1}, \cite{UY3} and \cite{UY2}.

The Andr\'e-Pink-Zannier conjecture is the following statement.

\begin{conj}[Andr\'e-Pink-Zannier] \label{APZ}
Irreducible components of the Zariski closure of any subset~$\Sigma$ of $\Hcal(s)$ are weakly special subvarieties.
\end{conj}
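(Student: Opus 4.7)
The plan is to follow the Pila--Zannier paradigm as developed for Shimura varieties by Ullmo--Yafaev, Klingler--Ullmo--Yafaev and Daw--Orr, using the inner $S$-Hecke Galois equidistribution theorem proved in this paper as the substitute for classical Galois orbit lower bounds. Let $Z\subseteq \Sh_K(G,X)$ denote the Zariski closure of $\Sigma$, let $Z_0$ be one of its irreducible components, and the aim is to show that $Z_0$ is weakly special. After passing to a connected component, I would work on the uniformization $\pi\colon X^+\to \Gamma\backslash X^+\subseteq \Sh_K(G,X)$, fix a semialgebraic fundamental set $\mathcal{F}\subset X^+$, and study the preimage $\widetilde{Z}_0 = \pi^{-1}(Z_0)\cap \mathcal{F}$, which is definable in the o-minimal structure $\RR_{\mathrm{an,exp}}$.

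\textbf{Ax--Lindemann reduction.} By the hyperbolic Ax--Lindemann--Weierstrass theorem of Klingler--Ullmo--Yafaev, $Z_0$ will be weakly special as soon as the union of the positive dimensional semialgebraic blocks contained in $\widetilde{Z}_0$ has Zariski dense image in $Z_0$. The task is therefore to exhibit, on $\widetilde{Z}_0$, many algebraic relations among lifts of points of $\Sigma$.

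\textbf{Galois orbits meet counting.} The Galois action on $s\in\Sigma$ factors through the reciprocity morphism into $G(\AAA_f)$, so $\Gal(\oQ/E)\cdot s\subseteq \Hcal(s)$ for an appropriate number field $E$ of definition of $s$. The inner Galois equidistribution theorem of the paper should yield a polynomial lower bound $|\Gal(\oQ/E)\cdot s|\gg T^\delta$ in terms of a natural $S$-adic complexity $T$ of a Hecke representative $g\in G(\AAA_f)$ sending a chosen base point to $s$. Lifting these conjugates to $\mathcal{F}$ gives rational (or algebraic) points on $\widetilde{Z}_0$ of height polynomial in $T$; if $Z_0$ were not weakly special, Pila--Wilkie would produce a matching upper bound $O(T^\epsilon)$ for the number of such points of height $\leq T$ lying outside a controlled family of positive dimensional semialgebraic blocks on $\widetilde{Z}_0$. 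Comparing the two bounds forces the presence of many such blocks; Ax--Lindemann then yields positive dimensional weakly special subvarieties of $Z_0$ passing through points of $\Sigma$. An induction on $\dim Z_0$, combined with an Ullmo--Yafaev finiteness argument for weakly special subvarieties of bounded complexity containing a given point, upgrades this to $Z_0$ itself being weakly special.

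\textbf{Main obstacle.} The chief difficulty is the Galois orbit lower bound. In the non-CM regime one has no direct subconvexity input, and the $S$-arithmetic inner equidistribution statement of the paper is the natural replacement. Converting equidistribution of measures into pointwise polynomial lower bounds $|\Gal(\oQ/E)\cdot s|\gg T^\delta$ that are \emph{uniform} in the Hecke representative $g\in G(\AAA_f)$ and \emph{compatible} with the height on $\widetilde{Z}_0$ used by Pila--Wilkie is the critical technical issue, and I expect the bulk of the paper's work to be concentrated there.
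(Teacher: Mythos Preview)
The statement you are trying to prove is Conjecture~\ref{APZ}, the full Andr\'e--Pink--Zannier conjecture for the \emph{entire} Hecke orbit~$\Hcal(s)$. The paper does \emph{not} prove this; it is stated as an open conjecture. What the paper proves is the weaker $S$-Hecke version (Conjecture~\ref{APS}) under the additional $S$-Shafarevich hypothesis, via Theorems~\ref{Theoreme1} and~\ref{Theoreme2}. Your proposal therefore cannot be compared to ``the paper's own proof of that statement'', because there is none.

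Beyond this, your proposed strategy is not the one the paper uses for the results it does prove. You outline a Pila--Zannier/o-minimal argument: Pila--Wilkie counting on a definable fundamental set, Galois lower bounds, Ax--Lindemann, induction on dimension. The paper's method is entirely different and contains no point counting and no o-minimality (except a single appeal to Ax--Lindemann, Proposition~\ref{Ax}, at the very end to pass from topological to Zariski closure). Instead the paper lifts the Galois-orbit probability measures~$\mu_n$ to an $S$-arithmetic homogeneous space~$\Gamma\backslash G^{\der}(\RR\times\QQ_S)$, applies the Ratner-type equidistribution theorem of Richard--Zamojski to the translated measures~$\mu_\Omega\cdot g_n$, and obtains directly that the~$\mu_n$ equidistribute to canonical measures on weakly $S$-special real submanifolds, with the crucial \emph{inner} property~$\Supp(\mu_n)\subseteq\Supp(\mu_\infty)$. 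From this inner equidistribution the topological closure of~$\Sigma_E$ is read off as a finite union of weakly $S$-special submanifolds (Proposition~\ref{Sorite}); Zariski closure then follows from Ax--Lindemann.

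Your specific plan also has a genuine gap. You want to extract from the paper's equidistribution theorem a \emph{uniform polynomial lower bound}~$|\Gal(\oQ/E)\cdot s|\gg T^\delta$ in a Hecke complexity~$T$, to feed into Pila--Wilkie. The paper's theorem gives no such quantitative bound: it is a soft tight-convergence statement with no rate, valid only for $S$-Hecke orbits, and only under the $S$-Shafarevich hypothesis (which the paper shows is essentially necessary for the inner property). Converting qualitative equidistribution into the uniform polynomial Galois bounds required by the Pila--Zannier machinery is precisely the open problem that blocks the full conjecture; your proposal assumes it rather than addresses it.
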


This conjecture was formulated by Andr\'e for curves in Shimura varieties in \cite{A}, then by Pink 
for arbitrary subvarieties of mixed Shimura varieties in \cite{Pinky} and independently
by Zannier (unpublished). We therefore refer to this conjecture as the
Andr\'e-Pink-Zannier conjecture.

For \emph{curves} in Shimura varieties of abelian type, conjecture \ref{APZ} was proved by Orr in \cite{Orr}.
Orr also proves in \cite{Orr} that conjecture \ref{APZ} is a special case of the Zilber-Pink conjecture on unlikely intersections in Shimura varieties. 

In this paper we deal with the `$S$-Hecke orbit' version of Conjecture~\ref{APZ}. We actually prove much 
stronger statements than the conclusion of \ref{APZ}.

\subsection{The $S$-Andr\'{e}-Pink-Zannier conjecture}
We consider the following weaker notion of a Hecke orbit.

\begin{defi}[$S$-Hecke orbits]\label{DefiS-Hecke}
 Let $S$ be a finite set of prime numbers. 
\begin{enumerate}
\item
Write~$(g_\ell)_\ell$ an element of~$G(\AAA_f)$ viewed as a restricted product indexed by primes~$\ell$. We denote~$G_S$ the subgroup of~$G(\AAA_f)$  consisiting of elements~$(g_\ell)_\ell$ such that $g_\ell = 1$ for~$\ell\notin S$.\\ As~$S$ is a finite set, we may identify~$G_S$ with~$\prod_{\ell\in S} G(\QQ_\ell)$, or equivalently with~$G(\QQ_S)$ where~$\QQ_S=\prod_{\ell\in S} \QQ_\ell$.
\item 
For a point~$s=\ol{(h, t)}$ of $\Sh_K(G,X)$, we define the \emph{$S$-Hecke orbit} of~$s$ to be the subset
$$
\Hcal_S(s) = \left\{ \ol{(h, t\cdot g)}~\middle|~ g \in G_S \right\}\subseteq \Hcal(s).
$$
\end{enumerate}
\end{defi}

Let $E$ be a field of definition of $s$. The theory of Shimura varieties (the fact that Hecke correspondences are defined over the reflex field $E(G,X)$) shows that
points of $\Hcal(s)$ and $\Hcal_S(s)$ are defined over $\overline{E}$.

We introduce the following definition which will play a central role in what follows.

\begin{defi}[$S$-Shafarevich property] \label{SSha}
The point $s$ is said to satisfy the $S$-Shafarevich property or to be
of $S$-Shafarevich type if for every finite extension $F \subset \overline{E}$
of $E$, $\Hcal_S(s)$ contains only finitely many points defined over $F$.
\end{defi}

We immediately observe that the $S$-Shafarevich property is invariant by 
finite morphisms induced by the inclusions of compact open subgroups
$K' \subset K$.
The conclusions of Conjectures \ref{APZ} and \ref{APS} as well as those 
of our main theorems \ref{Theoreme1} and \ref{Theoreme2} are also invariant by 
replacing $K$ by a subgroup of finite index.
Therefore, throughout the paper we always assume \emph{the group $K$ to be 
\underline{neat}}.
More precisely, we choose the group $K$ as follows.
\begin{enumerate}
\item $K$ is a product $K = \prod K_p$ of compact open subgroups 
$K_p \subset G(\QQ_l)$.
\item Fix $l \geq 3$ and $l \notin S$. We assume that 
$K_l$ is contained in the group of elements congruent to the identity modulo $l$ 
with respect to some faithful representation $G \subset \GL_n$.
\end{enumerate}

As noted just above the assumptions cause no loss of any generality but
avoid many annoying technicalities. These assumptions will be kept throught the paper.

We will examine this property in great detail in section \ref{SectionGalois}. In particular,
we will obtain a group theoretic characterisation of this property.

We now formulate the corresponding weaker form of Conjecture~\ref{APZ} which 
will be the main object of study in this paper.

\begin{conj}[Andr\'e-Pink-Zannier conjecture for $S$-Hecke orbits] \label{APS}
Irreducible components of the Zariski closure of any subset of $\Hcal_S(s)$ are weakly special subvarieties.
\end{conj}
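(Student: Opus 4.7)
The plan is to prove Conjecture~\ref{APS} under the assumption that the base point $s$ enjoys the $S$-Shafarevich property of Definition~\ref{SSha}; the general conjecture should then be recovered from a detailed group-theoretic analysis of the $S$-Shafarevich property (the subject announced for Section~\ref{SectionGalois}). Throughout I exploit the standing neatness assumption on $K$.

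First I would reduce to geometry. Let $\Sigma\subseteq\Hcal_S(s)$ and let $V$ be an irreducible component of its Zariski closure. Replacing $\Sigma$ by $\Sigma\cap V$, I may assume $\Sigma$ is Zariski dense in $V$; the target becomes showing that $V$ itself is weakly special. Since Hecke correspondences are defined over the reflex field $E(G,X)$, the variety $V$ descends to some finite extension $F\subseteq \ol E$ of $E$, and $\Gal(\ol E/F)$ permutes $\Hcal_S(s)\cap V$. Hence for every $\sigma\in\Sigma$ the entire Galois orbit $\Gal(\ol E/F)\cdot \sigma$ lies in $V$.

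Next, the Galois-theoretic input. The Galois action on the $S$-component of the Hecke parameter is controlled by a continuous representation $\rho_S\colon \Gal(\ol E/F)\to H(\QQ_S)$ for a suitable reductive $\QQ$-subgroup $H\subseteq G$ coming from the Mumford--Tate group of $s$. The $S$-Shafarevich property translates — this is where Section~\ref{SectionGalois} is invoked — into openness of $\im(\rho_S)$ inside $H(\QQ_S)$. Combined with the previous paragraph, this forces $V$ to contain, for each $\sigma\in\Sigma$, an open $H(\QQ_S)$-Hecke-translate of $\sigma$ in the $S$-adic topology.

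The final step is to pass from $S$-adic openness to complex Zariski structure. I would establish an $S$-arithmetic equidistribution theorem (via homogeneous dynamics in the style of Mozes--Shah / Ratner applied to $H(\QQ_S)$, or direct harmonic analysis on $H_S$) showing that the Galois orbits above equidistribute on a canonical homogeneous closed subset $Y\subseteq \Sh_K(G,X)$; in particular $V\supseteq Y$. Transporting the $S$-adic density of $Y$ up the complex uniformisation $X\times\{t\}\to \Sh_K(G,X)$, the hyperbolic Ax--Lindemann--Weierstrass theorem identifies the complex Zariski closure of $Y$ with a weakly special subvariety $V'\subseteq V$ of positive dimension. A standard induction on $\dim V$, by passing to the quotient of $V$ by the weakly special closure of $V'$ and reapplying the argument fibrewise, concludes that $V$ itself is weakly special.

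The main obstacle, I expect, is precisely the bridge between $S$-adic dynamics and complex Zariski geometry: translating the openness of $\rho_S$ into a statement about the complex-analytic Zariski closure of an $H(\QQ_S)$-Hecke orbit, with enough uniformity to interact with Ax--Lindemann. The $S$-Shafarevich hypothesis yields openness at the $S$-adic places only, while Ax--Lindemann lives at the archimedean place, and reconciling the two is presumably the heart of the ``inner Galois equidistribution'' promised by the title.
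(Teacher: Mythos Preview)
Your proposal has the right architecture---lift to an $S$-arithmetic space, apply Ratner-type dynamics, descend via Ax--Lindemann--Weierstrass---but there is a genuine error in the Galois-theoretic input and a gap in the final step.

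\textbf{The error.} You assert that the $S$-Shafarevich property translates into \emph{openness} of the image of $\rho_S$ in $H(\QQ_S)$. This is false: openness is precisely the $S$-Mumford--Tate property~\eqref{S-MT}, which is strictly stronger. The paper's Proposition~\ref{charSha} gives the correct characterisation: $S$-Shafarevich is equivalent to $S$-semisimplicity (reductivity of the algebraic monodromy $H_S$) together with compactness of $Z_{G_S}(M)\backslash Z_{G_S}(U_S)$. This is what feeds into the Richard--Zamojski theorem: semisimplicity is needed for the ``analytic stability'' hypothesis, and the centraliser cocompactness is what allows one to absorb the ambiguity in the translating elements. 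Under $S$-Shafarevich alone the limit supports are only weakly $S$-\emph{special real submanifolds} (Definition~\ref{ws subm}), not weakly special subvarieties; the upgrade to weakly special subvarieties in Theorem~\ref{Theoreme1}\eqref{TheoremeMT} genuinely requires the stronger $S$-Mumford--Tate hypothesis. The Zariski-closure statement in Theorem~\ref{Theoreme2}\eqref{Theoreme2-1} is then recovered by applying Ax--Lindemann (Proposition~\ref{Ax}) to these real submanifolds.

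\textbf{The gap.} Your final step---``pass to the quotient of $V$ by the weakly special closure of $V'$ and induct''---does not make sense as stated: there is no well-defined quotient of an arbitrary subvariety by a weakly special subvariety, and even if there were, nothing guarantees that the residual piece carries a point of $S$-Shafarevich type. The paper avoids this entirely. The crucial ingredient you are missing is the \emph{inner} equidistribution property~\eqref{TheoremeInner} of Theorem~\ref{Theoreme1}: not only do the Galois-orbit measures $\mu_n$ converge to $\mu_\infty$, but $\Supp(\mu_n)\subseteq\Supp(\mu_\infty)$ for all $n$. This containment, combined with a finiteness argument on maximal elements among the weakly $S$-special submanifolds that occur (Proposition~\ref{Sorite}, especially parts~\ref{Scholie3} and~\ref{Scholie5}), shows that all but finitely many points of $\Sigma_E$ already lie in a fixed finite union of such submanifolds. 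No induction on $\dim V$ is needed.

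Finally, the paper does not claim that the general Conjecture~\ref{APS} follows from the $S$-Shafarevich case. On the contrary, the converse statement after Theorem~\ref{Theoreme2} shows that $S$-Shafarevich is essentially \emph{necessary} for the inner-equidistribution method to work, so your hope of recovering the unrestricted conjecture from Section~\ref{SectionGalois} alone is not realised here.
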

In this paper we prove results in the direction of  Conjecture~\ref{APS}. There are some known cases of this conjecture, all are implied by our results. In particular,
\begin{itemize}
 \item that Conjecture~\ref{APS} holds whenever~$(G,X)$ is of abelian type (proved in \cite{OrrThesis} by a different method)
 \item that Conjecture~\ref{APS} holds if~$s\in \Sh_K(G,X)$ is a special point (this is a special case of the 'weak Andr\'e-Oort conjecture' proven in \cite{KY});
\end{itemize}
 
The conclusion of our main theorem \ref{Theoreme1} holds under the assumption that the $S$-Shafarevich property (\ref{SSha}) holds.
We actually prove equidistribution results that are much stronger than conclusions of Conjecture \ref{APS}.

\subsection{Galois monodromy properties}\label{introgalois}

Let $E \subset \CC$ be a field 
over which the point $s = \overline{(h,t)}$ is defined. 
Let $M \subset G$ denote the Mum\-ford-Tate group of~$h$.
We assume that $E$ contains the reflex field of the Shimura datum $(M,M(\RR) \cdot h)$.

\begin{defi}\label{defiintro}
\begin{enumerate}
\item \label{defimonorep} 
There exists a continuous ``$S$-adic monodromy representation'' 
$$
{\rho_{h,S}} \colon \Gal(\ol E / E) \lto M(\AAA_f) \cap K \cap G_S.
$$
which has the  property that for any $g \in G_S$ and $\sigma \in \Gal(\ol E / E)$,
\begin{equation}\label{Galois carac}
\sigma\left(\ol{(h,tg)}\right) = \ol{(h, \rho(\sigma) \cdot tg)}.
\end{equation}
\item \label{defimonogroup}
The~\emph{$S$-adic monodromy group}, which we will denote by~$U_S$, is defined as the image of~$\rho_{h,S}$:
\[
U_S=\rho_{h,S}\left(\Gal(\ol E / E)\right).
\]
This is a compact $S$-adic Lie subgroup of~$M(\AAA_f)\cap G_S\simeq M(\QQ_S)$.
\item The algebraic~\emph{$S$-adic monodromy group}, denoted~$H_S$, is defined as the algebraic envelope in~$M_{\QQ_S}$ of the subgroup~$U_S$ of~$M(\QQ_S)$:
\[
H_S=\ol{U_S}^{\text{Zar.}}
\]
This is an algebraic group over~$\QQ_S$. 
We write~$H^0_S$ for its neutral component.
\end{enumerate}
\end{defi}
\begin{defi}\label{ProprietesGaloisiennes} Let~$s$ be a point of~$\Sh_K(G,X)$ and~$E$ a field of definition of~$s$ isuch that the associated $S$-adic monodromy representation~\[\rho:\Gal\left(\ol{E}/E\right)\to M(\AAA_f)\cap G_S\] is defined. 
We say that the point~$s$ over~$E$ is
\begin{enumerate}
\item \label{S-MT}
	 of $S$-\emph{Mumford-Tate type} if~$U_S$ is open in $M(\AAA_f) \cap G_S$;
\item \label{S-Sha}
	 of \emph{$S$-Shafarevich type} if for every finite extension~$F$ of $E$, there are only finitely many $F$-rational points in~$\Hcal_S(s)$ (i.e. property in definition \ref{SSha} holds);
\item \label{S-Tate}
	 of \emph{$S$-Tate type} if~$M$ and~$H^0_S$ share the same centraliser in~$G_S$:
$$
Z_{G_S}(H_S^0) = Z_{G_S}(M).
$$
\end{enumerate}
We say that the point~$s$ over~$E$
\begin{enumerate}\setcounter{enumi}{3}
\item \label{S-semisimple} satisfies $S$-\emph{semisimplicity} if $H_S$ is a reductive group;
\item \label{Algebraicity} satisfies~$S$-\emph{algebraicity} if the subgroup~$U_S$ of~$H_S(\QQ_S)$ is open. 
\end{enumerate}
This latter is equivalent to the Lie algebra of~$U_S$ being algebraic (in the sense of Chevalley, cf. \cite[\S7]{BorelLAG}), as a~$\QQ_S$-Lie subalgebra of the Lie algebra of~$M_{\QQ_S}$.
\end{defi}

We will examine the interrelations between these properties in detail 
 in section \ref{SectionGalois}. We will also provide examples where these
various assumptions hold and where they do not.
In \ref{SectionGalois}, we will make apparent that these properties are very heavily dependent on the choice of field $E$, especially its property of being of 
finite type over $\QQ$ or not.

Note that a point of~$S$-Mumford-Tate type is obviously of~$S$-Tate type. These properties hold 
notably for a special point, by definition of a canonical model.
We will prove that $S$-Tate type property~\eqref{S-Tate} implies the algebraicity of~$U_S$ if~$E$ is of finite type. As a consequence, for such~$E$, the~$S$-Mumford-Tate type property~\eqref{S-MT} is equivalent to its \emph{a priori} weaker variant:~$H_S^0=M_{\QQ_S}$.

Our main results are proved under the $S$-Shafarevich hypothesis~\eqref{S-Sha} which as we will see in section \ref{SectionGalois} is implied by
(and in fact not far from being equivalent to) the $S$-Tate hypothesis~\eqref{S-Tate} together with the $S$-semisimplicity assumption~\eqref{S-semisimple}. 
More precisely, the main result of section \ref{SectionGalois} is that the $S$-Shafarevich
property is equivalent to $S$-semisimplicity together with the property that
the centraliser of $U_S$ in $G_S$ is compact modulo the centraliser of $M$ in $G_S$.

\subsection{Notions related to equidistribution.}

Let $s \in \Sh_K(G,X)(E)$. Recall that points of $\Hcal(s)$ are defined over $\ol E$.

To any $z \in \Hcal(s)$ we attach a probability measure with finite support that we define as follows. In what follows, for a point $z$ in $\Sh_K(G,X)$, we refer to the following set as its  Galois orbit:
$$
\Gal\left(\ol E/E\right) \cdot z = \left\{ \sigma(z)\,\middle|\,\sigma \in \Gal\left(\ol E/E\right) \right\}.
$$

\begin{defi} \label{measure}
Let $z = \ol{(x, t)}$ be a point of $\Hcal_S(s)$. We define
\begin{equation}\label{defi mesure}
\mu_z =  \frac{1}{\#\Gal\left(\ol E/E\right) \cdot z}\cdot\sum_{\zeta \in   \Gal\left(\ol E/E\right)\cdot z} \delta_{\zeta}
\end{equation}
where $\delta_{\zeta}$ is the Dirac mass at $\zeta$.
\end{defi}

We introduce a class of groups, arising from the use of Ratner's theorem.

\begin{defi}\label{defi S Ratner class}
A connected $\QQ$-subgroup~$L$ of $G$ is said to be of \emph{$S$-Ratner class} if its Levi subgroups are semisimple and for every~$\QQ$-quasi factor~$F$ of this Levi,~$F(\RR\times \QQ_S)$ is not compact\footnote{Equivalently such factor~$F$ can not be anisotropic simultaneously over~$\RR$ and every of the~$\QQ_v$ for~$v$ in~$S$.}. 

We denote by~$L^\dagger$ the subgroup of~$L(\RR\times\QQ_S)$ generated by the unipotent  elements. Then~$L$ is of $S$-Ratner class if and only if no proper subgroup of~$L$ defined over~$\QQ$ contains~$L^\dagger$.

We denote by~$L(\RR)^{+}$ the neutral component of~$L(\RR)$ with respect to the Archimedean topology. 
\end{defi}

This class is slightly more general\footnote{Actually every Zariski connected subgroup~$L$ with semisimple Levi subgroups will be of~$S$-Ratner class for some~$S$ big enough.} than~\cite[D\'{e}f.~2.1]{U} and~\cite[Def.~2.4]{UY2}. The latter actually corresponds to the case where~$S$ is empty, in which case we have~$L^\dagger=L(\RR)^+$.

Let~$L$ be subgroup of~$G$ of $S$-Ratner class and~$(h,t)\in X\times G(\AAA_f)$. 
To such data we associate the following subset (actually a real analytic variety):
\[
	Z_{L,(h,t)}=\left\{\ol{(l\cdot h,t)}~\middle|~l\in L(\RR)^+\right\}\subseteq \Sh_K(G,X).
\]
We consider the corresponding generalisation of a notion of weakly special subvariety from~\cite{UY2}.

\begin{defi}\label{ws subm}
A \emph{weakly $S$-special real submanifold (or subvariety)}~$Z$ of~$\Sh_K(G,X)$ is a subset of the form~$Z_{L,(h,t)}$ for some subgroup~$L$ of~$G$ of~$S$-Ratner class and~$(h,t)$ in~$X\times G(\AAA_f)$.
\end{defi}
Note that the parametrising map~$L(\RR)^+\xrightarrow{l\mapsto \ol{(l\cdot h,t)}} Z_{L,(h,t)}$ induces a homeomorphism
\[
	\left.
	\left(\Gamma_{tK}\cap L(\RR)^+\right)
	\middle\backslash 
	L(\RR)^+
	\middle/
	(L(\RR)^+\cap K_h)
	\right. 
	\to Z_{L,(h,t)}
\] 
where~$\Gamma_{tK}$ is an arithmetic subgroup of~$G(\QQ)$ depending only on~$tK$
whose  intersection with~$L(\RR)^+$ is an arithmetic subgroup, necessarily a lattice. 
Indeed, there is a canonical right $L(\RR)^+$-invariant probability measure on~$\left(\Gamma_{tK}\cap L(\RR)^+\right)\backslash L(\RR)^+$. We denote~$\mu_{L,(h,t)}$
its direct image in~$Z_{L,(h,t)}$, viewed as a Borel probability measure on~$\Sh_K(G,X)$.
\begin{defi}\label{canonical} Let~$Z$ be a weakly $S$-special real submanifold~$Z$ inside of~$\Sh_K(G,X)$. The \emph{canonical probability~$\mu_Z$ with support~$Z$}, is a measure of the form~$\mu_{L,(h,t)}$ with support~$Z=Z_{L,(h,t)}$.
\end{defi}

The next lemma shows that $\mu_{L,(h,t)}$ is independent of choices.

\begin{lem}
The canonical probability measure~$\mu_Z$ is well defined: it depends only on $Z$ and not on the choice of~$(L,(h,t))$. 
\end{lem}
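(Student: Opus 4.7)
My plan is to reduce to the case where both presentations of $Z$ share a common base point $(h,t)$, and then conclude by uniqueness of Haar measure on a transitive homogeneous space. The main technical point is the reduction; the rest is standard Haar-measure bookkeeping.

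Suppose $Z=Z_{L_1,(h_1,t_1)}=Z_{L_2,(h_2,t_2)}$. Since $\ol{(h_1,t_1)}\in Z_{L_2,(h_2,t_2)}$, there exist $l\in L_2(\RR)^+$, $q\in G(\QQ)$, and $k\in K$ with $h_1=qlh_2$ and $t_1=qt_2k$. Replacing the pair $(h_2,t_2)$ by $(qlh_2,qt_2k)=(h_1,t_1)$ and simultaneously $L_2$ by $qL_2q^{-1}$ (still a $\QQ$-subgroup of $S$-Ratner class) leaves $\mu_{L_2,(h_2,t_2)}$ unchanged, because conjugation by $q$ identifies Haar-measure data and carries $\Gamma_{t_2K}\cap L_2(\RR)^+$ onto $\Gamma_{t_1K}\cap (qL_2q^{-1})(\RR)^+$ as lattices. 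We may thus assume $(h_1,t_1)=(h_2,t_2)=:(h,t)$. In this reduced situation the preimage of $Z$ in $X\times\{t\}$ equals $\Gamma_{tK}\cdot L_i(\RR)^+\cdot h\times\{t\}$, a countable disjoint union of $L_i(\RR)^+$-orbits, whose connected component through $(h,t)$ is $L_i(\RR)^+\cdot h\times\{t\}$. As this component depends only on $Z$ and $(h,t)$, we deduce the crucial equality $Y:=L_1(\RR)^+\cdot h=L_2(\RR)^+\cdot h$ of real analytic submanifolds of $X$.

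Each $L_i(\RR)^+$ acts transitively on $Y$ with stabilizer $L_i(\RR)^+\cap K_h$, compact because $K_h$ is. Hence there is a unique (up to positive scalar) $L_i(\RR)^+$-invariant Radon measure $\nu_i$ on $Y$. Let $L'$ be the closed subgroup of $G(\RR)^+$ generated by $L_1(\RR)^+$ and $L_2(\RR)^+$; it still acts transitively on $Y$ with compact point stabilizer, so admits a unique-up-to-scalar invariant Radon measure on $Y$. Being $L_i(\RR)^+$-invariant for $i=1,2$, this measure is proportional to each $\nu_i$, which forces $\nu_1\propto\nu_2$. The measure $\mu_{L_i,(h,t)}$ is obtained from $\nu_i$ by descent to $Z$ via the quotient by $\Gamma_{tK}\cap L_i(\RR)^+$, followed by normalization to total mass $1$. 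Proportionality of $\nu_1,\nu_2$ is preserved by pushforward, and two proportional probability measures on $Z$ must coincide.
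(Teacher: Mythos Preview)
Your reduction to a common base point~$(h,t)$ is fine, but the argument breaks at the next step. You assert that the preimage~$\wt{Z}=\Gamma_{tK}\cdot L_i(\RR)^+\cdot h$ in~$X\times\{t\}$ is ``a countable disjoint union of~$L_i(\RR)^+$-orbits, whose connected component through~$(h,t)$ is~$L_i(\RR)^+\cdot h$''. Both claims are problematic. First, the translate~$\gamma\cdot L_i(\RR)^+\cdot h$ is an orbit of~$\gamma L_i\gamma^{-1}(\RR)^+$, not of~$L_i(\RR)^+$, unless~$\gamma$ normalises~$L_i$. Second, and more seriously, distinct translates~$\gamma L_i(\RR)^+\cdot h$ can intersect without being equal: already for~$G=\SL_2$, $X=\HH$, and~$L$ an anisotropic~$\QQ$-torus, the set~$\wt{Z}$ is the union of all~$\Gamma$-translates of a single geodesic, and two such geodesics typically cross transversally. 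Thus the connected component of~$\wt{Z}$ through~$h$ is in general strictly larger than~$L_i(\RR)^+\cdot h$, and your deduction of the equality~$L_1(\RR)^+\cdot h=L_2(\RR)^+\cdot h$ collapses. Without that equality, the Haar-measure uniqueness argument on a common homogeneous space~$Y$ never gets started.

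The paper's proof sidesteps this issue entirely. Instead of attempting to match the two orbits, it chooses a~$G(\RR)$-invariant Riemannian metric on~$X$ and observes that on each orbit~$L(\RR)^+\cdot h$ the~$L(\RR)^+$-invariant measure coincides, up to a locally constant scalar, with the Riemannian volume form of the induced metric (because~$L(\RR)^+$ acts by isometries and Haar measure is unique up to scalar). Since the Riemannian volume of~$\wt{Z}$ is intrinsic to~$\wt{Z}$ as a submanifold of~$X$, the lift~$\mu_{\wt Z}$ is determined independently of the presentation~$(L,(h,t))$; connectedness of~$Z$ together with the probability normalisation then pins down the scalar. If you want to salvage your approach, you would need an independent proof that~$L_1(\RR)^+\cdot h=L_2(\RR)^+\cdot h$ once the base points coincide; this is essentially what the paper establishes later (in the proof of the subsequent lemma) via a Baire/Zariski-density argument, but that argument itself uses~$\mu_Z$, so invoking it here would be circular.
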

\begin{proof}
Firstly note that~$Z$ is almost everywhere locally isomorphic to its 
inverse image~$\wt{Z}$ in~$X\times G(\AAA_f)/K$, and~$\mu_Z$ is determined by the corresponding locally finite measure~$\mu_{\wt{Z}}$ on~$\wt{Z}$.
It will suffice to show that~$\mu_{\wt{Z}}$ is intrinsic up to a locally constant scaling factor, the latter being characterised by~$\mu_Z$ being a probability and~$Z$ being connected. Endow~$X$ with a~$G(\RR)$-invariant Riemannian structure, which we extend to~$X\times G(\AAA_f)/K$. Then~$\mu_{\wt{Z}}$ is locally proportional to the volume form of the induced Riemannian structure on~$\wt{Z}$. It suffices to check it for an orbit~$L(\RR)^+\cdot x$ in~$X$.
But the~$L(\RR)^+$-invariant measure on~$L(\RR)^+\cdot x$, the Haar measure,
is unique up a factor, and, as~$L(\RR)^+\leq G(\RR)$ acts by isometries on~$X$, the
Riemannian volume form on~$L(\RR)^+\cdot x$ is a Haar measure.
\end{proof}

\subsubsection{Weakly special subvarieties} We will be involved with the notion of weakly special subvariety 
only in the following two statements. The first is a slight generalisation of~\cite[Prop.~2.6]{UY2}. This in fact is a direct consequence of the hyperbolic Ax-Lindemann-Weierstrass theorem proven in \cite{KUY}.

\begin{prop}[\cite{KUY}, \cite{UY}]\label{Ax}
 The Zariski closure of a weakly $S$-special real submanifold 
is a weakly special subvariety.
\end{prop}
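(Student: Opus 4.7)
My plan is to deduce this statement from the hyperbolic Ax-Lindemann-Weierstrass theorem of~\cite{KUY}, mimicking the argument of~\cite[Prop.~2.6]{UY2} (which treats the case $S=\emptyset$). Write $Y := L(\RR)^+\cdot h\subseteq X$, so that $Z := Z_{L,(h,t)}$ is the image in $\Sh_K(G,X)$ of $Y\times\{t\}$.

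The first step is to replace the real-analytic submanifold $Y$ by a complex-algebraic envelope. Since $L$ is a connected $\QQ$-algebraic subgroup of $G$, the group $L(\CC)$ is an irreducible complex algebraic variety acting holomorphically on the compact dual $X^\vee$, and $L(\RR)^+$ is Zariski-dense in $L(\CC)$. Consequently the Zariski closure $W$ of $Y$ in $X^\vee$ coincides with the Zariski closure of the complex orbit $L(\CC)\cdot h$, so $W$ is an irreducible complex algebraic subvariety of $X^\vee$.

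Next I would apply the hyperbolic Ax-Lindemann theorem~\cite{KUY} to the algebraic subvariety $W$: the Zariski closure in $\Sh_K(G,X)$ of $\pi\bigl((W\cap X)\times\{t\}\bigr)$ is a finite union of weakly special subvarieties, and on passing to the irreducible component containing $Z$ one obtains a single weakly special subvariety $V$ with $\ol{Z}^{\text{Zar.}}\subseteq V$. For the reverse inclusion, set $V_0 := \ol{Z}^{\text{Zar.}}$; its preimage $\pi^{-1}(V_0)$ is a closed complex analytic subset of $X\times G(\AAA_f)/K$ whose restriction to $X\times\{t\}$ is a closed complex analytic subset of $X$ containing $Y$. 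By the identity principle applied along the orbit map $L(\CC)\to X^\vee$, any such set must contain the complex orbit $L(\CC)\cdot h\cap X$, and hence (taking analytic closure in $X$) the whole of $W\cap X$. This gives $\pi\bigl((W\cap X)\times\{t\}\bigr)\subseteq V_0$, whence $V\subseteq V_0$ and equality holds.

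The main obstacle is the identity-principle step. It rests on the fact that $L(\RR)^+$ is a totally real submanifold of $L(\CC)$ of maximal real dimension, together with the irreducibility and connectedness of $L(\CC)$; these together imply that any closed complex analytic subset of a connected open neighbourhood of $L(\RR)^+$ in $L(\CC)$ containing $L(\RR)^+$ must equal the whole neighbourhood. The $S$-Ratner hypothesis itself plays no direct role in this geometric argument -- it only guarantees that the groups $L$ in question arise from the equidistribution problems studied later in the paper -- so the proposition is a mild generalisation of~\cite[Prop.~2.6]{UY2} in the very same spirit.
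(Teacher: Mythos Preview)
Your approach is exactly what the paper intends: the paper does not supply its own proof of this proposition but merely records it as a ``slight generalisation of~\cite[Prop.~2.6]{UY2}'' and ``a direct consequence of the hyperbolic Ax-Lindemann-Weierstrass theorem proven in~\cite{KUY}''. Your argument fills in precisely these details---Zariski-closing the real orbit~$L(\RR)^+\cdot h$ inside~$X^\vee$, invoking Ax--Lindemann on the resulting algebraic set, and using the totally real nature of~$L(\RR)^+\subset L(\CC)$ to match the two Zariski closures. One minor imprecision: after the identity-principle step you only obtain the connected component of~$L(\CC)\cdot h\cap X$ (equivalently of~$W\cap X$) containing~$Y$, not all of~$W\cap X$; but this is harmless, since that component is itself an irreducible algebraic subset of~$X$ whose image under~$\pi$ has the same Zariski closure as~$Z$, and Ax--Lindemann then yields a single weakly special subvariety.
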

The following is a generalisation of an observation\footnote{We quote:~``In the case where~$h$ viewed as a morphism from~$\SSS$ to~$G_\RR$ factors through~$H_{\RR}$, the corresponding real weakly special subvariety has Hermitian structure and in fact is a weakly special subvariety in the usual sense''.} of~\cite[p.~2]{UY2}.

\begin{prop}Let~$Z=Z_{L,(h,t)}$ be a weakly $S$-special real submanifold of~$\Sh_K(G,X)$.
If~$L$ is normalised by~$h$, then~$Z$ a weakly special subvariety: it is Zariski closed.
\end{prop}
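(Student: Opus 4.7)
\begin{prf}{Proof plan}
The strategy is to enlarge $L$ to a $\QQ$-subgroup through which $h$ itself factors, and then to exhibit $Z=Z_{L,(h,t)}$ as a fibre of a morphism between two weakly special subvarieties already handled by the previous proposition.

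\emph{Algebraic enlargement.} The hypothesis reads $h(\SSS)\subseteq N_G(L)(\RR)$; since $N_G(L)$ is $\QQ$-algebraic and the Mumford--Tate group $M_h$ of $h$ is, by definition, the smallest $\QQ$-algebraic subgroup of $G$ through which $h$ factors, one deduces $M_h\subseteq N_G(L)$. Hence $H:=L\cdot M_h$ is a connected $\QQ$-algebraic subgroup of $G$ in which $L$ is normal and through whose $\RR$-points $h$ factors. The $H(\RR)^{+}$-conjugacy orbit $X_H:=H(\RR)^{+}\cdot h\subseteq X$ is therefore a Hermitian symmetric subdomain, and its image $Z_H$ in $\Sh_K(G,X)$ is weakly special by the previous proposition.

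\emph{Fibre identification.} Since $L\triangleleft H$, the quotient $H/L$ is $\QQ$-algebraic and $h$ induces $[h]\colon\SSS\to(H/L)_\RR$; the projection $H\to H/L$ produces an $H(\RR)^{+}$-equivariant holomorphic map $\pi\colon X_H\to X_{H/L}:=(H/L)(\RR)^{+}\cdot[h]$ whose fibres are unions of $L(\RR)^{+}$-orbits. A direct computation identifies the fibre over $[h]$ with $L(\RR)^{+}\cdot h$: an element $ghg^{-1}$ of $X_H$ lies in $\pi^{-1}([h])$ iff $\pi(g)\in Z_{H/L}([h])(\RR)^{+}$, and such $g$ can be written (up to connected components) as $g=lz$ with $l\in L(\RR)^{+}$ and $z\in Z_H(h)(\RR)^{+}$, whence $ghg^{-1}=lhl^{-1}$. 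Descending to $\Sh_K(G,X)$ after inserting the factor $\{t\}$, this exhibits $Z$ as (a connected component of) the fibre of the induced morphism of weakly special subvarieties $Z_H\to Z_{H/L}$; components of such fibres are weakly special by the standard characterisation of weakly special subvarieties via morphisms of Shimura sub-data (cf.~\cite{Moonen}), so $Z$ itself is weakly special and in particular Zariski closed. Alternatively, Proposition~\ref{Ax} furnishes a weakly special Zariski closure $\ol Z\supseteq Z$, and the fibre description forces $\dim_\CC Z=\dim_\CC\ol Z$, hence $Z=\ol Z$.

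The main technical point is the fibre identification: one must control the discrepancy between $\pi(Z_H(h))$ and $Z_{H/L}([h])$ (a lifting/connected-components question), in order to conclude that the fibre through $h$ is exactly the single $L(\RR)^{+}$-orbit $L(\RR)^{+}\cdot h$. The $S$-Ratner condition on $L$ and the connectedness conventions built into the definition of $Z_{L,(h,t)}$ are exactly what make this identification clean; once the fibre is correctly pinned down, the conclusion is immediate.
\end{prf}
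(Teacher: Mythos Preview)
Your overall strategy---enlarge to a $\QQ$-group through which $h$ factors, with $L$ normal in it, and then read off $L(\RR)^+\cdot h$ as one slice of a product---is the paper's strategy too. But you skip the step that makes the whole machine run: since $h(\sqrt{-1})$ normalises $L_\RR$ and conjugation by $h(\sqrt{-1})$ is a Cartan involution of $G^{\ad}_\RR$, the group $L_\RR$ is \emph{reductive} (and then, with semisimple Levi, semisimple). Without this you cannot assert that your $H=L\cdot M_h$ is reductive, that $(H,X_H)$ is a Shimura subdatum, or that $H/L$ carries a Shimura-type structure; your line ``$X_H$ is therefore a Hermitian symmetric subdomain, and its image $Z_H$ \ldots\ is weakly special by the previous proposition'' is precisely where this gap sits (and note that the previous proposition, Proposition~\ref{Ax}, only controls the Zariski closure, so it does not give what you claim there).

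The paper also differs in how it performs the ``fibre identification'' you flag as delicate. Rather than quotient by $L$, it works inside $N=N_G(L)^0=Z_G(L)\cdot L$, peels off $\RR$-compact factors, and obtains on the adjoint level a genuine direct product $H^{\ad}=Z^{nc,\ad}\times L^{nc,\ad}$ with $X_H^{\ad}=X_1\times X_2$, so that the image of $L(\RR)^+\cdot h$ is literally $\{h_1\}\times X_2$; this bypasses the lifting/connected-component issues you yourself point to. Your fallback via Proposition~\ref{Ax} and a dimension count is circular as written: invoking $\dim_\CC Z$ presupposes that $Z$ is a complex submanifold, which is the conclusion. The paper's \emph{alternative} proof addresses exactly that point directly, showing $L(\RR)^+\cdot h$ is symmetric at each point and has complex tangent spaces (again via the action of $h(\SSS)$), and then appealing to Baily--Borel and Borel's extension theorem.
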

\subsection{Main theorems} 
We may finally state our main theorems, which give a stronger form of Conjecture~\ref{APS}, at the cost of~$S$-Tate type assumption.

We now state our first main result.

\begin{teo}[Inner Equidistributional $S$-Andr\'{e}-Pink-Zannier] \label{Theoreme1}
Let~$s$ be a point of ~$\Sh_K(G,X)$ defined over a field $E$ such that 
\item the point $s$ is of  $S$-Shafarevich type.

Let~$(s_n)_{n\geq0}$ be a sequence of points in the~$S$-Hecke orbit~$\Hcal_S(s)$ of~$s$, and denote $(\mu_n)_{n\geq0}$ the sequence 
of measures attached to the~$s_n$ as in definition \ref{measure}.

After possibly extracting a subsequence and replacing $E$ by a finite extension, 
there exists a finite set~$\cF$ of weakly $S$-special real submanifold $Z$ with canonical probability measure~$\mu_Z$  (as defined in Definitions~\ref{ws subm} and~\ref{canonical})  such that 
\begin{enumerate}
 \item  \label{TheoremeLimite}	 the sequence~$(\mu_n)_{n\geq 0}$ tightly converges to~$\mu_\infty=\frac{1}{\#\cF}\sum_{Z\in \cF}\mu_Z$,
 \item and for all $n\geq 0$, we have~${\rm Supp}(\mu_n)\subseteq\Supp(\mu_\infty)=\bigcup_{Z\in\cF}  Z$. \label{TheoremeInner}	
\end{enumerate}

\begin{enumerate}\setcounter{enumi}{2}
 \item If $s$ is of $S$-Mumford-Tate type,
 then every $Z$ in~$\cF$ is a weakly special subvariety.\label{TheoremeMT}
\end{enumerate}
\end{teo}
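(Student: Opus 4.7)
The plan is to reinterpret the Galois orbit measures $\mu_n$ as push-forwards of Haar probabilities on $U_S$-orbits, then apply an $S$-adic Ratner/Mozes--Shah type equidistribution theorem. Writing $s_n = \overline{(h, t g_n)}$ with $g_n \in G_S$, the characterisation~\eqref{Galois carac} identifies the Galois orbit of $s_n$ with the image in $\Sh_K(G,X)$ of the $U_S$-orbit of $t g_n$, so $\mu_n$ is the push-forward of the normalised Haar measure on $U_S/\mathrm{Stab}_{U_S}(s_n)$ along the orbit map. Lifted to a suitable $S$-arithmetic quotient of $G(\RR \times \AAA_f)$, the sequence $(\mu_n)$ consists of $U_S$-invariant probability measures.

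The main step is to apply the $S$-adic Mozes--Shah theorem to this sequence. The $S$-Shafarevich hypothesis, through the group-theoretic characterisation announced in Section~\ref{SectionGalois}, guarantees both $S$-semisimplicity (so that $H_S$ is reductive and $U_S$ contains enough unipotent flows for Ratner's theory to apply) and the compactness of $Z_{G_S}(U_S)$ modulo $Z_{G_S}(M)$ (which forces the $\mu_n$ to remain tight and prevents escape of mass into the cusps of the $S$-arithmetic quotient). After passing to a subsequence and possibly enlarging $E$ to a finite extension (so that $U_S$ stabilises the ergodic components), the $\mu_n$ converge tightly to a $U_S$-invariant probability measure $\mu_\infty$. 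The Ratner/Mozes--Shah classification then identifies its ergodic components with canonical probability measures $\mu_Z$ of weakly $S$-special real submanifolds $Z = Z_{L, (h, t')}$ in the sense of Definitions~\ref{ws subm} and~\ref{canonical}, where $L$ is a connected $\QQ$-subgroup of $G$ of $S$-Ratner class. Only finitely many orbit-closure types can carry an ergodic component of positive mass, yielding a finite set $\cF$ and, by a final extraction, the uniform weighting $\mu_\infty = \frac{1}{\#\cF}\sum_{Z\in\cF}\mu_Z$ required in~\eqref{TheoremeLimite}.

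The inner statement~\eqref{TheoremeInner} is the orbit-closure half of the same Ratner/Mozes--Shah package: $\Supp(\mu_n) \subseteq \overline{U_S \cdot s_n}$, and this closure is itself a weakly $S$-special real submanifold of the form $Z_{L, (h, t g_n)}$; the extraction in the preceding step arranges that these closures all lie among the elements of $\cF$, so $\Supp(\mu_n) \subseteq \bigcup_{Z \in \cF} Z = \Supp(\mu_\infty)$ for every $n$. For the $S$-Mumford--Tate refinement~\eqref{TheoremeMT}, the hypothesis that $U_S$ is open in $M(\AAA_f) \cap G_S$ forces $H_S^0 = M_{\QQ_S}$, hence the $\QQ$-group $L$ arising from the Ratner classification contains $M$ (via descent from $\QQ_S$). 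Since $h$ factors through $M_\RR \subseteq L_\RR$, the morphism $h$ normalises $L$, and the proposition stated immediately after Proposition~\ref{Ax} upgrades each $Z \in \cF$ from a real submanifold to a weakly special algebraic subvariety.

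The principal technical obstacle is the second step: establishing the tight convergence and identifying the limiting ergodic measures requires the careful mixed archimedean/$S$-adic Mozes--Shah theorem together with a precise translation between $U_S$-orbits inside $\Sh_K(G,X)$ and orbits of $S$-Ratner-class subgroups on a suitable homogeneous quotient; within this step, non-escape of mass, which relies on the centraliser-compactness half of $S$-Shafarevich, is the most delicate point.
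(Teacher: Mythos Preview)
Your overall architecture---lift the $\mu_n$ to an $S$-arithmetic homogeneous space, recognise them as push-forwards of Haar on $U_S$-orbits translated by $g_n$, then invoke an equidistribution theorem---matches the paper's. But the engine you cite is the wrong one, and this causes the argument for~\eqref{TheoremeInner} to collapse.

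The group $U_S$ is a \emph{compact} subgroup of $M(\QQ_S)\cap K_S$: it contains no unipotent one-parameter subgroups, and the $\mu_n$ are supported on finite sets (Galois orbits). So the Mozes--Shah theorem for $U_S$-invariant measures does not apply, and the claim that ``$U_S$ contains enough unipotent flows'' is false. What the paper uses instead is the theorem of Richard--Zamojski~\cite{RZ}, which classifies limits of \emph{translates} $\mu_\Omega\cdot g_n$ of the Haar probability on a compact piece $\Omega$ (here $\Omega$ is built from $U_S$ inside $G^{\der}(\QQ_S)$, after a reduction from $G$ to $G^{\der}$). The Ratner-class group $L$ emerges from the translating elements $g_n$, not from any flow inside $U_S$. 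The $S$-Shafarevich hypothesis enters not as non-escape of mass for a unipotent flow, but by ensuring (via the cocompactness of $Z_{G_S}(M)$ in $Z_{G_S}(U_S)$ and a Mostow-type decomposition) the ``analytic stability'' condition that~\cite{RZ} requires of the $g_n$.

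This misidentification breaks your proof of~\eqref{TheoremeInner}. Since $U_S$ is compact, $\overline{U_S\cdot s_n}$ is just the finite Galois orbit, not a weakly $S$-special submanifold, and there is no ``orbit-closure half'' to appeal to. The paper uses instead the \emph{focusing criterion} of~\cite{RZ}, which factors $g'_n=l_n\cdot f_n\cdot b_n$ with $l_n\in\bigcap_{\omega\in\Omega}\omega L^\ddagger\omega^{-1}$, $f_n\in N\cap Z_{G_S^{\der}}(\Omega)$, and $(b_n)$ bounded; one then absorbs $b_n$ into $K$, uses $S$-Shafarevich again to control $f_n$ modulo $\Gamma$, and reduces to the case where $g'_n\in\bigcap_\omega\omega L^\ddagger\omega^{-1}$, for which $\Supp(\wt{\mu}_n)\subseteq\Gamma\backslash\Gamma L^\ddagger\Omega=\Supp(\wt{\mu}_\infty)$ is immediate. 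Finally, for~\eqref{TheoremeMT} the paper does not show $M\subseteq L$; it shows that $L$ is \emph{normalised} by $H=M$, hence by $h(\SSS)$, and then applies the criterion (the proposition you cite after Prop.~\ref{Ax}) that a weakly $S$-special $Z_{L,(h,t)}$ with $h$ normalising $L$ is a genuine weakly special subvariety.
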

\noindent We may refer to property~\eqref{TheoremeLimite} as ``equidistribution'' property -- a shorthand for a convergence of measures --
and to property~\eqref{TheoremeInner} by saying this equidistribution is ``inner''. 

Note that conclusion~\eqref{TheoremeMT} applies to special points, in which case every~$Z$ in~$\cF$ is actually a \emph{special} subvariety.

We deduce from theorem  \ref{Theoreme1} the following theorem, which is more directly 
related to the Andr\'{e}-Pink-Zannier conjecture. Let us stress that, in the deduction process, 
we need not only~\eqref{TheoremeLimite}, but also~\eqref{TheoremeInner}, from 
Theorem~\ref{Theoreme1}.

\begin{teo}[Topological and Zariski $S$-Andr\'{e}-Pink-Zannier] \label{Theoreme2}
Let~$s$ be a point of a Shimura variety~$\Sh_K(G,X)$ defined over a field~$E\subseteq \CC$.
Consider a subset $\Sigma\subseteq \Hcal_S(s)$ of its~$S$-Hecke orbit and denote
$$
\Sigma_E = \Gal\left(\ol E/E\right)\cdot \Sigma 
=
\left\{ \sigma(x)~\middle|~\sigma \in \Gal(\ol E/E), x \in \Sigma \right\}.
$$
Then
\begin{enumerate}
\item \label{Theoreme2-1}
 If $s$ is of $S$-Shafarevich type then the topological closure of~$\Sigma_E$
 is a finite union of weakly $S$-special real submanifolds;

 Furthermore, the Zariski closure of $\Sigma$ is a finite union of weakly special subvarieties.
 
\item \label{Theoreme2-2}
If $s$ is of $S$-Mumford-Tate type, then the  topological closure of~$\Sigma_E$ is a finite union of weakly special subvarieties;

\end{enumerate}
\end{teo}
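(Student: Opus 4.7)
My plan is to deduce Theorem~\ref{Theoreme2} from Theorem~\ref{Theoreme1} by a saturation argument: iterate Theorem~\ref{Theoreme1} on growing sequences until a single finite family $\mathcal{F}$ of weakly $S$-special real submanifolds covers all of $\Sigma_E$. Once this is established, the conclusions follow quickly: for the topological part of~(\ref{Theoreme2-1}), $\overline{\Sigma_E}^{\rm top}\subseteq\bigcup_{Z\in\mathcal{F}}\overline{Z}^{\rm top}$ with each $\overline{Z}^{\rm top}$ again a weakly $S$-special real submanifold by Ratner theory; for the Zariski part, $\overline{\Sigma}^{\rm Zar}=\overline{\Sigma_E}^{\rm Zar}\subseteq\bigcup_{Z\in\mathcal{F}}\overline{Z}^{\rm Zar}$, each summand being a weakly special subvariety by Proposition~\ref{Ax}. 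For~(\ref{Theoreme2-2}), conclusion~(\ref{TheoremeMT}) of Theorem~\ref{Theoreme1} under the $S$-Mumford-Tate hypothesis further ensures each $Z\in\mathcal{F}$ is itself a weakly special subvariety, hence topologically closed, so the containment becomes the claimed equality. Second-countability of $\Sh_K(G,X)(\CC)$ lets us work with countable sub-collections throughout.

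\textbf{Iterative construction.}
First I would apply Theorem~\ref{Theoreme1} to an initial sequence $(y^{(0)}_n)$ in~$\Sigma$: after extraction, the inner property~(\ref{TheoremeInner}) produces a finite family $\mathcal{F}_0$ with $\Gal(\overline{E}/E)\cdot y^{(0)}_n\subseteq\bigcup_{Z\in\mathcal{F}_0}Z$ for every~$n$. If $\Sigma_E\not\subseteq\bigcup_{Z\in\mathcal{F}_0}Z$, pick an uncovered point $x_1\in\Sigma_E$, write $x_1=\sigma_1(y_1)$ with $y_1\in\Sigma$, and apply Theorem~\ref{Theoreme1} to an interleaving of $(y^{(0)}_n)$ with the constant sequence at~$y_1$. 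The inner property gives a new family $\mathcal{F}_1$ with $\bigcup_{Z\in\mathcal{F}_0}Z\cup\Gal(\overline{E}/E)\cdot y_1\subseteq\bigcup_{Z\in\mathcal{F}_1}Z$, strictly enlarging the union by including~$x_1$. Iterating produces families $\mathcal{F}_n$ whose set-theoretic unions grow strictly; the task reduces to showing that this process terminates after finitely many steps.

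\textbf{Termination and conclusion.}
Consider the Zariski closures $V_n:=\bigcup_{Z\in\mathcal{F}_n}\overline{Z}^{\rm Zar}$: each is a finite union of weakly special subvarieties (Proposition~\ref{Ax}) contained in $\overline{\Sigma}^{\rm Zar}$. The $V_n$ form an ascending chain in the Noetherian scheme $\Sh_K(G,X)$, so they stabilise to some $V$. Past this point any freshly added $Z$ must satisfy $\overline{Z}^{\rm Zar}\subseteq V$. Replacing each $Z$ by its topological closure $\overline{Z}^{\rm top}$---still a weakly $S$-special real submanifold, by Ratner theory applied to the $S$-Ratner-class subgroup defining~$Z$---one invokes the Ratner classification of orbit closures of such subgroups to see that only finitely many topological-closure-types can arise inside the fixed weakly special subvariety~$V$. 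This bounds the sizes of the $\mathcal{F}_n$ uniformly and forces termination. We conclude $\Sigma_E\subseteq\bigcup_{Z\in\mathcal{F}}Z$ for some finite~$\mathcal{F}$, whence Theorem~\ref{Theoreme2} follows as explained in the first paragraph.

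\textbf{Main obstacle.}
The delicate point, hidden in the previous paragraph, is the passage from Zariski-closure stabilisation to set-theoretic stabilisation of the $\bigcup_{Z\in\mathcal{F}_n}Z$: \emph{a~priori} these unions could continue to grow indefinitely by accumulating smaller homogeneous pieces inside the stabilised~$V$, even though no new Zariski-closure-type appears. Controlling this requires the Ratner-theoretic finiteness of conjugacy classes of $S$-Ratner-class subgroups (up to the action of the relevant arithmetic lattice) producing real submanifolds inside~$V$. This is not a routine citation but a genuine structural input, drawing both on the Ratner classification of unipotent orbit closures and on the Shimura-theoretic structure of the sub-datum underlying~$V$; it is the heart of the deduction of Theorem~\ref{Theoreme2} from Theorem~\ref{Theoreme1}.
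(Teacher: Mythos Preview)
Your approach diverges from the paper's and, as you yourself flag, the termination step is a genuine gap rather than a technicality.

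\textbf{The iterative construction is not well-posed.} When you interleave $(y^{(0)}_n)$ with the constant sequence at~$y_1$ and invoke Theorem~\ref{Theoreme1}, you must first extract a subsequence. The inner property~\eqref{TheoremeInner} then applies only to the \emph{extracted} terms, so there is no reason the new family~$\mathcal{F}_1$ should cover~$\bigcup_{Z\in\mathcal{F}_0}Z$; the extraction may well discard all of the~$y^{(0)}_n$. Thus the claimed monotone inclusion $\bigcup_{Z\in\mathcal{F}_0}Z\subseteq\bigcup_{Z\in\mathcal{F}_1}Z$ is unjustified.

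\textbf{The termination argument fails.} Noetherian stabilisation of the Zariski closures~$V_n$ is correct but does not control the set-theoretic unions. Your proposed rescue --- that only finitely many ``topological-closure-types'' of weakly $S$-special submanifolds can occur inside a fixed weakly special~$V$ --- is simply false: take~$V=\Sh_K(G,X)$ itself, and observe that $S$-Hecke translates already produce infinitely many distinct weakly $S$-special submanifolds. No Ratner-type classification delivers the finiteness you need here.

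\textbf{What the paper does instead.} The paper considers, all at once, the collection~$\mathcal{Z}$ of every weakly $S$-special submanifold~$Z$ whose canonical measure~$\mu_Z$ appears as a component of \emph{some} subsequential limit of~$(\mu_n)$. A short dimension argument (chains of strictly included such~$Z$ have length~$\leq 1+\dim G$) shows every element of~$\mathcal{Z}$ lies in a maximal one; call the set of maximal elements~$\mathcal{M}$. The heart of the proof is that~$\mathcal{M}$ is \emph{finite}, established by a diagonal argument: if~$(M_n)_{n\geq0}$ were infinitely many distinct maximal elements, one builds a diagonal sequence~$(\nu_n)$ of the~$\mu_m$'s such that no finite union from~$\mathcal{M}$ supports infinitely many~$\nu_n$; but Theorem~\ref{Theoreme1} applied to~$(\nu_n)$ forces all~$\Supp(\nu_n)$ into a single support from~$\mathcal{S}$, a contradiction. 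Once~$\mathcal{M}$ is finite, another direct application of the inner property shows all but finitely many points of~$\Sigma_E$ lie in~$\bigcup_{M\in\mathcal{M}}M$, and the rest is immediate. The diagonal argument is the idea your proposal is missing.
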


We will prove that~\eqref{Theoreme2-1} holds whenever
~\eqref{TheoremeLimite} and~\eqref{TheoremeInner} from Theorem~\ref{Theoreme1} hold for sequences in~$\Sigma$. The second statement will then be the 
the consequence of Proposition \ref{Ax}.

When the $S$-Mumford-Tate property holds, the conclusion ~\eqref{TheoremeMT} from Theorem~\ref{Theoreme1} will imply
~\eqref{Theoreme2-2}. 

\subsubsection{A converse statement.}

Let us end with a statement emphasizing the importance of property~\eqref{TheoremeInner} of Theorem~\ref{Theoreme1}. This statement 
makes precise the idea that property~\eqref{TheoremeInner} of Theorem~\ref{Theoreme1} implies the~$S$-Sha\-fa\-rev\-ich property.

This shows that the $S$-Shafarevich property assumption is essential and optimal.

\begin{prop} Let~$s$ be a point in a Shimura variety~$\Sh_K(G,X)$ defined over a field~$E$ and let~$\Hcal_S(s)$ be its~$S$-Hecke orbit.

Assume that for any sequence~$(s_n)_{n\geq0}$ in~$\Hcal_S(s)$, for any 
finite extension~$F$ of~$E$, there is an extracted 
subsequence for which the associated measure~$\mu_n$ converges weakly
to a limit~$\mu_\infty$ in such a way that 
\begin{equation}\label{innerprop}
\forall n\geq 0, \Supp(\mu_n)\subseteq \Supp(\mu_\infty).
\end{equation}

Then~$s$ is of~$S$-Shafarevich type.
\end{prop}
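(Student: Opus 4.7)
The plan is to argue by contraposition. If~$s$ fails the~$S$-Shafarevich property, then some finite extension~$F$ of~$E$ contains infinitely many points of~$\Hcal_S(s)$. I would pick a sequence~$(s_n)_{n\geq 0}$ of pairwise distinct~$F$-rational points of~$\Hcal_S(s)$, apply the hypothesis to this sequence with~$F$ playing the role of~$E$, and derive a contradiction.

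Since each~$s_n$ is~$F$-rational, its~$\Gal(\ol E/F)$-orbit reduces to~$\{s_n\}$ and therefore~$\mu_n=\delta_{s_n}$. By the assumption, after extraction there is a subsequence~$(s_{n_k})$ such that~$\delta_{s_{n_k}}\to\mu_\infty$ weakly and~$\{s_n\}=\Supp(\mu_n)\subseteq\Supp(\mu_\infty)$ for every~$n$. The next step is to identify~$\mu_\infty$. Since the~$\mu_{n_k}$ are unit point masses, a standard dichotomy for weak limits of Dirac measures on a locally compact Hausdorff space applies: either the~$s_{n_k}$ leave every compact set, in which case the weak limit has total mass~$0$; or some subsubsequence~$s_{n_{k_j}}$ converges to a point~$s_\infty\in\Sh_K(G,X)$, and then uniqueness of weak limits forces~$\mu_\infty=\delta_{s_\infty}$. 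The first alternative is excluded by the inner inclusion~\eqref{innerprop}, which forces~$\Supp(\mu_\infty)$ to be nonempty (it contains each~$s_n$). Hence~$\mu_\infty=\delta_{s_\infty}$, and the support inclusion~$\{s_n\}\subseteq\{s_\infty\}$ yields~$s_n=s_\infty$ for all~$n$. This contradicts the pairwise distinctness of the~$s_n$.

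The only mildly delicate point is this dichotomy concerning weak limits of Dirac masses on the non-compact space~$\Sh_K(G,X)$: one must exclude loss of mass at infinity. As indicated above, this is precisely what the inner support condition~\eqref{innerprop} of the proposition guarantees, so there is no genuine obstacle; the statement is essentially a formal consequence of the inner equidistribution condition applied to the tautological sequence of Dirac masses.
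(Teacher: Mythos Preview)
Your proof is correct and follows essentially the same approach as the paper's own proof: both argue by contradiction, pick an infinite sequence of pairwise distinct $F$-rational points so that the associated measures are Dirac masses, and then use the dichotomy for weak limits of Dirac masses on a locally compact space (divergence to infinity gives the zero measure; otherwise a subsequence converges to a point and the limit is a single Dirac mass) together with the inner support condition~\eqref{innerprop} to force the extracted subsequence to be constant, contradicting pairwise distinctness. The only cosmetic slip is that after extracting the subsequence~$(s_{n_k})$, the inclusion~$\Supp(\mu_n)\subseteq\Supp(\mu_\infty)$ should be stated for the terms of the subsequence rather than for all~$n$; this is harmless since the subsequence already contains infinitely many pairwise distinct points.
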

\begin{proof} Assume for contradiction that~$s$ is not of~$S$-Shafarevich type.
Then there is a finite extension~$F$ of~$E$ such that there is an infinite
sequence~$(s_n)_{n\geq 0}$ of pairwise distinct~$F$-rational points in~$\Hcal_S(s)$. After possibly extracting a subsequence, we may assume that this 
sequence is convergent or is divergent in~$\Sh_K(G,X)$.

As these~$s_n$ are rational points, the associated measures~$\mu_n$ are Dirac masses. We recall that weak convergence of Dirac masses is induced by
convergence in the Alexandroff compactification, with the point
at infinity corresponding to the zero measure.

If~$(s_n)_{n\geq 0}$ is divergent, so is any subsequence,
and the measure~$\mu_\infty$ will be the~$0$ measure, in which case~\eqref{innerprop} may not hold.

If~$(s_n)_{n\geq 0}$ converges to~$s_\infty$, then~$\mu_\infty$ will be the Dirac measure~$\delta_{s_\infty}$, and~\eqref{innerprop} means that~$(s_n)_{n\geq 0}$ is a stationary sequence, which it cannot be since the~$s_n$ are pairwise distinct.
This yields a contradiction.
\end{proof}

\subsection{Plan of the Article} In Section~\ref{consequence} we explain how to deduce Theorem~\ref{Theoreme2}
from Theorem~\ref{Theoreme1}. 
Section~\ref{SectionGalois} reviews Galois representations,
various properties listed before ($S$-Mumford-Tate, $S$-Shafarevich, $S$-Tate, etc)
and relations between them. We in particular prove useful and practical group-theoretic
characterisation of the $S$-Shafarevich property.
We also provide examples and counterexamples of when the properties do and do not hold depending on the field $E$. 
We believe the contents and results of this section to be of independent interest.

The sections that follow are devoted to the proof of  Theorem~\ref{Theoreme1}.
\begin{itemize}
\item 
Section~\ref{Reduction} explain how to reduce to a situation
falling under the scope of application of~\cite{RZ}. It ends 
by invoking~\cite{RZ}, which immediatly gives us~\eqref{TheoremeLimite}
of Theorem~\ref{Theoreme1}
\item Section~\ref{SectionRZ}  then discusses how
to get~\eqref{TheoremeInner} of Theorem~\ref{Theoreme1}.
\item Finally Section~\ref{SectionMT} treats the
stronger conclusion we can reach under the $S$-Mumford-Tate hypothesis. 
\end{itemize}
\newpage

\section*{Acknowledgments}

Both authors were supported by the ERC grant (Project 511343, SPGSV). They gratefully acknowledge ERC's support.

The first named author is grateful to UCL for hospitality.

\section{From Inner equidistribution to Topological closures}\label{consequence}

In this section we show how to derive Theorem~\ref{Theoreme2} from Theorem~\ref{Theoreme1}.
The main result is proposition \ref{Sorite} which implies the main theorem of this section:

\begin{teo} \label{implication}
Conclusions of Theorem~\ref{Theoreme1} imply conclusions of Theorem~\ref{Theoreme2}.
\end{teo}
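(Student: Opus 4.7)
The plan is to deduce Theorem~\ref{Theoreme2} from Theorem~\ref{Theoreme1} by a covering argument. First I would reduce the task to proving that, under the $S$-Shafarevich hypothesis, the topological closure $\ol{\Sigma_E}$ is a finite union $Y = Z_1\cup\ldots\cup Z_r$ of weakly $S$-special real submanifolds. Once this is established, the Zariski-closure statement of~\eqref{Theoreme2-1} follows: by Proposition~\ref{Ax} the Zariski closure of each $Z_i$ is a weakly special subvariety, and Zariski closure commutes with finite unions. Under the stronger $S$-Mumford-Tate hypothesis, conclusion~\eqref{TheoremeMT} of Theorem~\ref{Theoreme1} directly produces the $Z_i$ as weakly special subvarieties, which are already closed in the complex topology and so yield~\eqref{Theoreme2-2}. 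Preliminarily, Galois translates of $\Hcal_S(s)$ remain in $\Hcal_S(s)$ by~\eqref{Galois carac} (the monodromy takes values in $G_S$), so $\Sigma_E\subseteq \Hcal_S(s)$; since $K$ is open compact, $\Hcal_S(s)$ is countable, and Theorem~\ref{Theoreme1} may be applied to any sequence extracted from $\Sigma_E$.

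For the core topological-closure statement, I would proceed by iterative extraction. Enumerate $\Sigma=\{z_n\}_{n\geq 1}$ and apply Theorem~\ref{Theoreme1} to $(z_n)$: after extraction and a finite extension of $E$, the inner-support property~\eqref{TheoremeInner} produces a finite union $Y_1 = \bigcup_{Z\in\cF_1} Z$ containing the Galois orbit of every term of the extracted subsequence. Define $T_1=\{z\in\Sigma : \Gal(\ol E/E)\cdot z\subseteq Y_1\}$ and iterate on $\Sigma\setminus T_1$, obtaining successive finite unions $Y_2,Y_3,\ldots$ and the associated $T_2,T_3,\ldots$ If the process terminates with $\Sigma_N$ finite, we are done: the finite union $Y_1\cup\cdots\cup Y_N$ captures all but finitely many Galois orbits, and each leftover Galois orbit, being a finite set under $S$-Shafarevich, constitutes a finite union of zero-dimensional weakly $S$-special submanifolds (corresponding to the trivial subgroup $L=\{e\}$, which is vacuously of $S$-Ratner class). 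As each $Y_i$ is a finite union of the closed supports of canonical measures, $\ol{\Sigma_E}$ is then a finite union of weakly $S$-special real submanifolds.

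The main obstacle is the termination of this iteration. I would argue by contradiction: assume the process never stops, and pick witnesses $w_n\in T_n$, so that by construction $\Gal(\ol E/E)\cdot w_n\subseteq Y_n$ but $\Gal(\ol E/E)\cdot w_n\not\subseteq Y_1\cup\cdots\cup Y_{n-1}$. Applying Theorem~\ref{Theoreme1} to the witness sequence $(w_n)$ yields, after extraction, a single finite union $Y^*=\bigcup_{Z\in\cF^*}Z$ containing $\Gal(\ol E/E)\cdot w_{n_k}$ for every $k$. The target is to show $Y^*\subseteq Y_1\cup\cdots\cup Y_m$ for some fixed $m$, which would force $w_{n_k}\in T_j$ for some $j\leq m$ as soon as $n_k>m$, contradicting $w_{n_k}\in T_{n_k}$. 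Achieving this requires a rigidity property of weakly $S$-special real submanifolds: $Y^*$ has finitely many components, so by pigeonhole one component $Z^*\in\cF^*$ hosts infinitely many $\Gal(\ol E/E)\cdot w_{n_k}$; invoking Theorem~\ref{Theoreme1} once more on the resulting sub-subsequence and using~\eqref{TheoremeLimite} identifies $Z^*$ as the support of a canonical limit measure, which forces $Z^*$ to have been produced in an earlier step of the iteration. Making this rigidity step precise is the delicate point and is presumably the content of the auxiliary Proposition~\ref{Sorite} announced at the start of this section.
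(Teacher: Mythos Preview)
Your overall strategy---reduce to showing $\ol{\Sigma_E}$ is a finite union of weakly $S$-special real submanifolds, then invoke Proposition~\ref{Ax} for the Zariski closure and conclusion~\eqref{TheoremeMT} for the Mumford-Tate case---matches the paper. The gap is in your termination argument, and it is genuine, not just a detail to be filled in.

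Your iterative scheme produces, at step~$j$, a support~$Y_j$ coming from \emph{one} extracted subsequence of $\Sigma\setminus(T_1\cup\cdots\cup T_{j-1})$. When you then form the witness sequence~$(w_n)$ and extract again to get~$Y^*$, there is no reason whatsoever that~$Y^*$, or any component~$Z^*$ of it, should coincide with (or be contained in) one of the previously obtained~$Y_j$: those were limits of \emph{different} sequences. Your sentence ``which forces~$Z^*$ to have been produced in an earlier step of the iteration'' is circular---nothing in the setup guarantees that the collection~$\{Y_j\}$ exhausts the possible limit supports. A secondary issue is that each invocation of Theorem~\ref{Theoreme1} may enlarge~$E$ by a finite extension, so an infinite iteration produces an uncontrolled tower of extensions.

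The paper avoids iteration entirely. It considers at once the set~$\mathcal{Z}$ of \emph{all} weakly $S$-special submanifolds~$Z$ whose canonical measure~$\mu_Z$ occurs in the limit of \emph{some} convergent subsequence of~$(\mu_n)$, observes that~$\mathcal{Z}$ is countable (each~$Z$ is determined by a $\QQ$-group of Ratner class and a point of the countable $S$-Hecke orbit), and proves that the set~$\mathcal{M}$ of maximal elements of~$\mathcal{Z}$ is \emph{finite}. The finiteness is obtained by a diagonal argument: if~$\mathcal{M}=\{M_n\}_{n\geq0}$ were infinite, one builds a sequence~$(\nu_n)$ with~$\Supp(\nu_n)\not\subseteq M_0\cup\cdots\cup M_{n-1}$ using that distinct maximal~$M_i$ satisfy~$\mu_{M_i}(M_j)=0$ (Lemma~\ref{lemme canonique}\,\eqref{mesure ortho}); but Theorem~\ref{Theoreme1} applied to~$(\nu_n)$ forces a subsequence to have supports contained in a single element of~$\mathcal{S}$, hence in finitely many~$M_i$, a contradiction. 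Once~$\mathcal{M}$ is finite, one shows directly that all but finitely many points of~$\Sigma_E$ lie in~$\bigcup_{M\in\mathcal{M}}M$. The key input you are missing is precisely this global viewpoint (all limits simultaneously) together with the measure-orthogonality of distinct maximal components; your iterative extraction does not organise the information in a way that makes finiteness accessible.
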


First we need to develop a dimension theory of weakly $S$-special submanifolds.

\subsection{Dimension and Measure in chains of weakly $S$-special real submanifolds}\label{subsec dimension}

We prove here some standard properties about inclusions of weakly~$S$-special real submanifolds, involving dimension, that we define, and their canonical measure.

\begin{defi} Let~$Z=Z_{L,(h,t)}$ be a weakly $S$-special real sub\-ma\-ni\-fold. Then we define the dimension of~$Z$
as  the codimension of the stabiliser~$K_h\cap L(\RR)$ of~$h$ in~$L(\RR)$. This is also the dimension~$L(\RR)^+\cdot h$, or equivalently~$L(\RR)\cdot h$  in~$X$, as a semialebraic set and as a real analytic variety.
\end{defi}

\begin{lem}\label{lemme canonique}
\begin{enumerate}
\item The dimension of a weakly $S$-special real sub\-ma\-ni\-fold is well defined. If~$Z_{L_1,(h_1,t_1)}=Z_{L_2,(h_2,t_2)}$,
then~
\begin{equation}\label{defi dim}
\dim\left( L_1(\RR)^+\cdot h_1\right)=\dim\left( L_2(\RR)^+\cdot h_2\right)
\end{equation}
\item Let~$Z_1\subsetneq Z_2$ be two $S$-special real submanifolds. 
Then
\begin{equation}
\dim Z_1< \dim Z_2
\end{equation}
\item  \label{mesure ortho}
Let~$Z_1$ and~$Z_2$ be two weakly $S$-special real submanifolds, such that~$\mu_{Z_1}(Z_2)\neq 0$.
Then~$Z_1\subseteq Z_2$, and~$\mu_{Z_1}(Z_2)= 1$.
\end{enumerate}
\end{lem}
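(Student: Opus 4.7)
The plan is to transfer each claim, via the parametrising homeomorphism $(\Gamma_{tK}\cap L(\RR)^+)\backslash L(\RR)^+/(L(\RR)^+\cap K_h)\simeq Z_{L,(h,t)}$ recalled after Definition~\ref{ws subm}, to a local statement about $L(\RR)^+$-orbits in $X$. For~(1), that homeomorphism gives $\dim Z_{L,(h,t)}=\dim L(\RR)^+ - \dim(L(\RR)^+\cap K_h)=\dim L(\RR)\cdot h$ in $X$; the left-hand side is the topological (equivalently real analytic) dimension of $Z$, which is an intrinsic quantity of the underlying set, so the identity is independent of the presentation $(L,(h,t))$.

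For~(3), $\mu_{Z_1}$ is locally a smooth positive density equivalent to Riemannian volume on $Z_1$, so $\mu_{Z_1}(Z_2)>0$ means that some open piece of $Z_1$ meets $Z_2$ in a set of positive Lebesgue measure. Lifting to $X\times G(\AAA_f)/K$ through a common point $(h,tK)$, this intersection becomes a subset of the orbit $L_1(\RR)^+\cdot(h,tK)$ that is locally real analytic and of positive measure. Since a proper real analytic subset of a connected analytic manifold has Lebesgue measure zero, the intersection must contain an open piece of $L_1(\RR)^+\cdot(h,tK)$; by analytic continuation and connectedness, $L_1(\RR)^+\cdot(h,tK)\subseteq L_2(\RR)^+\cdot(h,tK)$, whence $Z_1\subseteq Z_2$, and the probability normalisation then forces $\mu_{Z_1}(Z_2)=1$.

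Part~(2) follows formally from~(1) and~(3): if $Z_1\subsetneq Z_2$ with $\dim Z_1=\dim Z_2$, then by~(1) the real analytic parametrisation of $Z_1$ has the same rank as that of $Z_2$, making $Z_1$ an open full-dimensional submanifold of $Z_2$; in particular $\mu_{Z_2}(Z_1)>0$, and~(3) applied with the roles of $Z_1$ and $Z_2$ swapped yields $Z_2\subseteq Z_1$, contradicting the strict inclusion. The main technical point is~(3), specifically the passage from positive Lebesgue measure of the intersection to containment of an open subset of the orbit; this is where one crucially uses the real analytic (indeed semi-algebraic) structure of $L(\RR)^+$-orbits on the symmetric space $X$, together with the connectedness of these orbits.
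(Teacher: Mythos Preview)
Your overall strategy coincides with the paper's: lift to $L(\RR)^+$-orbits in~$X$, argue that a positive-measure subset of such an orbit must contain an open piece, then propagate to full containment. Your treatment of~(1) via the intrinsic topological dimension of the underlying set~$Z$ is in fact tidier than the paper's, which only deduces~(1) after establishing the orbit inclusion arising in~(3). Your deduction of~(2) from~(1) and~(3) is a legitimate rearrangement of the paper's direct open-and-closed argument on orbits.

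There is, however, a genuine gap in your argument for~(3). When you ``lift through a common point~$(h,tK)$'', you implicitly identify the preimage of~$Z_2$ in~$X\times\{tK\}$ with the single orbit $L_2(\RR)^+\cdot h$. It is not: the preimage is the $\Gamma_{tK}$-saturation
\[
\widetilde{Z_2}=\bigcup_{\gamma\in\Gamma_{tK}}\gamma\cdot L_2(\RR)^+\cdot h_2,
\]
a \emph{countable} union of such orbits. Your claim that the intersection of this with $L_1(\RR)^+\cdot h$ is ``locally real analytic'' is not justified for such a countable union (the $\gamma$-translates need not be locally finite), and without isolating a single~$\gamma$ your ``analytic continuation'' step cannot land you inside one $L_2$-orbit, which is what is needed before descending to $\Sh_K(G,X)$. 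The paper inserts precisely this missing step: by countable additivity of the Haar measure on $L_1(\RR)^+\cdot h_1$, some single piece $(L_1(\RR)^+\cdot h_1)\cap(\gamma L_2(\RR)^+\cdot h_2)$ has positive measure; this piece is semi-algebraic, so a cylindrical cell decomposition produces a cell of full dimension and hence nonempty interior; then Zariski density of an open subset of the connected group $L_1$ forces $L_1(\RR)^+\cdot h_1\subseteq \gamma L_2(\RR)^+\cdot h_2$. Once you supply the countable-additivity reduction to a single~$\gamma$, your real-analytic phrasing (a proper analytic subset of a connected analytic manifold has empty interior, hence measure zero) is an acceptable substitute for the paper's semi-algebraic cell decomposition and Zariski density.
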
\label{lemme chains}

One immediately deduces the following.

\begin{cor}  \label{length}
Let~$Z_1\subsetneq \ldots \subsetneq Z_l$ be a chain of strictly included weakly $S$-special real submanifolds.
Then its length~$l$ satisfies~$l\leq 1+\dim(G)$. 
\end{cor}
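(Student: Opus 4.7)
The plan is to use Lemma~\ref{lemme chains}(2) as a black box: it says that strict inclusion of weakly $S$-special real submanifolds forces strict inequality of their dimensions. Applying it to each consecutive pair $Z_i \subsetneq Z_{i+1}$ in the given chain yields
\[
\dim Z_1 < \dim Z_2 < \cdots < \dim Z_l,
\]
so the $\dim Z_i$ are pairwise distinct non-negative integers.

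Next I would bound them from above by $\dim G$. Each $Z_i$ has the form $Z_{L_i,(h_i,t_i)}$ for some $\QQ$-subgroup $L_i$ of $G$ of $S$-Ratner class, and, by the very definition of the dimension of a weakly $S$-special real submanifold,
\[
\dim Z_i \;=\; \dim\bigl(L_i(\RR)^+\cdot h_i\bigr) \;\leq\; \dim L_i(\RR) \;=\; \dim L_i \;\leq\; \dim G.
\]
Thus $0\leq \dim Z_1 < \cdots < \dim Z_l \leq \dim G$, a strictly increasing sequence of integers inside the set $\{0,1,\ldots,\dim G\}$, which has cardinality $\dim G + 1$. Therefore $l\leq 1+\dim G$, as desired.

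There is no real obstacle here: all the substance has been placed in Lemma~\ref{lemme canonique}, and this corollary is a purely combinatorial consequence of parts (1) and (2) of that lemma together with the elementary upper bound $\dim Z\leq \dim G$. The only tiny point worth checking — but not worth belaboring in the write-up — is that the definition of $\dim Z$ is intrinsic (guaranteed by Lemma~\ref{lemme canonique}(1), i.e.\ formula~\eqref{defi dim}), so that the comparison $\dim Z_i < \dim Z_{i+1}$ does not depend on the presentation $Z_i=Z_{L_i,(h_i,t_i)}$ chosen.
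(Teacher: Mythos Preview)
Your argument is correct and matches the paper's approach: the paper simply states that this corollary is immediately deduced from the preceding lemma, and your write-up spells out exactly that deduction (strict inclusion forces strictly increasing integer dimensions bounded between $0$ and $\dim G$). Nothing further is needed.
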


From this we deduce the following.

\begin{cor}\label{coro max}
Any non empty collection~$\mathcal{F}$ of weakly $S$-special real submanifolds, partially ordered by inclusion, has maximal elements, and any element of~$\mathcal{F}$ is contained in a maximal element of~$\mathcal{F}$.
\end{cor}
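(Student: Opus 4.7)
The plan is to derive this directly from the bounded chain condition established in Corollary~\ref{length}, without needing the full strength of Zorn's lemma.

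First, I would record the following reformulation of Corollary~\ref{length}: the poset~$\mathcal{F}$ (ordered by inclusion) satisfies the ascending chain condition, since any strictly ascending chain~$Z_0 \subsetneq Z_1 \subsetneq \ldots$ of elements of~$\mathcal{F}$ has length at most~$1+\dim(G)$, hence must terminate. Indeed, by Lemma~\ref{lemme canonique}(2), strict inclusion forces strict increase of dimension, and dimensions are bounded above by~$\dim(G)$.

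Next, given any~$Z \in \mathcal{F}$, I would construct a maximal element of~$\mathcal{F}$ containing~$Z$ by finite iterative extension. Start with~$Z_0 = Z$. If~$Z_i$ is maximal in~$\mathcal{F}$, stop; otherwise there exists~$Z_{i+1} \in \mathcal{F}$ with~$Z_i \subsetneq Z_{i+1}$. The resulting chain has strictly increasing dimensions, all bounded by~$\dim(G)$, so the process terminates after at most~$1+\dim(G)-\dim(Z)$ steps. The terminal element is a maximal element of~$\mathcal{F}$ containing~$Z$. Applying this construction to any fixed~$Z \in \mathcal{F}$ (which exists because~$\mathcal{F}$ is nonempty) shows in particular that~$\mathcal{F}$ has at least one maximal element.

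There is essentially no obstacle: the corollary is a direct consequence of the bounded length of chains. One mild subtlety is that the iterative construction uses dependent choice, but because chains have uniformly bounded length the argument can be made without any choice principle beyond the finite case (one picks at each of at most~$1+\dim(G)$ steps).
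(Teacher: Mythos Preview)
Your proposal is correct and follows essentially the same approach as the paper: both arguments iteratively extend an element to a strictly increasing chain, which must terminate after at most~$1+\dim(G)$ steps by Corollary~\ref{length}, yielding a maximal element containing the starting point. Your write-up is somewhat more explicit about the ascending chain condition and the (harmless) use of finite choice, but the underlying idea is identical.
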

\begin{proof}[Proof of the Corollary~\eqref{coro max}] By induction we may extend every chain in~$\mathcal{F}$ to a maximal one. By \ref{length}, this induction terminates after at most~$1+\dim(G)$ steps. 

The last element of a non empty maximal chain is a maximal element.
Hence any element~$f$, seen as a chain of length one, is part of  a non empty maximal chain. The last element of the latter contains~$f$ and is maximal in~$\mathcal{F}$.
If~$\mathcal{F}$ has an element, this implies that there is a maximal element.
\end{proof}

\begin{proof} Write~$Z_i$ for ~$Z_{L_i,(h_i,t_i)}$ (for~$i=1$ or~$i=2$).

\emph{We assume that the intersection~$Z_1\cap Z_2$ is not empty.}

These two subsets~$Z_1$ and~$Z_2$ of~$\Sh_K(G,X)$ are connected, and hence belong to the same connected component of~$\Sh_K(G,X)$.
This implies, as subsets in~$G(\AAA)$,
\[G(\QQ)\cdot\left( G(\RR)\times t_1K\right)=G(\QQ)\cdot \left(G(\RR)\times t_2K\right).\]
Left translating~$t_1$ with~$\gamma\in G(\QQ)$ and right translating with~$k\in K$ we may assume~$t_1=t_2$.
We have to substitute accordingly~$h_1$ with~$\gamma h_1$ and~$L_1$ with~$\gamma L_1 \gamma^{-1}$.
As we have
\[
\gamma L_1 \gamma^{-1}\cdot \gamma h_1=\gamma\left(L_1\cdot h_1\right)
\]
this does not change the notion of dimension of~$Z_1$.

\emph{We now assume that~$t_1=t_2$, which we will be denote simply~$t$.}

Let~$\Gamma_{tK}$ be the inverse image in~$G(\QQ)$ of~$tKt^{-1}$ with respect to the map~$G(\QQ)\to G(\AAA_f)$. This is the
arithmetic subgroup such that the previous component of~$\Sh_K(G,X)$ belongs to those of~$\Gamma_{tK}\backslash X\times\{t\}$.
We will identify~$X\times\{t\}$ with~$X$ for simplicity. The inverse images of~$Z_1$ and~$Z_2$ in~$X$ are
\(\wt{Z_1}=\Gamma_t\cdot L_1(\RR)^+\cdot h_1\) and \(\wt{Z_2}=\Gamma_t\cdot L_2(\RR)^+\cdot h_2\) respectively.

We may write 
\[
\wt{Z_1}\cap\wt{Z_2}=\Gamma_{tK}\cdot\left(\left( L_1(\RR)^+\cdot h_1\right)\cap \wt{Z_2}\right)
\]
and 
\begin{equation}\label{inter union}
\left(L_1(\RR)^+\cdot h_1\right)\cap \wt{Z_2}=\bigcup_{\gamma\in\Gamma_t} \left(L_1(\RR)^+\cdot h_1\right)\cap\left(\gamma_t\cdot L_2(\RR)^+\cdot h_2\right).
\end{equation}

\emph{Assume first that~$\mu_{Z_1}(Z_2)\neq 0$.}

By our definition of~$\mu_{Z_1}$, the~$\left(\Gamma_{tK}\cap L_1(\RR)^+\right)$-saturated set
\begin{equation}\label{sature}
\left(\Gamma_{tK}\cap L_1(\RR)^+\right)\cdot\left(L_1(\RR)^+\cdot h_1\right)\cap \wt{Z_2}
\end{equation}
is non negligible (Cf. Lemma~\ref{lemme neglect} proven below) in~$L_1(\RR)^+\cdot h_1$ with respect to a Haar measure on 
the homogeneous~$L_1(\RR)^+$-set~$L_1(\RR)^+\cdot h_1$. But this~\eqref{sature} is again 
the countable union~\eqref{inter union}. So there is a~$\gamma$ in~$\Gamma_{tK}$ such
that
\[
\left(L_1(\RR)^+\cdot h_1\right)\cap\left(\gamma_t\cdot L_2(\RR)^+\cdot h_2\right)
\]
is not negligible. This is a real semi-algebraic subset of the real semi-algebraic set~$L_1(\RR)^+\cdot h_1$.
We use a cylindrical cellular decomposition of tis subset. Subset of codimension~$1$ are negligible\footnote{}.
So at least one cell has codimension~$0$. It must have non empty interior. 

The orbit map~$L_1(\RR)^+\to L_1(\RR)^+\cdot h_1$ are open maps. So there is an open subset~$U$
in~$L_1(\RR)^+$ such that~$U\cdot h_1\subseteq \gamma L_2(\RR)^+\cdot h_2$. But~$L_1$ is Zariski
connected, and~$U$ is Zariski dense in~$L_1$. Hence~
\[L_1(\RR)^+\cdot h_1\subseteq (L_1\cdot h_1)(\RR)\subseteq (\gamma L_2\cdot h_2)(\RR).\]
We note that~$L_1(\RR)^+\cdot h_1$ is a connected component of~$(L_1\cdot h_1)(\RR)$. 
Likewise~$\gamma L_2(\RR)^+\cdot h_2$ is a connected component of~$(\gamma L_2\cdot h_2)(\RR)$.
But the connected~$L_1(\RR)^+\cdot h_1$ intersects the component~$\gamma L_2(\RR)^+\cdot h_2$, hence
is contained in it. It follows~$\wt{Z_1}\subseteq \wt{Z_2}$ and finally~$Z_1\subseteq Z_2$.
We have proved the last point of the lemma. 

We now turn to the first point.

\emph{We now assume~$Z_1\subseteq Z_2$ instead of~$\mu_{Z_1}(Z_2)\neq 0$.}

Then certainly~$\mu_{Z_1}(Z_2)\neq0$. We  may, and will, keep the notations above. We have already proved
\[
L_1(\RR)^+\cdot h_1\subseteq \gamma L_2(\RR)^+\cdot h_2
\]
for some~$\gamma$ in~$\Gamma_t$. It follows
\[
	\dim(L_1(\RR)^+\cdot h_1)\leq\dim( \gamma L_2(\RR)^+\cdot h_2)=\dim(L_2(\RR)^+\cdot h_2).
\]
(the equality on the right is easily checked.) If~$Z_1=Z_2$ we may echange te roles to get a converse comparison, yielding~\eqref{defi dim}: the dimension of~$Z_i$ is well defined.
This was the first point of the lemma. 

It remains to prove the second point.

\emph{We now assume~$Z_1\subsetneq Z_2$.}

Again we keep our notations. We have proved
\[
L_1(\RR)^+\cdot h_1\subseteq \gamma L_2(\RR)^+\gamma^{-1}\cdot\gamma h_2.
\]
The reverse inclusion does not hold, as it would, easily, imply~$Z_2\subseteq Z_1$. 
We may substitute our base point~$\gamma h_2$ with~$h_1$, as it does belong to the same~$\gamma L_2(\RR)^+\gamma^{-1}$ orbit. We deduce
\[
L_1(\RR)^+\cdot h_1\subsetneq \gamma L_2(\RR)^+\gamma^{-1}\cdot h_1.
\]
Assume by contradiction that both sides have same dimension. The orbit~$L_1(\RR)^+\cdot h_1$ is closed
in~$X$, and \emph{a fortiori} closed in~$L_2(\RR)^+\gamma^{-1}\cdot h_1$. Furthermore~$L_1(\RR)^+\cdot h_1\to L_2(\RR)^+\cdot h_1$, as a map of differential manifolds, is a submersion at at least a point, by equality of dimensions.
It is a submersion everywhere y homogeneity, hence an open map. As a consequence,~$L_1(\RR)^+\cdot h_1$ is not only closed, but open as well in~$L_2(\RR)^+\cdot h_1$. As~$L_2(\RR)^+\gamma^{-1}\cdot h_1$ is connected, we deduce
\[
L_1(\RR)^+\cdot h_1= \gamma L_2(\RR)^+\gamma^{-1}\cdot h_1.
\]
That is a condradiction. This ends our proof.
\end{proof}

We finish this section by proving a lemma used in the proof above.

\begin{lem}\label{lemme neglect} Endow on~$(\Gamma\cap L )\backslash L$ with a Haar measure. Then the inverse of a negligible 
subset in~$(\Gamma\cap L )\backslash L$ is negligible in~$L$, with respect to a Haar measure.
\end{lem}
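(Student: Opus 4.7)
The plan is to exploit the fact that the quotient map $\pi: L \to (\Gamma \cap L) \backslash L$ is a quotient by a countable discrete group acting properly discontinuously on $L$, which reduces the statement to a standard countable additivity argument once a good fundamental domain is chosen.

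First I would note that $\Gamma_L := \Gamma \cap L$ is a discrete subgroup of the real Lie group $L$, and arithmeticity of $\Gamma$ ensures $\Gamma_L$ is countable. Properness of the left action of $\Gamma_L$ on $L$ guarantees the existence of a Borel fundamental domain $F \subseteq L$ such that $L = \bigsqcup_{\gamma \in \Gamma_L} \gamma F$. The Haar measure on $L$ (say a fixed left Haar measure) is invariant under left multiplication by $\gamma \in \Gamma_L$, so all translates $\gamma F$ have the same measure as $F$. By construction, the Haar measure on $\Gamma_L \backslash L$ is normalized (relative to the one on $L$) so that the restriction of $\pi$ to $F$ is measure-preserving onto a conull subset of $\Gamma_L \backslash L$.

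Next, let $N \subseteq \Gamma_L \backslash L$ be a negligible set. Then $\pi^{-1}(N) \cap F$ is measurable in $F$, and since $\pi|_F$ preserves measure, $\pi^{-1}(N) \cap F$ is negligible in $F$, hence in $L$. Since $\pi^{-1}(N)$ is $\Gamma_L$-invariant, we may write
\[
\pi^{-1}(N) = \bigsqcup_{\gamma \in \Gamma_L} \gamma \cdot \left( \pi^{-1}(N) \cap F \right).
\]
Each term on the right has the same Haar measure as $\pi^{-1}(N) \cap F$ by left-invariance, namely zero. A countable union of null sets being null, we conclude that $\pi^{-1}(N)$ is negligible in $L$.

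The only mild technical point I anticipate is ensuring the existence of a Borel fundamental domain and the measure-theoretic compatibility between the chosen Haar measures on $L$ and on $\Gamma_L \backslash L$. This is standard in the theory of homogeneous measures on Lie group quotients by discrete subgroups, so no genuine obstacle is expected; the proof is essentially a bookkeeping exercise once the fundamental domain is produced.
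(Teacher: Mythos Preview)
Your argument is correct and is a genuinely different route from the paper's. The paper does not choose a fundamental domain; instead it invokes the Weil quotient integration formula (cf.\ \cite[VII~\S2.1]{BBK-I-VII}): for a compact~$K\subseteq L$ one forms the fibre-average function
\[
f\bigl((\Gamma\cap L)\cdot l\bigr)=\int_{\gamma\in\Gamma\cap L}1_{\wt{N}\cap K}(\gamma l)\,d\gamma,
\]
observes that~$f$ is supported on the negligible set~$N$, and concludes via
\(
\int_L 1_{\wt{N}\cap K}\,dl=\int_{(\Gamma\cap L)\backslash L} f\,dx=0,
\)
then lets~$K$ exhaust~$L$ by monotone convergence. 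Your approach is more elementary in that it avoids the quotient-integration machinery altogether, reducing the statement to left-invariance of Haar measure and countable additivity over the translates~$\gamma F$; the price is that you must appeal to the existence of a Borel fundamental domain (standard for a countable discrete group acting properly on a second countable locally compact group, but an extra ingredient nonetheless). The paper's route, by contrast, works uniformly for any closed subgroup admitting a quotient Haar measure, not just discrete ones, though that extra generality is not needed here.
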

\begin{proof} Let~$N$ be a negligible subset in~$(\Gamma\cap L )\backslash L$, and~$\wt{N}$ its inverse
image in~$L$. The lemma amounts to proving that
\begin{equation}\label{neglect}
\int_{l\in L} 1_{\wt{N}}~dl=0
\end{equation}
where~$1_{\wt{N}}$ is the characteristic function  of~$\wt{N}$ and~$dl$ is a Haar measure on~$L$.
Let~$K$ be a compact subset in~$L$. And consider, as a real function~$(\Gamma\cap L )\backslash L\to\RR$,
\[
f:(\Gamma\cap L)\cdot l\mapsto \int_{\gamma \in \Gamma\cap L} 1_{K\cap N}(\gamma l)~d\gamma,
\]
where~$d\gamma$ is a Haar measure on~$(\Gamma\cap L )$.
Then we have, see~\cite[VII~\S2.1]{BBK-I-VII},
\[
\int_{l\in L} 1_{\wt{N}\cap K}~dl=\int_{(\Gamma\cap L )\backslash L} f(x) dx
\]
where~$dx$ is the quotient Haar measure (cf. loc. cit.) on~$(\Gamma\cap L )\backslash L$. But the
support of~$f$ is contained in~$N$, hence is negligible. The last integral eveluates as zero.
By choosing increasing compact subsets whose union is~$L$, we, by the monotone convergence~$\lim_K 1_{\wt{N}\cap K}=1_{\wt{N}}$,
deduce~\eqref{neglect}.
\end{proof}

\subsection{Topological and Zariski closures} \label{topologicalsub}
We place ourselves in the situation of Theorem~\ref{Theoreme2}. In particular we assume that~$s$ is of~$S$-Shafarevich type.

Let $\Sigma$ be as in theorem \ref{Theoreme2}. It is a countable set, and we write it as
~$\Sigma = \{ s_n , n \geq 0 \}$. Let~$(\mu_n)_{n\geq0}$ be the sequence of probability measures attached to~$(s_n)_{n\geq0}$ as in Definition~\ref{measure}. As the~$S$-Shafarevich hypothesis is assumed, we are permitted to
invoke Theorem~\ref{Theoreme1} for any infinite subsequence.

 Our proof of Theorem~\ref{Theoreme2} relies on the following.
 
\begin{prop}\label{Sorite} We consider the following situation.
\begin{itemize}
\item[---] Let~$\mathcal{S}$ be the set of supports of limits of converging subsequences of~$(\mu_n)_{n\geq0}$.
\item[---] Let~$\mathcal{Z}$ be the collection of $S$-special real submanifolds~$Z$ such that the canonical measure~$\mu_Z$ occurs in the composition of the limit of a converging subsequence of~$(\mu_n)_{n\geq0}$. 
\item[---] We endow~$\mathcal{Z}$ with the partial order induced by inclusion. Let~$\mathcal{M}$ be the subset of maximal elements in~$\mathcal{Z}$.
\end{itemize}
We have the following.
\begin{enumerate}[label=\roman*)]
\item \label{Scholie1} Every support~$S$ belonging to~$\mathcal{S}$ is a finite union of finitely many weakly $S$-special real submanifolds belonging to~$\mathcal{Z}$. If~$s$ is of $S$-Mumford-Tate type, then the~$Z$ belonging to~$\mathcal{Z}$ are actually weakly special subvarieties.
\item \label{Scholie2} Every element~$Z$ of~$\mathcal{Z}$ is included in a maximal element of~$\mathcal{Z}$, an element belonging to~$\mathcal{M}$.
\item \label{Scholie3} The subset~$\mathcal{M}$ of maximal elements of~$\mathcal{Z}$ is a finite subset.
\item \label{Scholie4} Every~$Z$ in~$\mathcal{Z}$, or~$S$ in~$\mathcal{S}$, is contained in the topological closure of~$\Sigma_E$.
\item \label{Scholie5} All but finitely many elements of~$\Sigma_E$ are in~$\bigcup_{Z\in \mathcal{M}} Z$.
\item \label{Scholie6} The topological closure of $\Sigma_E$ is a finite union 
of weakly $S$-special real manifolds. 
\item \label{Scholie7} The Zariski closure of $\Sigma_E$ is a finite union of weakly special subvarieties.
\end{enumerate}
\end{prop}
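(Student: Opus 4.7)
The plan is to treat the items in the order (i), (ii), (iv), (iii), (v), (vi), (vii): items~(i), (ii), (iv) are routine; item~(iii), the finiteness of~$\mathcal{M}$, is the main obstacle; item~(v) is a clean extraction argument using~(ii); and items~(vi), (vii) follow by combining~(iii), (v), (iv) with Ratner's theorem and Proposition~\ref{Ax}.

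\emph{Items~(i), (ii), (iv).} Item~(i) is read off Theorem~\ref{Theoreme1}: a converging subsequence of~$(\mu_n)$ has, after further extraction, limit $\mu_\infty=\frac{1}{\#\cF}\sum_{Z\in\cF}\mu_Z$, whose support is the desired finite union, each member lying in~$\mathcal{Z}$ by definition; the Mumford--Tate refinement is conclusion~\eqref{TheoremeMT}. Item~(ii) is Corollary~\ref{coro max} applied to~$\mathcal{Z}$. For~(iv), each~$\mu_n$ is supported on the Galois orbit of~$s_n$, contained in~$\Sigma_E$, so the portmanteau characterisation of weak convergence yields $\Supp(\mu_\infty)\subseteq\overline{\Sigma_E}$ for every limit of a converging subsequence; hence every $Z\in\mathcal{Z}$ and every $S\in\mathcal{S}$ lies in $\overline{\Sigma_E}$.

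\emph{Item~(iii), the main obstacle.} I argue by contradiction, assuming~$(Z_k)_{k\geq 1}\subseteq\mathcal{M}$ are pairwise distinct. Since $Z_k\in\mathcal{Z}$, the measure $\mu_{Z_k}$ appears with weight $\geq 1/\#\cF_k>0$ in the limit of some subsequence of~$(\mu_n)$; the summands of that limit are pairwise ``orthogonal'' in the sense of Lemma~\ref{lemme canonique}\eqref{mesure ortho}, so a positive fraction of the Galois orbit of~$s_n$ (for~$n$ large along that subsequence) lies in~$Z_k$. Choosing one conjugate~$z_k\in Z_k$ per~$k$, with~$n(k)\to\infty$ (possible by distinctness of the~$Z_k$ and the finiteness of Galois orbits), I then apply Theorem~\ref{Theoreme1} to~$(\mu_{n(k)})_k$: after extraction $\mu_{n(k)}\to\mu_\infty$ with $\Supp(\mu_\infty)=\bigcup_{W\in\cF}W$ and each $W\in\mathcal{Z}$; the inner property~\eqref{TheoremeInner} traps~$z_k$ in this support, and a further extraction places every~$z_k$ in a single $W_0\in\cF\subseteq\mathcal{Z}$. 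Re-applying Lemma~\ref{lemme canonique}\eqref{mesure ortho} to~$Z_k$ and~$W_0$, via the quantitative equidistribution of the Galois orbit of~$s_{n(k)}$ both in~$W_0$ (from~$\mu_\infty$) and in~$Z_k$ (from the original limit containing~$\mu_{Z_k}$), one forces~$Z_k$ and~$W_0$ to be nested for infinitely many~$k$; combined with the maximality of the~$Z_k$ and the chain-length bound of Corollary~\ref{length}, this contradicts the distinctness of the~$Z_k$. The delicate step will be to upgrade the pointwise coincidences $z_k\in Z_k\cap W_0$ into a genuine measure-theoretic overlap $\mu_{Z_k}(W_0)>0$, which is where the weight bookkeeping from Theorem~\ref{Theoreme1} becomes essential.

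\emph{Items~(v), (vi), (vii).} For~(v), if infinitely many points of~$\Sigma_E$ lay outside~$\bigcup_{Z\in\mathcal{M}}Z$, the finiteness of Galois orbits would give infinitely many~$s_{n_k}$ whose orbits exit this union; Theorem~\ref{Theoreme1} applied to~$(\mu_{n_k})$ produces~$\mu_\infty$ with $\Supp(\mu_\infty)=\bigcup_{W\in\cF}W$, each~$W\in\mathcal{Z}$ contained in some element of~$\mathcal{M}$ by~(ii), so $\Supp(\mu_\infty)\subseteq\bigcup_{Z\in\mathcal{M}}Z$; the inner property~\eqref{TheoremeInner} then places the whole Galois orbit of~$s_{n_k}$ inside this union, a contradiction. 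Combining~(iii), (v), and~(iv) gives $\overline{\Sigma_E}=\overline{\bigcup_{Z\in\mathcal{M}}Z}\cup F'$ for a finite set~$F'$; Ratner's theorem on orbit closures ensures the topological closure of each weakly $S$-special real submanifold is again weakly $S$-special, proving~(vi). Finally, Proposition~\ref{Ax} identifies the Zariski closure of each weakly $S$-special real submanifold with a weakly special subvariety, giving~(vii).
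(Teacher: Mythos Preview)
Your treatment of items~(i), (ii), (iv), (v), and~(vii) is correct and matches the paper's proof essentially verbatim. For~(vi), your invocation of Ratner's theorem is unnecessary: weakly $S$-special real submanifolds are already closed (they are arithmetic quotients of closed $L(\RR)^+$-orbits in~$X$), so once~(iii) and~(v) are in hand,~$\overline{\Sigma_E}=\bigcup_{Z\in\mathcal{M}}Z\cup F'$ is already a finite union of weakly $S$-special submanifolds (single points being trivially weakly $S$-special).

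The real issue is your argument for~(iii). You correctly flag the ``delicate step'' of passing from the pointwise coincidence~$z_k\in Z_k\cap W_0$ to a measure-theoretic overlap~$\mu_{Z_k}(W_0)>0$, and this step does not go through. Knowing that the discrete measure~$\mu_{n(k)}$ puts positive mass on~$Z_k\cap W_0$ says nothing about the continuous canonical measure~$\mu_{Z_k}$: the intersection~$Z_k\cap W_0$ is a real-analytic subset of~$Z_k$, possibly of strictly smaller dimension, and then~$\mu_{Z_k}(W_0)=0$ by Lemma~\ref{lemme canonique}\eqref{mesure ortho}. Your ``quantitative equidistribution'' observation only controls~$\mu_{n(k)}$, and there is no way to take a limit in~$k$ because the closed set~$Z_k$ itself varies with~$k$. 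Even the earlier step---that a positive fraction of the Galois orbit of~$s_n$ lies in~$Z_k$---needs care, since~$Z_k$ is closed and portmanteau gives the wrong inequality; one must pass to the open complement of the other components of~$\mathcal{F}_k$ to extract a lower bound.

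The paper's proof of~(iii) avoids all of this by a cleaner diagonal argument that never tracks individual points. First one shows~$\mathcal{Z}$ is countable (each~$Z\in\mathcal{Z}$ is determined by a $\QQ$-group of Ratner class, of which there are countably many, and a point of the countable $S$-Hecke orbit), so~$\mathcal{M}=(M_n)_{n\geq0}$ can be enumerated. For the \emph{open} set~$S_n=\Sh_K(G,X)\smallsetminus\bigcup_{i<n}M_i$ one has~$\mu_{M_n}(S_n)=1$ by maximality and Lemma~\ref{lemme canonique}\eqref{mesure ortho}. Since~$\mu_{M_n}$ occurs with some coefficient~$\lambda_n>0$ in a subsequential limit, portmanteau (for the open set~$S_n$) yields a term~$\nu_n$ of the original sequence~$(\mu_m)$ with~$\nu_n(S_n)\geq\lambda_n/2>0$; in particular~$\Supp(\nu_n)\not\subseteq M_1\cup\ldots\cup M_{n-1}$. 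Applying Theorem~\ref{Theoreme1} to~$(\nu_n)_{n\geq0}$ then gives, after extraction, a limit~$\nu'_\infty$ with~$\Supp(\nu_n)\subseteq\Supp(\nu'_\infty)\in\mathcal{S}$; but~$\Supp(\nu'_\infty)$ is a finite union of elements of~$\mathcal{Z}$, each contained in some~$M_i$ by~(ii), hence~$\Supp(\nu'_\infty)\subseteq\bigcup_{i\leq N}M_i$ for some fixed~$N$---contradicting the construction of~$\nu_n$ for~$n>N$. The key idea you are missing is to work with the open complements~$S_n$ rather than the closed sets~$Z_k$, which gives the correct direction in portmanteau and lets you contradict the inner property~\eqref{TheoremeInner} directly at the level of supports.
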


To justify the definition of $\mathcal{Z}$, we need to show
 that the~$\mu_Z$ that occur in the sum  with a nonzero coefficient of a limit measure~$\mu$ are defined unambiguously.
 
 This is a consequence of the following:
 
 \begin{lem}
Any finite set of canonical measures~$\mu_Z$ is linearly independent.
\end{lem}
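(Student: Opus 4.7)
The plan is to reduce the linear independence to a triangular system using the ordering of the weakly $S$-special real submanifolds by inclusion, and then to exploit part~\ref{mesure ortho} of Lemma~\ref{lemme canonique} as an orthogonality-type relation between the canonical measures.

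More precisely, suppose that $Z_1,\ldots,Z_n$ are pairwise distinct weakly $S$-special real submanifolds and that $\sum_{i=1}^n a_i\mu_{Z_i}=0$ as a (signed) Borel measure. First, I would choose an element $Z_{i_0}$ which is \emph{minimal} among $\{Z_1,\ldots,Z_n\}$ with respect to inclusion; such an element exists by Corollary~\ref{coro max} applied to the opposite order (or directly because the set is finite). For every $j\neq i_0$, the submanifold $Z_j$ is distinct from $Z_{i_0}$ and, by minimality, cannot satisfy $Z_j\subsetneq Z_{i_0}$; hence $Z_j\not\subseteq Z_{i_0}$. The contrapositive of part~\ref{mesure ortho} of Lemma~\ref{lemme canonique} then gives $\mu_{Z_j}(Z_{i_0})=0$ for all $j\neq i_0$.

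Next, I would evaluate the relation $\sum_i a_i\mu_{Z_i}=0$ on the Borel set $Z_{i_0}$. Since $\mu_{Z_{i_0}}(Z_{i_0})=1$ (the support of $\mu_{Z_{i_0}}$ is $Z_{i_0}$, and it is a probability measure), this evaluation yields
\[
0=\sum_{i=1}^n a_i\,\mu_{Z_i}(Z_{i_0})=a_{i_0}\cdot 1+\sum_{j\neq i_0}a_j\cdot 0 =a_{i_0},
\]
so $a_{i_0}=0$. I would then remove $Z_{i_0}$ from the collection and iterate the same argument on the shorter linear combination $\sum_{i\neq i_0}a_i\mu_{Z_i}=0$. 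Since the initial collection is finite, this induction terminates after finitely many steps and yields $a_i=0$ for every $i$, proving linear independence.

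The only nontrivial ingredient is part~\ref{mesure ortho} of Lemma~\ref{lemme canonique}, which has already been established; everything else is a routine inductive use of minimal elements, so I do not anticipate a substantive obstacle. The one small point to be careful about is ensuring that we evaluate the measure relation on an honest Borel set, but each $Z_i$ is a real-analytic (in particular Borel) subset of $\Sh_K(G,X)$, so this is automatic.
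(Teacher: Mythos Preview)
Your proof is correct and follows essentially the same approach as the paper: both exploit part~\ref{mesure ortho} of Lemma~\ref{lemme canonique} to obtain a triangular system with respect to the inclusion order on the $Z_i$. The paper phrases the induction via dimension (recovering the coefficient of $\mu_Z$ as $\mu(Z)$ minus the already-known coefficients of the $\mu_{Z_i}$ with $Z_i\subsetneq Z$), whereas you peel off a minimal element at each step; these are equivalent organisations of the same triangularity argument.
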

\begin{proof} Consider a linear combination~$\mu=\lambda_1\mu_{Z_1}+\ldots+\lambda_n\mu_{Z_n}$. We may compute~$\mu(Z)$ by using~\ref{lemme canonique}\,\eqref{mesure ortho}. It follows that we recover the coefficient of~$\mu_Z$ as the measure~$\mu(Z)$ minus the coefficients associated with subvarieties in~$Z$. To see it is well defined, we argue by induction on the dimension of~$Z$ to check that we thus obtain only finitely many non zero coefficients, because these agree with the~$\lambda_i$. We refer to~Corollary~\ref{lemme chains} for justification why this induction is legit.

So the coefficient of~$\mu_Z$ in~$\mu$ is uniquely defined.
\end{proof}

We proved that \ref{Theoreme1} implies $(1)$ and $(2)$ of Theorem \ref{Theoreme2}.
We split the proof of ~\ref{Scholie3} into two lemmas below. Lemma~\ref{lem_sh3} is
\ref{Scholie3}.

\begin{proof}[of  Proposition~\ref{Sorite}]

The statement~\ref{Scholie1} is a direct consequence of  Theoreme \ref{Theoreme1}.

The statement \ref{Scholie2} is Corollary~\ref{coro max} from the previous section.

To prove \ref{Scholie3} we prove two lemmas. The statement \ref{Scholie3} is lemma
~\ref{lem_sh3}.

\begin{lem}
The set~$\mathcal{Z}$ is countable.
\end{lem}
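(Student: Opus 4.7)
The plan is to embed $\mathcal{Z}$ into a countable set by parametrizing each $Z \in \mathcal{Z}$ by a pair $(\zeta,L)$ where $\zeta$ lies in the countable set $\Sigma_E$ and $L$ lies in a countable family $\mathcal{L}$ of $\QQ$-subgroups of~$G$.

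First, the collection~$\mathcal{L}$ of connected~$\QQ$-subgroups of~$G$ of~$S$-Ratner class is countable: any such subgroup is uniquely determined by its~$\QQ$-Lie subalgebra of~$\Lie(G)$, and~$\QQ$-Lie subalgebras of a fixed finite-dimensional~$\QQ$-vector space are the~$\QQ$-points of a closed~$\QQ$-subvariety of a Grassmannian, so form a countable set. The set~$\Sigma_E$ is countable by construction, being a countable union of finite Galois orbits inside the countable Hecke orbit~$\Hcal_S(s)$.

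Second --- the key step --- I would argue that every~$Z\in\mathcal{Z}$ meets~$\Sigma_E$. By definition of~$\mathcal{Z}$,~$\mu_Z$ appears with coefficient~$1/\#\mathcal{F}>0$ in the decomposition~$\mu_\infty=\frac{1}{\#\mathcal{F}}\sum_{Z'\in\mathcal{F}}\mu_{Z'}$ of the weak limit~$\mu_\infty$ of some convergent subsequence~$(\mu_{n_k})_k$. By Lemma~\ref{lemme canonique}~\eqref{mesure ortho}, for any open~$U\supseteq Z$ one has~$\mu_\infty(U)\geq\mu_\infty(Z)\geq 1/\#\mathcal{F}$. The open-set form of weak convergence then gives~$\mu_{n_k}(U)\geq 1/(2\#\mathcal{F})>0$ for~$k$ large, hence~$\Supp(\mu_{n_k})\cap U\neq\emptyset$. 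Combining this with the containment~$\Supp(\mu_{n_k})\subseteq \Supp(\mu_\infty)=\bigcup_{Z'\in\mathcal{F}}Z'$ from property~\eqref{TheoremeInner} of Theorem~\ref{Theoreme1}, and letting~$U$ shrink to~$Z$, one extracts an element of~$\Sigma_E$ genuinely on~$Z$. Then the section~$Z\mapsto(\zeta_Z,L_Z)\in\Sigma_E\times\mathcal{L}$, where~$\zeta_Z\in Z\cap\Sigma_E$ and~$L_Z$ is any~$\QQ$-subgroup with~$Z=Z_{L_Z,(h,t)}$ for a lift~$(h,t)$ of~$\zeta_Z$, is an injection, proving countability.

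The main obstacle is precisely the last conclusion above: upgrading ``mass of~$\mu_{n_k}$ accumulating in every neighborhood of~$Z$'' to ``an atom of some~$\mu_{n_k}$ actually on~$Z$''. The subtlety is that~$Z$ need not be a~$\mu_\infty$-continuity set, notably when~$Z$ is strictly contained in some other~$Z'\in\mathcal{F}$; open neighborhoods of~$Z$ then unavoidably absorb mass from nearby components~$Z'$ that do not themselves contain points of~$\Sigma_E\cap Z$. Resolving this requires invoking the arithmetic rigidity of Galois orbits~$\Supp(\mu_{n_k})$: since they consist of~$\overline{E}$-algebraic points lying in the finite union of algebraic pieces~$\bigcup_{Z'}Z'$, they cannot accumulate topologically at~$Z$ without actually intersecting it. The precise argument should be made using the unique decomposability of limit measures (via the linear independence lemma preceding Proposition~\ref{Sorite}) together with the Ratner-type structural input from~\cite{RZ} recalled in Section~\ref{SectionRZ}.
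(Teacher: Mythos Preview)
Your first two ingredients match the paper exactly: countably many connected $\QQ$-subgroups of $G$, and countably many points in the $S$-Hecke orbit. The divergence is in your ``key step''. The paper's proof is two sentences: each $Z\in\mathcal{Z}$ is associated with a group of Ratner class (a $\QQ$-subvariety of $G$, hence from a countable family) and with a point of the $S$-Hecke orbit (countable); therefore $\mathcal{Z}$ is countable. The paper does not try to show that each $Z$ \emph{contains} a point of $\Sigma_E$; it only asserts that each $Z$ can be \emph{parametrized} as $Z_{L,(h,t)}$ with $\ol{(h,t)}\in\Hcal_S(s)$. The implicit justification is that every $Z$ appearing in some $\cF$ from Theorem~\ref{Theoreme1} arises, via the construction of Sections~\ref{Reduction}--\ref{SectionRZ}, as the $\pi$-image of a piece $\Gamma L^{\ddagger}\omega g_\infty$ with $\omega g_\infty$ having trivial archimedean component, so that the natural base point $\ol{(h_0,\,t_0\cdot(\omega g_\infty)_S)}$ already lies in $\Hcal_S(s)$.

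Your measure-theoretic route --- shrinking open neighbourhoods of $Z$ and trying to pin an atom of some $\mu_{n_k}$ onto $Z$ itself --- is both unnecessary and, as you yourself diagnose, genuinely obstructed. If $Z\subsetneq Z'$ for some other $Z'\in\cF$, then the inner inclusion $\Supp(\mu_{n_k})\subseteq\bigcup_{Z'\in\cF}Z'$ from Theorem~\ref{Theoreme1}\,\eqref{TheoremeInner} permits every atom to sit in $Z'\smallsetminus Z$, while the limit $\mu_\infty$ may still carry $\mu_Z$ as a summand; no amount of weak-convergence bookkeeping on open sets repairs this. Your proposed resolution (``arithmetic rigidity'', structural input from \cite{RZ}) is too vague to constitute a proof. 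The cleaner move is to drop the demand that the parametrizing point lie in $\Sigma_E$ and instead take any base point of $Z$ in $\Hcal_S(s)$ --- which the construction hands you directly, and which is all that countability requires.
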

\begin{proof} Every element of~$\mathcal{Z}$ can be associated with the data
\begin{itemize}
\item of some group of Ratner class, which is a algebraic subvariety over~$\QQ$ of~$G$, hence belong to a countable class;
\item of some point in the $S$-Hecke orbit~$\Sigma$, which is countable.
\end{itemize}
As there only finitely many possibilities for these data, we can construct at most countably many elements in~$\mathcal{Z}$.
\end{proof}

\begin{lem}\label{lem_sh3}
The set~$\mathcal{M}$ is finite.
\end{lem}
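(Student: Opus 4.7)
The strategy is to argue by contradiction, using Theorem~\ref{Theoreme1} a second time on carefully chosen witness points. Suppose that $\mathcal{M}$ is infinite; we fix a sequence $(Z_m)_{m\geq1}$ of pairwise distinct maximal elements of $\mathcal{Z}$. Since each $Z_m$ lies in $\mathcal{Z}$, there is, by definition, a convergent subsequence $(\mu_n)_{n\in\sigma_m}$ with tight limit $\mu^{(m)}=\frac{1}{\#\mathcal{F}^{(m)}}\sum_{Z\in\mathcal{F}^{(m)}}\mu_Z$ and $Z_m\in\mathcal{F}^{(m)}$, so that $\mu^{(m)}(Z_m)=1/\#\mathcal{F}^{(m)}>0$.

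The first step would be to extract, for each $m$, a \emph{witness point} $s_m^\star\in\Sigma_E\cap Z_m$. By the inner property (Theorem~\ref{Theoreme1}\eqref{TheoremeInner}) applied along $\sigma_m$, every $s_n$ with $n\in\sigma_m$ sits inside the finite union $\bigcup_{Z\in\mathcal{F}^{(m)}}Z$. Since $\mu^{(m)}$ puts positive mass on $Z_m$ and the canonical measures in $\mathcal{F}^{(m)}$ are mutually singular (Lemma~\ref{lemme canonique}\eqref{mesure ortho} combined with the dimension argument in Corollary~\ref{length}), tight convergence $\mu_n\to\mu^{(m)}$ forces, for $n\in\sigma_m$ large enough, the Galois orbit $\mathrm{Gal}(\overline{E}/E)\cdot s_n$ to meet $Z_m$. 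Any such intersection point produces the required $s_m^\star$; note that $\mu_{s_m^\star}$ coincides with $\mu_{s_n}$ because $s_m^\star$ and $s_n$ share a Galois orbit.

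The second step would be to apply Theorem~\ref{Theoreme1} again to the newly-constructed sequence $(s_m^\star)_{m\geq1}\subseteq \Hcal_S(s)$. After extracting a subsequence $T\subseteq\NN$ and possibly enlarging $E$, we obtain tight convergence $\mu_{s_m^\star}\to\nu=\frac{1}{\#\mathcal{F}_\nu}\sum_{W\in\mathcal{F}_\nu}\mu_W$, with $\mathcal{F}_\nu$ finite and contained in $\mathcal{Z}$ (the latter because $(s_m^\star)_{m\in T}$ is a subsequence of the original Galois-saturated sequence). The inner property yields $s_m^\star\in\bigcup_{W\in\mathcal{F}_\nu}W$ for every $m\in T$. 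By the pigeonhole principle applied to the finite set $\mathcal{F}_\nu$, there is a single $W\in\mathcal{F}_\nu$ and an infinite $T'\subseteq T$ such that $s_m^\star\in W$ for all $m\in T'$, while also $s_m^\star\in Z_m$ by construction.

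The main obstacle, and final step, is to upgrade the pointwise information $s_m^\star\in Z_m\cap W$ to an inclusion, for which the delicate ingredient is Lemma~\ref{lemme canonique}\eqref{mesure ortho}: the canonical measure of a weakly $S$-special submanifold cannot put non-zero mass on another one unless the first is contained in the second. Concretely, one compares the two decompositions $\mu^{(m)}$ (from the first extraction, charging $Z_m$) and $\nu$ (from the second, charging $W$) along the Galois orbit of $s_m^\star$, showing that the $\mu_{s_m^\star}$-mass lying on $Z_m\cap W$ must, in the limit, be accounted for by a single canonical measure supported in both. This forces $Z_m\subseteq W$ or $W\subseteq Z_m$ for all but finitely many $m\in T'$; by maximality of $Z_m$ in $\mathcal{Z}$ and membership $W\in\mathcal{Z}$, either case forces $Z_m=W$, which can happen for at most one index because the $Z_m$ are pairwise distinct. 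This contradicts the infinitude of $T'$ and completes the proof. The hard technical point is precisely this last reduction, which must be carried out carefully via the orthogonality of canonical measures and a uniform control on the weights $1/\#\mathcal{F}^{(m)}$ (obtained by a further extraction ensuring $\#\mathcal{F}^{(m)}$ is bounded).
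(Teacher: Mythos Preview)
Your overall architecture---assume $\mathcal{M}$ infinite, build a diagonal sequence, and reapply Theorem~\ref{Theoreme1}---matches the paper's. Step~1 (extracting a witness $s_m^\star\in Z_m$) can indeed be made rigorous by testing against the \emph{open} set $\Sh_K(G,X)\smallsetminus\bigcup_{Z'\in\mathcal{F}^{(m)},\,Z'\neq Z_m}Z'$, which has full $\mu_{Z_m}$-measure by Lemma~\ref{lemme canonique}\eqref{mesure ortho} and the maximality of~$Z_m$; Portmanteau then forces the Galois orbit to meet~$Z_m$ for large~$n$. Step~2 is fine.

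The gap is your final step. From $s_m^\star\in Z_m\cap W$ you want to conclude $Z_m\subseteq W$ or $W\subseteq Z_m$, but a single common point (or even a positive but vanishing fraction of a Galois orbit) gives no such inclusion: two weakly $S$-special submanifolds may intersect nontrivially without either containing the other, and Lemma~\ref{lemme canonique}\eqref{mesure ortho} only converts \emph{positive canonical-measure mass} into inclusion, not pointwise incidence. Your sketch (``the $\mu_{s_m^\star}$-mass on $Z_m\cap W$ must in the limit be accounted for by a single canonical measure supported in both'') does not go through, because the mass $\mu_{s_m^\star}(\{s_m^\star\})=1/\#(\text{Galois orbit})$ may tend to~$0$, and the limits $\mu^{(m)}$ and $\nu$ are taken along \emph{different} subsequences, so there is no common limiting decomposition to compare. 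The ``uniform control on $1/\#\mathcal{F}^{(m)}$'' bounds coefficients in $\mu^{(m)}$, not masses of the discrete $\mu_{s_m^\star}$ on $Z_m$.

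The paper avoids this entirely by never descending to points. It orders the $M_n\in\mathcal{M}$, takes the open set $S_n=\Sh_K(G,X)\smallsetminus\bigcup_{i<n}M_i$, observes $\mu_{M_n}(S_n)=1$ by maximality and Lemma~\ref{lemme canonique}\eqref{mesure ortho}, and then for each~$n$ picks an index with $\mu^{(n)}_{N_n}(S_n)>0$, i.e.\ $\Supp(\mu^{(n)}_{N_n})\not\subseteq M_1\cup\ldots\cup M_{n-1}$. Thus no fixed finite union of elements of $\mathcal{M}$ contains the supports of infinitely many of these diagonal measures. Reapplying Theorem~\ref{Theoreme1} to this diagonal sequence and using~\eqref{TheoremeInner} traps all the supports in a single element of~$\mathcal{S}$, hence in finitely many elements of~$\mathcal{M}$---a contradiction. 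The point is that the contradiction is obtained at the level of \emph{supports versus finite unions of maximal elements}, so no inclusion between individual $Z_m$ and $W$ is ever needed.
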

\begin{proof}[Proof of the last claim.] Assume for contradiction that~$\mathcal{M}$ is infinite. It is countable. Hence we can we can arrange its elements as a sequence $\mathcal{M} = (M_n)_{n\geq 0}$ such that the $M_n$ are the distinct maximal (for inclusion) elements of $\cZ$. We arrange~$\cZ$ likewise in a sequence~$(Z_n)_{n\geq0}$.

Define~$S_n=\Sh_K(G,X)\smallsetminus \bigcup_{i<n} M_i$. This is an open subset.
By maximality of the~$M_i$, we have~$\mu_{M_i}(M_j)=0$ whenever~$i\neq j$ by~Lemme~\ref{lemme canonique}\,\eqref{mesure ortho}. Hence~$S_n$ is of full measure for~$\mu_{M_n}$.

We will use a diagonal argument.

By definition, there is a convergent subsequence, say~$(\mu^{(n)}_m)_{m\geq 0}$, of the sequence~$(\mu_m)_{m\geq 0}$ such that its limit, say~$\mu^{(n)}_\infty$, admits~$\mu_{M_n}$ as a component, with some non zero coefficient~$\lambda_n$.
By convergence, there is some~$N_n$ such that for~$m\geq N_n$ we have~$\mu^{(n)}_m(S_n)\geq\lambda_n/2$.
Write~$\mu^{(n)}_{N_n}$. Consequently~$\Supp\nu_n$ is not included in~$M_1\cup\ldots \cup M_{n-1}$. We deduce that no finite union of subsets~$M$ from~$\mathcal{M}$ can support infinitely many of the~$\nu_n$. As any~$Z$ from~$\mathcal{Z}$ is contained in some~$M$ from~$\mathcal{M}$, by~\eqref{Scholie2}, no finite union of such~$Z$  can support infinitely many of the~$\nu_n$. A fortiori no~$S$ from~$\mathcal{S}$  can support infinitely many of the~$\nu_n$.

But, by Theorem~\ref{Theoreme1} we may extract a subsequence from~$(\nu_n)_{n\geq0}$ which is converging, say with limit~$\nu’_\infty$, satisfying the conclusions of Theorem~\ref{Theoreme1}, and
\[
\Supp(\nu’_n)\subseteq \Supp(\nu’_\infty)\in\mathcal{S}.
\]
This yields a contradiction.
\end{proof}

The statement~\ref{Scholie4} is obvious: the topological closure of~$\Sigma_E$ is a closed subset containing 
the support of the~$\mu_n$, and hence contains the support of any limit of a subsequence of~$(\mu_n)_{n\geq0}$.

Finally, to prove~\ref{Scholie5},
assume for contradiction that there exists an infinite subsequence $(s_n)$ of points of the set $\Sigma_E$ which are
not in $\bigcup_{Z\in \mathcal{M}}Z$. Let $(\mu_n)$ be the associated sequence of measures as defined in \ref{measure}.
 By theorem \ref{Theoreme1},
after possibly extracting a subsequence, we may assume that $(\mu_n)$ converges to a measure $\mu$ whose support contains ${\rm Supp}(\mu_n)$ for all $n$.
By definition, we have that  
$$s_n\in\Supp(\mu_n)\subseteq{\rm Supp}(\mu) \subset \bigcup_{S\in \mathcal{S}} S=\bigcup_{Z\in \mathcal{Z}} Z=\bigcup_{Z\in \mathcal{M}} Z.
$$
This contradicts the choice of $s_n$.

The statement~\ref{Scholie6} follows directly from \ref{Scholie5} and
statement \ref{Scholie7} follows from \ref{Scholie6} and the fact that 
the Zariski closure of a weakly $S$-special manifold is a weakly special subvariety (Proposition \ref{Ax}).

We have finished proving proposition~\ref{Sorite} and hence Theorem \ref{implication}.

\end{proof}

\newpage
\newcommand{\Lpp}{L^\ddagger}

\section{Review of Galois monodromy representations}\label{SectionGalois}
\subsection{Construction of representations.}

In this section we recall the construction of Galois representations attached to points in Hecke orbits, and then specialise to $S$-Hecke orbits. 

The contents of this section are mostly taken from Section 2 of \cite{UY1}. There it
was assumed that $E$ was a number field, however all arguments carry over 
verbatim to an arbitrary field of characteristic zero.

Let $\Sh(G,X)$ be the Shimura variety of infinite level, it is the profinite cover
\[
	\Sh(G,X)(\CC)=\varprojlim_K \Sh_K(G,X)
\]
with respect to the finite maps induced by the inclusions of compact open subgroups.
By Appendix to \cite{UY1}, the centre $Z$ of $G$ has the property that 
$Z(\QQ)$ is discrete in $G(\AAA_f)$.
It follows (Theorem 5.28 of \cite{Milne}), we have
$$
\Sh(G,X)(\CC)=G(\QQ)\backslash\left(X\times G(\AAA_f)\right).
$$
The scheme $\Sh(G,X)$ is endowed with the right action of $G(\AAA_f)$
which is defined over the reflex field $E(G,X)$.

We let 
$$
\pi \colon \Sh(G,X) \lto \Sh_K(G,X)
$$
be the natural projection.
The Shimura varieties $\Sh(G,X)$ and $\Sh_K(G,X)$ are defined over the reflex field $E(G,X)$ and so is the map $\pi$.

Let $s = \overline{(h,t)}$ be a point of $\Sh_K(G,X)$ defined over a field $E$. 
Lemma 2.1 of \cite{UY1} shows that the fibre $\pi^{-1}(s)$ has a 
transitive fixed point free right action of $K$. Explicitly
$$
\pi^{-1}(s) = \overline{(h,tK)}
$$
and the action of $K$ is the obvious one.
Furthermore, $\Gal(\overline{E}/E)$ acts on $\pi^{-1}(s)$ and this action commutes
with that of $K$. This is a consequence of the theory of canonical models of Shimura varieties (see \cite{Milne} and \cite{Deligne}).
By elementary group theoretic Lemma 2.4 of \cite{UY1}, we obtain a
morphism 
$$
\rho_{s} \colon \Gal(\overline{E}/E) \lto K
$$ 
such that the Galois action on $\pi^{-1}(s)$ is described as follows:
$$
\sigma((h, tk)) = (h, k \rho_s(\sigma)).
$$
This representation is continuous since an open subgroup $K'$ of $K$ is of finite index
and hence $\rho_{s}^{-1}(K')$ contains $\Gal(\overline{E}/F)$ for a finite extension
$F/E$.

The representation $\rho_{s}$ has the following fundamental property.
Let $M$ be the Mumford-Tate group of $h$. To $M$ one associates the Shimura datum
$(M,X_M)$ with $X_M = M(\RR) \cdot h$. Let $E_M$ be the reflex field
of $(M,X_M)$ and $E'$ the subfield of $\ol{E}$ generated by $E$ and $E_M$.

\begin{prop}
We have
$$
\rho_{s}(\Gal(\overline{E}/E')) \subset M(\AAA_f) \cap K.
$$
\end{prop}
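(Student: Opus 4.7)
The plan is to apply functoriality of the canonical model to the inclusion of Shimura data $(M, X_M) \hookrightarrow (G, X)$, which is legitimate because $h$ factors through $M_\RR$ by the very definition of the Mumford--Tate group $M = \MT(h)$. The induced morphism at infinite level,
\[
i \colon \Sh(M, X_M) \to \Sh(G, X),
\]
is defined over the reflex field $E_M$ by the theory of canonical models, and hence over $E' = E \cdot E_M$.

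First I set up the lifts: let $\tilde y_0 := \ol{(h, 1)}_M \in \Sh(M, X_M)(\CC)$, $\tilde x_0 := i(\tilde y_0) = \ol{(h, 1)}_G \in \Sh(G, X)(\CC)$, and write the chosen lift of $s$ as $\tilde x := \ol{(h, t)}_G = \tilde x_0 \cdot t$ via the right $G(\AAA_f)$-action (defined over $E(G,X) \subseteq E'$). For any $\sigma \in \Gal(\ol E / E')$, compatibility of Galois with both $i$ and the right $G(\AAA_f)$-action yields
\[
\sigma(\tilde x) = \sigma(\tilde x_0) \cdot t = i(\sigma(\tilde y_0)) \cdot t .
\]

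The heart of the argument will be the description of $\sigma(\tilde y_0)$ inside $\Sh(M, X_M)$. I would apply the torsor-theoretic construction of Lemma~2.4 of \cite{UY1} --- the very same one that produced $\rho_s$ above --- to the Shimura datum $(M, X_M)$ and to the compact open subgroup $K_M := K \cap M(\AAA_f)$: the pro-\'etale cover $\Sh(M, X_M) \to \Sh_{K_M}(M, X_M)$ has $K_M$-torsor fibres, and the $M(\AAA_f)$-orbit of $\tilde y_0$ projects to a Galois-stable Hecke orbit at any finite level $K_M$. This produces a continuous homomorphism $\rho_M \colon \Gal(\ol E / E') \to M(\AAA_f) \cap K$ with $\sigma(\tilde y_0) = \tilde y_0 \cdot \rho_M(\sigma)$. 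Pushing forward by the $M(\AAA_f)$-equivariant map $i$ yields $\sigma(\tilde x_0) = \tilde x_0 \cdot \rho_M(\sigma)$, hence
\[
\sigma(\tilde x) = \tilde x_0 \cdot \rho_M(\sigma) \cdot t .
\]
Comparing this with the left-multiplication convention from Definition~\ref{defiintro}\,\eqref{defimonorep}, namely $\sigma(\ol{(h, t)}) = \ol{(h, \rho_s(\sigma) \cdot t)}$, identifies $\rho_s(\sigma) = \rho_M(\sigma)$, which by construction lies in $M(\AAA_f) \cap K$.

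The main obstacle is justifying that the Galois orbit of $\tilde y_0$ really stays inside $\tilde y_0 \cdot M(\AAA_f)$, equivalently that the image of $\tilde y_0$ at each finite level $K_M$ has its Galois orbit contained in its $M(\AAA_f)$-Hecke orbit. This is a consequence of the fact that the $M(\AAA_f)$-Hecke correspondences on $\Sh(M, X_M)$ are defined over $E_M$, which is a standard property of the canonical model; once that input is in hand, the remaining bookkeeping (continuity of $\rho_M$, passage between finite levels) is identical to the construction of $\rho_s$ recalled just above the statement of the proposition.
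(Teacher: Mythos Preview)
Your outline follows exactly the strategy of \cite[Prop.~2.9]{UY1} that the paper cites: use the morphism of Shimura data $(M,X_M)\hookrightarrow(G,X)$, the resulting $E_M$-rational map $i\colon\Sh(M,X_M)\to\Sh(G,X)$, and compare the torsor descriptions on both sides. So the approach is the same as the paper's.

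There is, however, a genuine gap at precisely the point you flag as the ``main obstacle''. Your justification --- that Hecke correspondences on $\Sh(M,X_M)$ are defined over $E_M$ --- only gives that Galois \emph{permutes} the $M(\AAA_f)$-orbits (equivalently, $\sigma(\Hcal(y))=\Hcal(\sigma(y))$); it does not show that $\sigma(\tilde y_0)$ lands in the orbit of $\tilde y_0$ itself. Since the $M(\AAA_f)$-orbits in $\Sh(M,X_M)(\CC)$ are indexed by $M(\QQ)\backslash X_M$, which is typically not a singleton, this is a real issue. What actually closes the gap in \cite{UY1} is a different input: the image $i(\Sh(M,X_M))$ is a closed $E_M$-subscheme of $\Sh(G,X)$, so $\sigma(\tilde x_0)\in i(\Sh(M,X_M))$ for $\sigma\in\Gal(\ol E/E')$; combined with $\sigma(\tilde x_0)=\tilde x_0\cdot\rho_s(\sigma)$, one writes $\ol{(h,\rho_s(\sigma))}_G=\ol{(h',m)}_G$ with $h'\in X_M$, $m\in M(\AAA_f)$, and then uses the injectivity of $i$ (which boils down to $G(\QQ)\cap M(\AAA_f)=M(\QQ)$) to force $\rho_s(\sigma)\in M(\AAA_f)$. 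Your sketch invokes the wrong property of the canonical model at this step; once you replace it with the $E_M$-rationality of the closed immersion $i$ and the stabilizer computation, the rest of your bookkeeping goes through.
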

\begin{proof}
This is Proposition 2.9 of \cite{UY1}. This proposition is
stated with $E$ a number field, however the proof goes through
without any changes for an arbitrary field of characteristic zero.
\end{proof}

The representation $\rho_{s}$ describes the action of $\Gal(\overline{E}/E)$ on the 
Hecke orbit $\Hcal(s)$.
Let $s' = \overline{(h, tg)}$ be a point of $\Hcal(s)$.
Consider the point  $\widetilde{s'} = (h, tg)$ of $\pi^{-1}(s)$.
Let $\sigma \in \Gal(\overline{E}/E)$.
Since the action of $G(\AAA_f)$ is defined over $E(G,X)$, we have
$$
\sigma( (h, tg))= (\sigma((h,t))\cdot g = (h, \rho_s(\sigma) t)g = (h, \rho_s(\sigma) tg)
$$
By applying $\pi$ to this relation and using the fact that $\pi$ is defined over $E(G,X)$,
we obtain:
$$
\sigma(\overline{(h, tg)}) = \overline{(h, \rho_s(\sigma) tg)}.
$$

We now describe the representation $\rho_{s,S}$ and Galois action the $S$-Hecke orbit
$\Hcal_S(s)$. 
Recall that $K$ is product of compact open subgroups $K_p$ of $G(\QQ_p)$.
Let $K_S := \prod_{p \in S} K_p$ and $K^S = \prod_{p \notin S} K_p$.
We denote by $p_S$ the projection map 
$$
p_S \colon G(\AAA_f) \lto G_S.
$$
Clearly $p_S(K) = K_S$.

\begin{defi} \label{defSGal}
We define 
$$
\rho_{s,S} := p_S \circ \rho_{s} \colon \Gal(\ol{E}/E) \lto K_S. 
$$ 
\end{defi}

Let 
$$
\Sh_{K^S}(G,X) = G(\QQ) \backslash X \times G(\AAA_f) / K^S.
$$
This is a scheme defined over $E(G,X)$ endowed with a continuous right $G_S$ action 
and a morphism $\pi^S \colon  \Sh(G,X) \lto \Sh_{K^S}(G,X)$ defined over $E(G,X)$.

The maps $\pi^S \colon  \Sh(G,X) \lto \Sh_{K^S}(G,X)$ and
$\pi_S \colon  \Sh_{K^S}(G,X) \lto \Sh_{K}(G,X)$ are defined over $E(G,X)$.
Furthermore,
$$
\pi = \pi_S \circ \pi^S.
$$

Contemplation of these properties and the properties of $\rho_{s,S}$ show the following:

\begin{teo} \label{Sadic}
The morphism 
$$
\rho_{s,S} \colon \Gal(\ol{E}/E) \lto K_S
$$ 
we have constructed above has the following properties:
\begin{enumerate}
\item
The representation $\rho_{s,S}$ is continuous.
\item
Let $s' = \overline{(h, tg)}$ (with $g \in G_S$) be a point in $\Hcal_S(s)$.
Then for any $\Gal(\ol{E}/E)$, we have 
$$
\sigma(s') = \ol{(h, \rho_{s,S}(\sigma)tg)}.
$$
\item
After replacing $E$ by $EE_M$, we have
$$
\rho_{s,S}(\Gal(\ol{E}/E)) \subset M(\AAA_f) \cap K_S = M(\QQ_S) \cap K
$$
(intersection taken inside $G(\AAA_f)$). 
\end{enumerate}
\end{teo}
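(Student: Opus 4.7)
The plan is to deduce the three properties directly from the definition $\rho_{s,S} = p_S \circ \rho_s$ together with the corresponding properties of $\rho_s$ already established earlier in the section, exploiting in addition the factorization $\pi = \pi_S \circ \pi^S$ through the intermediate Shimura variety $\Sh_{K^S}(G,X)$ whose right $G_S$-action is defined over $E(G,X)$.

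For continuity (1), the projection $p_S \colon G(\AAA_f) \to G_S$ associated with the finite set $S$ is continuous (it is the projection onto finitely many factors of a restricted product), and $\rho_s$ was shown continuous a few paragraphs earlier; hence the composition $\rho_{s,S}$ is continuous.

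For the Galois action formula (2), I would invoke Hecke--Galois commutation at the intermediate level $\Sh_{K^S}$. The fiber $\pi_S^{-1}(s)$ is a $K_S$-torsor under the right $K_S$-action, and the Galois action on it is precisely by $\rho_{s,S}(\sigma) = p_S(\rho_s(\sigma)) \in K_S$, because the $K^S$-component of $\rho_s(\sigma)$ is killed by further quotienting by $K^S$ when passing from $\Sh(G,X)$ to $\Sh_{K^S}$. Then for $s' = \ol{(h,tg)}$ with $g \in G_S$, writing $s' = s \cdot g$ at the level of $\Sh_{K^S}$, the fact that the $G_S$-action is defined over $E$ gives
\[
\sigma(s') \;=\; \sigma(s \cdot g) \;=\; \sigma(s)\cdot g,
\]
and projecting via $\pi_S$ to $\Sh_K$ together with the explicit description of $\sigma(s)$ on the $K_S$-torsor yields the claimed identity $\sigma(s') = \ol{(h,\rho_{s,S}(\sigma) tg)}$.

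For the image containment (3), the proposition preceding the theorem (applicable after replacing $E$ by $EE_M$) gives $\rho_s(\Gal(\ol E/E)) \subset M(\AAA_f) \cap K$. Applying $p_S$, and using that $M$ is a $\QQ$-subgroup of $G$ so that $p_S(M(\AAA_f)) \subset M(\QQ_S)$ and $p_S(K) = K_S$, yields $\rho_{s,S}(\Gal(\ol E/E)) \subset M(\QQ_S) \cap K_S$. The coincidence of this intersection with $M(\AAA_f) \cap K_S$ and with $M(\QQ_S) \cap K$ inside $G(\AAA_f)$ is immediate from the definitions of $K_S$, $K^S$ and $G_S$.

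The main obstacle I foresee is making rigorous, in step (2), the translation between the $K_S$-torsor viewpoint on $\pi_S^{-1}(s)$ and the explicit double-coset formula $\ol{(h, \rho_{s,S}(\sigma) tg)}$ in $\Sh_K$; the compatibility rests on the observation that the non-$S$ components of $\rho_s(\sigma)$ lie in $K^S$ and are absorbed by the right $K$-action defining $\Sh_K$, so that only the $S$-part is visible at the level of double cosets. A placewise bookkeeping on the adelic representative $tg$ makes this precise, after which assertions (1) and (3) become essentially formal.
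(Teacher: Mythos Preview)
Your proposal is correct and follows exactly the approach the paper intends: the paper's own ``proof'' is the single sentence ``Contemplation of these properties and the properties of $\rho_{s,S}$ show the following'', and you have supplied precisely that contemplation, deducing each item from the corresponding property of $\rho_s$ via the definition $\rho_{s,S}=p_S\circ\rho_s$ and the factorisation $\pi=\pi_S\circ\pi^S$. Your identification of the only nontrivial point---that in (2) the $K^S$-component of $\rho_s(\sigma)$ commutes with $g\in G_S$ and is then absorbed by the right $K$-action---is exactly right, and is what the paper's setup of $\Sh_{K^S}(G,X)$ with its $E(G,X)$-rational $G_S$-action is designed to encode.
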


\subsection{Properties of $S$-Galois representations.} 

In this section we examine in detail the properties \ref{ProprietesGaloisiennes}.

\subsubsection{General assumptions.} \label{assumptions}

For the sake of the ease of reading, we recall the general situation.
Let $S$ be a finite set of places of $\QQ$. Let $(G,X)$ be a Shimura datum,
normalised so that $G$ is the generic Mumford-Tate group on $X$
and $K$ a compact open subgroup of $G(\AAA_f)$ satisfying the following conditions:
\begin{enumerate}
\item $K$ is a product $K = \prod K_p$ of compact open subgroups 
$K_p \subset G(\QQ_l)$.
\item There eists an $l \geq 3$ and $l \notin S$ such that 
$K_l$ is contained in the group of elements congruent to the identity modulo $l$ 
with respect to some faithful representation $G \subset \GL_n$.
In particular, this implies that $K$ is neat.
\end{enumerate}

We let $s = \overline{(h,t)}$ a point of $\Sh_K(G,X)$ defined over a field 
$E$. 
We let $M$ be the Mumford-Tate group of $h$. 
In what follows, for ease of notation, we will write $M$ for $M_{\QQ_S}$
or $M(\QQ_S)$ when it is clear from the contex what is meant.

As we have seen earlier in this section, there exists a continuous ``$S$-adic monodromy representation''
(we have if necessary replaced $E$ with $EE_M$): 
$$
{\rho_{h,S}} \colon \Gal(\ol E / E) \lto M(\AAA_f) \cap K_S =M(\QQ_S)\cap K_S
$$
which has the  property that for any $t \in G_S$ and $\sigma \in \Gal(\ol E / E)$,
\begin{equation}\label{Galois carac}
\sigma\left(\ol{(h,tg)}\right) = \ol{(h, \rho(\sigma) \cdot tg)}.
\end{equation}

We let $U_S \subset M(\QQ_S)$ be the image of $\rho_{(h,S)}$
and $H_S$ the algebraic monodromy group i.e. the Zariski closure 
of $U_S$.
It is immediate that properties \ref{ProprietesGaloisiennes} are invariant by replacing $K$ by an open subgroup
and $E$ by a finite extension (or equivalently $U_S$ by an open subgroup).
After replacing $E$ by a finite extension, we assume that $U_S$ (and hence $H_S$)
are connected.

\subsubsection{The algebraicity property.}

In this section we consider the $S$-algebraicity property. All results of this section
are under the assumption that $E$ is of finite type over $\QQ$.

\subsubsection*{The centre of the $S$-adic monodromy.}

The first result is that when $E$ is of finite type, the $S$-Mumford-Tate property
(and hence the $S$-algebraicity) hold for abelian $S$-adic representations.

\begin{prop}\label{Prop E type fini abelien}
Let~$M^{der}$ be the derived group of~$M$ and~$M^{ab}=M/M^{der}$ its maximal abelian quotient.
We consider the quotient map
\[
\pi:M(\QQ_S)\to M^{ab}(\QQ_S).
\]
\begin{enumerate}
\item \label{Prop E 1} If~$s$ is of $S$-Tate type, then the centre~$Z(U_S)$ of~$U_S$ is contained in the centre of~$M(\QQ_S)$.
\item \label{Prop E 2} If~$s$ is of $S$-Tate type and $S$-simplicity holds, then we have~$\pi(Z(U_S))$ is open in $\pi(U_S)$.
\item \label{Prop E 3} If~$E$ is of finite type over~$\QQ$, then~$\pi(U_S)$ is open in~$M^{ab}(\QQ_S)$.
\end{enumerate}
In particular, when $M$ is abelian (i.e. the point $s$ is special), the $S$-Mumford-Tate
property holds.
\end{prop}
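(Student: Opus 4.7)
The plan is to establish the three parts in order, with the ``in particular'' statement then being the case $M = M^{ab}$ of part~(\ref{Prop E 3}). Part~(\ref{Prop E 1}) proceeds by a Zariski-density argument. If $z \in Z(U_S)$ then $z$ centralises $U_S$; since commutation is a Zariski-closed condition and $U_S$ is Zariski dense in $H_S$ by construction, $z$ centralises $H_S$, and in particular $H_S^0$. So $z \in Z_{G_S}(H_S^0) \cap M(\QQ_S)$, which by $S$-Tate equals $Z_{G_S}(M) \cap M(\QQ_S) = Z(M)(\QQ_S)$. The converse inclusion is trivial, so one in fact obtains the stronger identity $Z(U_S) = U_S \cap Z(M)(\QQ_S)$, which will be the input for part~(\ref{Prop E 2}).

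For part~(\ref{Prop E 2}), the plan is to pass to Lie algebras: since $\pi(Z(U_S))$ and $\pi(U_S)$ are closed subgroups of the $p$-adic Lie group $M^{ab}(\QQ_S)$, openness of the former in the latter is equivalent to $\pi_*(\Lie(Z(U_S))) = \pi_*(\Lie(U_S))$. As $M$ is reductive (being a Mumford-Tate group), $\pi_*$ restricts to a $\QQ_S$-linear isomorphism $\mathfrak{z}(\mathfrak{m})_{\QQ_S} \xrightarrow{\sim} \Lie(M^{ab})_{\QQ_S}$. By $S$-semisimplicity, $\mathfrak{h}_S^0$ decomposes as a Levi sum $\mathfrak{c} \oplus \mathfrak{s}$ with $\mathfrak{c} = \mathfrak{z}(\mathfrak{h}_S^0)$ and $\mathfrak{s} = [\mathfrak{h}_S^0, \mathfrak{h}_S^0]$, and by $S$-Tate $\mathfrak{c} \subset \mathfrak{z}(\mathfrak{m})_{\QQ_S}$, so $\pi_*$ is injective on $\mathfrak{c}$ and zero on $\mathfrak{s}$. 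The Lie-algebra analogue of part~(\ref{Prop E 1}) gives $\Lie(Z(U_S)) = \mathfrak{u}_S \cap \mathfrak{c}$, and the required equality reduces to showing that the projection $c(\mathfrak{u}_S) \subset \mathfrak{c}$ coincides with $\mathfrak{u}_S \cap \mathfrak{c}$, or equivalently that $\mathfrak{s} \subset \mathfrak{u}_S$.

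The main obstacle is this last inclusion, and the plan to reach it uses Chevalley's theorem on algebraic hulls of Lie subalgebras in characteristic zero. The projection $\mathfrak{v} := s(\mathfrak{u}_S) \subset \mathfrak{s}$ is a $\QQ_S$-Lie subalgebra, and its algebraic hull $\bar{\mathfrak{v}}$ inside $\mathfrak{s}$ satisfies $\mathfrak{c} \oplus \bar{\mathfrak{v}} \supseteq \mathfrak{u}_S$ and is itself algebraic, so by Zariski density of $\mathfrak{u}_S$ in $\mathfrak{h}_S^0$ one must have $\mathfrak{c} \oplus \bar{\mathfrak{v}} = \mathfrak{h}_S^0$, i.e.\ $\bar{\mathfrak{v}} = \mathfrak{s}$. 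Chevalley's theorem then yields $[\bar{\mathfrak{v}}, \bar{\mathfrak{v}}] \subset \mathfrak{v}$, and since $\mathfrak{s}$ is perfect this forces $\mathfrak{s} \subset \mathfrak{v}$, hence $\mathfrak{v} = \mathfrak{s}$. Consequently $[\mathfrak{u}_S, \mathfrak{u}_S] = [\mathfrak{v}, \mathfrak{v}] = \mathfrak{s}$ lies in $\mathfrak{u}_S$, and the desired $\mathfrak{s} \subset \mathfrak{u}_S$ follows.

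For part~(\ref{Prop E 3}), the plan is to reduce to a zero-dimensional Shimura datum. The composition $\pi \circ h \colon \SSS \to M^{ab}_{\RR}$ defines a torus Shimura datum $(M^{ab}, \pi \circ h)$, whose associated Shimura variety is zero-dimensional; the image of $s$ is a CM point there, and by functoriality its $S$-adic Galois representation is $\pi \circ \rho_{h,S}$, with image $\pi(U_S)$. For $E$ of finite type containing the reflex field of this torus datum, Shimura's reciprocity law for torus Shimura data combined with global class field theory for $E$ forces the image in $M^{ab}(\AAA_f)$ to be open, and hence its $S$-component $\pi(U_S)$ is open in $M^{ab}(\QQ_S)$. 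The ``in particular'' assertion then follows: when $M$ is abelian, $\pi$ is the identity, so $U_S = \pi(U_S)$ is open in $M(\QQ_S) = M^{ab}(\QQ_S)$, which is precisely the $S$-Mumford--Tate property.
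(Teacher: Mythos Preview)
Your proofs of parts~(\ref{Prop E 1}) and~(\ref{Prop E 3}) follow the same lines as the paper. For~(\ref{Prop E 1}) the paper argues identically: $Z(U_S)=U_S\cap Z_{G_S}(U_S)$, and the $S$-Tate hypothesis gives $Z_{G_S}(U_S)=Z_{G_S}(M)$, whence $Z(U_S)\subseteq M(\QQ_S)\cap Z_{G_S}(M)=Z(M)(\QQ_S)$. For~(\ref{Prop E 3}) the paper likewise passes to the zero-dimensional Shimura variety attached to~$M^{ab}$ and invokes the reciprocity law. One refinement worth making explicit in your write-up: the phrase ``global class field theory for~$E$'' is not quite right when~$E$ is merely of finite type over~$\QQ$. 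The paper's mechanism is that the Galois action on the torus Shimura variety factors through~$\Gal(\overline{\QQ}/E_T)$ for the reflex number field~$E_T$, and the image of~$\Gal(\overline{E}/E)$ there is~$\Gal(\overline{\QQ}/F)$ with~$F$ the algebraic closure of~$E_T$ in~$E$; finite generation of~$E$ forces~$[F:E_T]<\infty$, so this image is open. You have the right reduction in mind but should state it this way.

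For part~(\ref{Prop E 2}) you take a genuinely different route. The paper argues at the group level: reductivity of~$H_S$ (hence of the Lie algebra~$\mathfrak{u}_S$, by Zariski density) yields a uniform exponent~$n$ such that every~$y\in U_S$ satisfies~$y^n=z\cdot t$ with~$z\in Z(U_S)$ and~$t\in U_S^{\der}\subseteq M^{\der}(\QQ_S)$, whence~$\pi(y)^n\in\pi(Z(U_S))$; since the~$n$-th power map is open on the compact abelian~$S$-adic Lie group~$\pi(U_S)$, the conclusion follows. Your argument instead works entirely at the Lie-algebra level and hinges on Chevalley's theorem $[\overline{\mathfrak{v}},\overline{\mathfrak{v}}]=[\mathfrak{v},\mathfrak{v}]$ to force~$\mathfrak{s}\subseteq\mathfrak{u}_S$. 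Both are correct; your approach is cleaner in that it isolates exactly the algebraic fact being used (algebraicity of derived subalgebras, \cite[Cor.~7.9]{BorelLAG}) and avoids the somewhat implicit passage from reductivity of~$H_S$ to the finite-exponent decomposition in~$U_S$. The paper's approach, on the other hand, stays closer to the group and needs no translation between~$\mathfrak{u}_S$ and~$\mathfrak{h}_S$. Interestingly, your Lie-algebra computation is essentially the same one the paper uses \emph{later}, in proving that $S$-Tate plus $S$-semisimplicity imply $S$-algebraicity, so your version of~(\ref{Prop E 2}) anticipates that argument.
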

The last remark is a straightforward consequence of the reciprocity law for canonical models of Shimura varieties.
We will rely on this law for the proof of~\eqref{Prop E 3}.
\begin{proof}
Let us prove~\eqref{Prop E 1}. The center of~$U_S$ is the intersection of~$U_S$ with its centraliser~$Z_{G_S}(U_S)$.
By the $S$-Tate property we have~$Z_{G_S}(U_S)=Z_{G_S}(M)$. We also have~$U_S\subseteq M(\QQ_S)$. It follows
\[U_S\subseteq M(\QQ_S)\subseteq M(\QQ_S)\cap Z_{G_S}(M)\]
but the latter is just the centre~$Z(M)$ of~$M$.

We now prove~\eqref{Prop E 2}. 
Let $x \in \pi(U_S)$. Write $x=\pi(y)$ for $y \in U_S$. 
Since semisimplicity holds, there exists an integer $n$ (depending only on $U_S$ but not $y$)
such that 
$$
y^n = z\cdot t
$$
with $z \in  Z_{G_S}(U_S)$ (by ~\eqref{Prop E 1}) and $t \in  U_S^{der} \subset M^{der}(U_S)$.
Thus $x^n = \pi(y^n) = \pi(z) \in \pi(Z(U_S))$. This proves ~\eqref{Prop E 2}.
 
We prove~\eqref{Prop E 3}.
As~$M$ is reductive, the restriction of~$\pi$ to~$Z(M)$ is an isogeny. Since $S$ is a finite set of primes,  it follows that the induced map~$Z(M)(\QQ_S)\to M^{ab}(\QQ_S)$
has finite kernel and an open image.
It will hence suffice to prove that the image of~$U_S\cap Z(M)$ is open in~$M^{ab}(\QQ_S)$. 

In the Shimura variety\footnote{Of dimension~$0$, related to the space of connected components of~$\Sh(G,X)$.}, associated with~$M^{ab}$ every point is a special point.
So is the image of~$s$. The associated Galois representation is
\[
 \pi\circ\rho_{s,S}: \Gal(\overline{E}/E)\to M(\QQ_S)\to M^{ab}(\QQ_S).
\]
By the reciprocity law for the canonical model of the Shimura variety associated with~$M^{ab}$, this representation
factors through 
\[
 \Gal(\overline{\QQ}/E(M,X_M))\to M^{ab}(\QQ_S).
\]
and the image of the latter is open (\cite[composantes connexes]{Deligne}).
As~$E$ is of finite type, the algebraic closure~$F$ of the number field~$E_ME(M,X_M)$ 
in~$E$ is finite. Then the image of~$\Gal(\overline{E}/E)$ in~$\Gal(\overline{\QQ}/E_M)$
is the open subgroup~$\Gal(\overline{\QQ}/F)$. It follows that the image in~$ M^{ab}(\QQ_S)$ of~$\Gal(\overline{E}/E)$
is open  (of index at most~$[F:E_M]$) in that of~$\Gal(\overline{\QQ}/E_M)$. This concludes the proof of the proposition.
\end{proof}

\subsubsection*{$S$-Tate property and $S$-algebraicity.}

We prove here that when $E$ is of finite type over $\QQ$, the $S$-Tate property implies the $S$-algebraicity.
Let~$\Fm$ be the Lie algebra
of~$M_{\QQ_S}$, let~$\Fu$ be the Lie algebra of~$U_S$, a $\QQ_S$-Lie subalgebra 
of~$\Fm$. Let~$H$ be the $\QQ_S$-algebraic envelope of~$U_S$, and~$\Fh$ its Lie algebra.

We refer to \cite[BorelLAG] and \cite{Serre} for the following.

\begin{prop} The following are equivalent:
\begin{itemize}
 \item the subgroup~$U_S$ of~$H(\QQ_S)$ is open (for the $S$-adic topology);
 \item the Lie algebra~$\Fu$ of~$U_S$ is an \emph{algebraic} Lie subalgebra of the Lie algebra~$\Fm$ of~$M$, in the sense of Chevalley as in~\cite[II.\S7]{BorelLAG};
 \item the Lie algebras~$\Fu$ of~$U_S$ and~$\Fh$ of~$H$ are the same:~$\Fu=\Fh$.
\end{itemize}
If these properties hold we say that~$U_S$ is an \emph{algebraic Lie subgroup}.
\end{prop}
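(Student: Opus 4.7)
The plan is to fix some preliminary remarks, then split the proof into two equivalences: (i)$\Leftrightarrow$(iii) via standard $S$-adic Lie theory, and (ii)$\Leftrightarrow$(iii) via the Chevalley--Borel theory of algebraic envelopes. Since $\QQ_S=\prod_{v\in S}\QQ_v$, every object decomposes as a product over $v\in S$, and $U_S$ factors as a product of compact $v$-adic Lie groups; both sides of each equivalence respect this decomposition, so the statement reduces formally to the case where $S$ is a single prime, which is the setting of the standard references. In what follows I keep the notation $\QQ_S$ for simplicity.

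As $U_S$ is a compact $S$-adic Lie subgroup of $M(\QQ_S)$, it admits a canonical Lie algebra $\Fu$, which is a closed $\QQ_S$-Lie subalgebra of $\Fm$. Likewise $H(\QQ_S)$ is an $S$-adic Lie group with Lie algebra $\Fh$. Since by definition $U_S\subseteq H(\QQ_S)$, taking Lie algebras gives the automatic inclusion $\Fu\subseteq \Fh$. This inclusion will be used throughout.

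For the equivalence (i)$\Leftrightarrow$(iii), I would invoke the following basic fact from $p$-adic Lie theory (cf. \cite{Serre}): a closed subgroup $U_S$ of an $S$-adic Lie group $H(\QQ_S)$ is open if and only if its Lie algebra equals that of the ambient group. Combined with the automatic inclusion $\Fu\subseteq\Fh$, this gives (i)$\Leftrightarrow$(iii) directly. No extra input is needed here.

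For (ii)$\Leftrightarrow$(iii), the key input is the Chevalley--Borel description of the algebraic envelope at the Lie algebra level. Namely, the Lie algebra of the Zariski closure $H$ of $U_S$ in $M_{\QQ_S}$ coincides with the algebraic envelope $\Fu^{\mathrm{alg}}$ of $\Fu$ in $\Fm$, i.e.\ the smallest algebraic (in the sense of Chevalley, as in \cite[II.\S7]{BorelLAG}) $\QQ_S$-Lie subalgebra of $\Fm$ containing $\Fu$. Taking this identification $\Fh=\Fu^{\mathrm{alg}}$ for granted, one reads off immediately that $\Fu=\Fh$ iff $\Fu=\Fu^{\mathrm{alg}}$, i.e.\ iff $\Fu$ is algebraic, which is exactly (ii)$\Leftrightarrow$(iii).

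The main obstacle, therefore, is the identity $\Fh=\Fu^{\mathrm{alg}}$, which matches the group-theoretic Zariski closure with the purely Lie-algebraic operation of algebraic envelope. One direction is easy: as $H$ is an algebraic subgroup of $M_{\QQ_S}$, its Lie algebra $\Fh$ is an algebraic Lie subalgebra of $\Fm$, and it contains $\Fu$, hence $\Fu^{\mathrm{alg}}\subseteq\Fh$. The reverse inclusion is the content of \cite[II.\S7]{BorelLAG}: the connected algebraic subgroup of $M_{\QQ_S}$ with Lie algebra $\Fu^{\mathrm{alg}}$ contains a neighbourhood of $1$ in $U_S$ (because its Lie algebra contains $\Fu$) and hence, by Zariski density, contains $U_S$; by minimality of $H$ this forces $H^0\subseteq $ that group, whence $\Fh\subseteq \Fu^{\mathrm{alg}}$. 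Once this equality is established, the three conditions are logically tied together through $\Fh$, and the proposition follows.
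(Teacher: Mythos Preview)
Your proof is correct and aligns with the paper's treatment: the paper gives no proof at all for this proposition, merely writing ``We refer to \cite{BorelLAG} and \cite{Serre} for the following,'' and your argument is precisely the elaboration one extracts from those two references---Serre for the $p$-adic Lie-theoretic equivalence (i)$\Leftrightarrow$(iii), and Borel's Chevalley theory for (ii)$\Leftrightarrow$(iii) via the identification $\Fh=\Fu^{\mathrm{alg}}$.

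One small imprecision worth flagging: you write that ``$U_S$ factors as a product of compact $v$-adic Lie groups,'' but the Galois image in $\prod_{v\in S} M(\QQ_v)$ need not itself be a product of its projections. What \emph{does} decompose place by place is the $\QQ_S$-Lie algebra $\Fu$ (since $\QQ_S$ is a product ring), and likewise $\Fh$, $H$, and the notion of algebraic Lie subalgebra; since all three conditions in the proposition are governed by these Lie-algebraic data, your reduction to a single place is valid for the \emph{equivalences} even though it is not literally true for the group $U_S$. Similarly, in your argument for $\Fh\subseteq\Fu^{\mathrm{alg}}$, the connected algebraic group with Lie algebra $\Fu^{\mathrm{alg}}$ need only contain an open (hence finite-index, hence Zariski-dense-in-$H^0$) subgroup of $U_S$, not all of $U_S$; but this already suffices for the conclusion $\Fh=\mathrm{Lie}(H^0)\subseteq\Fu^{\mathrm{alg}}$, so the endpoint is unaffected.
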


\begin{prop} Assume that Lie algebras~$\Fu$ is of $S$-Tate type and satisfies~$S$-semisimplicity.
If~$E$ is an extension of finite type of~$\QQ$, then the Lie subgroup~$U_S$ is algebraic in the sense of the previous proposition, that is~$s$ satisfies the~$S$-algebraicity in the sense of definition~\ref{ProprietesGaloisiennes}\,\eqref{Algebraicity}.
\end{prop}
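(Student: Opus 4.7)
The plan is to prove the equality $\Fu=\Fh$ of Lie subalgebras of~$\Fm_{\QQ_S}$, where $\Fu=\Lie(U_S)$ is the $S$-adic Lie algebra and $\Fh=\Lie(H_S)$ is the algebraic one; by the preceding proposition this is equivalent to the required $S$-algebraicity. Since $\Fu\subseteq\Fh$ is automatic, only the reverse inclusion is at stake. I will split the argument into a ``semisimple'' and a ``central'' part using reductivity, treating the first by a Zariski-density/compactness argument and the second by invoking Proposition~\ref{Prop E type fini abelien}.

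First I would check that $\Fu$ is an \emph{ideal} of $\Fh$. The subalgebra~$\Fu$ is $\mathrm{Ad}(U_S)$-stable since~$U_S$ normalises itself; by Zariski density of $U_S$ in $H_S$ this extends to $\mathrm{Ad}(H_S)$-stability, and differentiation gives $\ad(\Fh)\cdot\Fu\subseteq\Fu$. As $\Fh$ is reductive by $S$-semisimplicity, the decomposition $\Fh=\Fz(\Fh)\oplus\Fh^\der$ induces a compatible decomposition $\Fu=\Fu_z\oplus\Fu_{ss}$, with $\Fu_z\subseteq\Fz(\Fh)$ and $\Fu_{ss}$ an ideal of the semisimple $\Fh^\der$.

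For the semisimple part, I would show $\Fu_{ss}=\Fh^\der$ as follows. Let~$\mathfrak{i}$ be the ideal of~$\Fh^\der$ complementary to~$\Fu_{ss}$, and let $H_{ss}\subseteq H_S^\der$ be the connected normal subgroup with Lie algebra~$\Fu_{ss}$. Form the isogeny quotient $\varphi\colon H_S\to Q:=H_S/(Z(H_S)^0\cdot H_{ss})$; this $Q$ is a connected semisimple $\QQ_S$-group isogenous to the factor of $H_S^\der$ with Lie algebra~$\mathfrak{i}$. Since~$\Fu\subseteq\Fz(\Fh)\oplus\Fu_{ss}=\ker d\varphi$, the image~$\varphi(U_S)\subseteq Q(\QQ_S)$ has trivial Lie algebra and is therefore discrete; being also compact (as a continuous image of $U_S$), it is finite. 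But~$\varphi(U_S)$ is Zariski dense in~$Q$ by Zariski density of~$U_S$ in~$H_S$, which forces~$\dim Q=0$, i.e.~$\mathfrak{i}=0$.

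For the central part, I would show $\Fu_z=\Fz(\Fh)$. Under our hypotheses, the three parts of Proposition~\ref{Prop E type fini abelien} combine to give that $\pi(Z(U_S))$ is open in $M^\ab(\QQ_S)$, where $\pi\colon M\to M^\ab$. Since $M$ is reductive, $\pi|_{Z(M)}$ is an isogeny and induces an isomorphism of Lie algebras $\Lie(Z(M))\simeq\Lie(M^\ab)$; consequently $\Lie(Z(U_S))=\Lie(Z(M))$. A direct computation using Zariski density identifies $\Lie(Z(U_S))=\Fu\cap\Fz_{\Fg}(\Fh)=\Fu\cap\Fz(\Fh)=\Fu_z$, while $S$-Tate yields $Z(H_S)\subseteq Z_M(H_S)=Z(M)$ and hence $\Fz(\Fh)\subseteq\Lie(Z(M))$. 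The chain $\Fu_z\subseteq\Fz(\Fh)\subseteq\Lie(Z(M))=\Fu_z$ collapses, giving $\Fu_z=\Fz(\Fh)$ and, combined with the previous paragraph, $\Fu=\Fh$. The main obstacle I anticipate is the semisimple step: one must convert the purely algebraic datum of Zariski density of the compact group $U_S$ into openness along each simple factor, and the reduction to a finite Zariski-dense subgroup is the decisive simplification.
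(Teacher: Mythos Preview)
Your argument is correct, and its overall shape---split~$\Fu$ into a semisimple and a central piece, handle the centre via Proposition~\ref{Prop E type fini abelien} and the $S$-Tate identity, and conclude~$\Fu=\Fh$---matches the paper's proof closely. In particular, your treatment of the central part is essentially identical to the paper's: both establish the sandwich~$\Fu_z\subseteq\Fz(\Fh)\subseteq\Lie Z(M)=\Fu_z$ from the $S$-Tate hypothesis and the openness of~$\pi(U_S)$ in~$M^{\ab}(\QQ_S)$.

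The one genuine difference is the semisimple step. The paper does not argue that~$\Fu_{ss}=\Fh^{\der}$ directly; instead it invokes Chevalley's theory of algebraic Lie subalgebras: the derived subalgebra~$[\Fu,\Fu]$ is \emph{always} algebraic (\cite[Cor.~7.9]{BorelLAG}), and a sum of algebraic subalgebras is algebraic (\cite[Cor.~7.7]{BorelLAG}), so once~$\Fz(\Fu)=\Fz_M$ is known to be algebraic one is done. Your route---pass to the semisimple quotient~$Q$, observe that the image of the compact~$U_S$ has trivial Lie algebra hence is finite, yet is Zariski dense, forcing~$\dim Q=0$---avoids these references and is entirely self-contained. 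It also makes explicit the preliminary fact that~$\Fu$ is an ideal of~$\Fh$ (and hence a reductive Lie algebra), which the paper asserts in one line but which your Zariski-density argument for~$\mathrm{Ad}(H_S)$-stability justifies cleanly. So your version trades a citation for a short dynamical argument; both are short, and yours is arguably more transparent.
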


\begin{proof}We first use the $S$-semisimplicity to note that the adjoint action of~$\Fu$ on itself is semisimple,
that is~$\Fu$ is a reductive Lie algebra by the definition used in~\cite[I \S6.4 D\'{e}f.~4]{Lie1}. 
It follows that  we may decompose~$\Fu$ as a direct sum~$\Fu=[\Fu,\Fu]+\Fz$ of its derived Lie algebra~$[\Fu,\Fu]$ and its centre~$\Fz$, by~\cite[Cor. (b) to Prop. 5]{Lie1}. It is enough to show that both~$[\Fu,\Fu]$ and~$\Fz$ are algebraic by~\cite[Cor. 7.7 (1) and (3)]{BorelLAG}. The derived Lie algebra~$[\Fu,\Fu]$ is algebraic by~\cite[Cor. 7.9]{BorelLAG}. 

It remains to prove that~$\Fz$ is an algebraic Lie subalgebra. 
We use the $S$-Tate type property, to note that the centraliser of~$U$ is included
in the  centraliser of~$M$. We infer~$\mathfrak{z}\subseteq\mathfrak{z}_M(\QQ_S)$. As~$E$ is of finite type, 
we may apply Prop.~\ref{Prop E type fini abelien}.
we have seen  that~$U_S$ contain an open subgroup of the centre of~$M(\QQ_S)$. We have
conversely~$\mathfrak{z}\supseteq \mathfrak{z}_M(\QQ_S)$. Finally,~$\mathfrak{z}$ is the algebraic lie subalgebra~$\mathfrak{z}_M$.
\end{proof}
\subsubsection*{N.B.:} This algebraicity statement is similar to Bogomolov's algebraicity result~\cite[Th.~1]{Bogo} for abelian varieties. The reduction to the case of abelian Lie algebras is very similar. We rely on $S$-Tate property and the theory of canonical models to treat the abelian case.

For similar algebraicity results see~\cite[133. Th. p.~4]{SerreOEuvres4}, and notably its subsequent Corollary.


\subsubsection{Characterisation of the $S$-Shafarevich property.}
We prove that~$S$-Shafarevich property~\ref{ProprietesGaloisiennes}\,\eqref{S-Sha} is equivalent to the conjonction of the $S$-semi-simplicity property~\ref{ProprietesGaloisiennes}\,\eqref{S-semisimple} and a weakening or the~$S$-Tate property (as defined in~\ref{ProprietesGaloisiennes}\,\eqref{S-Tate}). This is amounts to a group theoretic characterisation of the~$S$-Shafarevich property, which is essential for proving the main theorems of this paper and which, we believe, is also of independent interest.

%



\begin{prop} \label{charSha} Let~$s$ be a point of~$\Sh_K(G,X)$ and~$E$ a field of definition of~$s$ such that the associated $S$-adic monodromy representation~\[\rho:\Gal\left(\ol{E}/E\right)\to M(\AAA_f)\cap G_S\] is defined, and the image~$U_S$ of~$\rho$ is Zariski-connected. 
Let~$Z_{G_S}(M)$ and~$Z_{G_S}(U_S)$ denote the centraliser in~$G_S$ of the Mumford-Tate group~$M$ of~$s$ and of the image~$U_S$ of~$\rho$ respectively.

The point~$s$ is of $S$-Shafarevich type if and only if it satisfies $S$-semisimplicity and, furthermore,~$Z_{G_S}(M) \backslash Z_{G_S}(U_S)$ is compact.

In particular the $S$-Shafarevich property is implied by the conjunction of $S$-semisimplicity and $S$-Tate properties.
\end{prop}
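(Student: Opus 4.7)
The plan is to translate the $S$-Shafarevich property into a group-theoretic condition via the parametrisation of $\Hcal_S(s)$ by the double coset space $\Lambda' \backslash G_S / K_S$, where $\Lambda' := t_S^{-1} \Lambda_h t_S$ and $\Lambda_h := G(\QQ) \cap t^{(S^c)} K^{(S^c)} (t^{(S^c)})^{-1} \cap \mathrm{Stab}_{G(\QQ)}(h)$ lies inside $Z_G(M)(\QQ)$ (since any $\gamma \in G(\QQ)$ fixing $h$ centralises the $\QQ$-Zariski closure $M$ of $h(\SSS)$). Under this parametrisation the Galois action becomes left multiplication by $W_S := t_S^{-1} U_S t_S$, and the class $[g]$ is $F$-rational for a finite extension $F$ of $E$ precisely when $W_S^F := t_S^{-1} \rho_{s,S}(\Gal(\ol{F}/F)) t_S$ is contained in $\Lambda' \cdot g K_S g^{-1}$.

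For the direction ``$\Rightarrow$'' I would argue contrapositively. If $Z_{G_S}(M) \backslash Z_{G_S}(U_S)$ is not compact, select a sequence $(z_n)$ in $Z_{G_S}(U_S)$ diverging modulo $Z_{G_S}(M)$, and consider $[t_S^{-1} z_n t_S] \in \Lambda' \backslash G_S / K_S$. Since $t_S^{-1}z_n t_S$ centralises $W_S$ and normalises $\Lambda'$ modulo $K_S$ (because $\Lambda' \subseteq t_S^{-1} Z_{G_S}(M) t_S$ and $z_n$ lies in the larger centraliser $Z_{G_S}(U_S) \supseteq Z_{G_S}(M)$), the $E$-rationality of $s$ itself (expressed as $W_S \subseteq \Lambda' K_S$) propagates to each $[t_S^{-1} z_n t_S]$; the divergence then yields infinitely many pairwise-distinct $E$-rational classes, violating $S$-Shafarevich. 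If instead $H_S$ is not reductive, its non-trivial unipotent radical $R$ together with the non-compactness of $R(\QQ_S)$ allow one to exhibit, for a fixed $F$, infinitely many $F$-rational translates of $s$ through a cocycle-type deformation along $U_S \cap R(\QQ_S)$, again contradicting $S$-Shafarevich.

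For the direction ``$\Leftarrow$'', assume both hypotheses. By the algebraicity proposition of the previous subsection, $U_S$ is open in $H_S(\QQ_S)$, so $W_S^F$ is an open and Zariski-dense subgroup of the reductive $\QQ_S$-group $H_S^{(t_S)} := t_S^{-1} H_S t_S$. Given an $F$-rational $[g]$, the inclusion $W_S^F \subseteq \Lambda' \cdot g K_S g^{-1}$, with $\Lambda'$ discrete and $W_S^F$ compact, reduces after a finite extension of $F$ to the cleaner inclusion $g^{-1} W_S^F g \subseteq K_S$; Zariski density then shows that $g^{-1} H_S^{(t_S)}(\QQ_S) g$ and $K_S$ share a compact open subgroup of full Lie algebra. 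A rigidity argument (reductive $\QQ_S$-subgroups of $G_S$ sharing a compact open subgroup with $K_S$ form a bounded class, ultimately via the compactness of fundamental domains for $K_S$ in the Bruhat--Tits building of $H_S^{(t_S)}$) constrains $g$ to lie in $Z_{G_S}(W_S) \cdot K_0$ for a fixed compact set $K_0 \subseteq G_S$. The compactness of $Z_{G_S}(M) \backslash Z_{G_S}(U_S)$, together with $\Lambda' \subseteq t_S^{-1} Z_{G_S}(M) t_S$, then makes the double coset space $\Lambda' \backslash Z_{G_S}(W_S) K_0 / K_S$ finite, which is exactly the desired finiteness of $F$-rational points.

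The main obstacle will be the rigidity step in the ``$\Leftarrow$'' direction: proving that the reductive $\QQ_S$-subgroups of $G_S$ sharing a compact open subgroup with $K_S$ form, up to $K_S$-conjugation and finite ambiguity, a single conjugacy class, thereby reducing the location of $g$ to the centraliser $Z_{G_S}(W_S)$ modulo a compact correction. The unipotent case in the ``$\Rightarrow$'' direction is a parallel but distinct technical challenge, exploiting the non-compactness of unipotent $p$-adic groups to produce rational-point proliferation. The final ``In particular'' follows immediately: the $S$-Tate property gives $Z_{G_S}(H_S^0) = Z_{G_S}(M)$, hence $Z_{G_S}(U_S) = Z_{G_S}(H_S) = Z_{G_S}(M)$ (using connectedness of $H_S$), making the quotient trivial and hence compact.
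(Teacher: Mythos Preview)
Your overall strategy matches the paper's: parametrise $\Hcal_S(s)$ by a double coset space, identify $F$-rationality with a containment condition, and for ``$\Leftarrow$'' bound the location of $g$ by a transporter-type argument. However, two steps in your ``$\Leftarrow$'' direction do not go through as written, and your treatment of the non-reductive case in ``$\Rightarrow$'' is too vague to succeed.

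First, you invoke the algebraicity proposition to get $U_S$ open in $H_S(\QQ_S)$, but that proposition requires both the $S$-Tate hypothesis and that $E$ be of finite type over~$\QQ$; neither is assumed here. The paper avoids algebraicity entirely: its Lemma~\ref{stab} uses only Zariski density of (a suitable subset of) $U_S$ in the reductive~$H_S$, together with the cited result \cite[Lemma~D.2]{RU} that the transporter $\{t:t^{-1}Vt\subseteq K\}$ of a Zariski-dense subset $V$ of a reductive group into a compact set is contained in $Z_{G_S}(U_S)\cdot C$ for some compact~$C$.

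Second, and more seriously, your passage ``with $\Lambda'$ discrete and $W_S^F$ compact, reduces after a finite extension of $F$ to the cleaner inclusion $g^{-1}W_S^F g\subseteq K_S$'' hides a dependence on~$g$: for each $u\in U_S$ you get $qu\in gKg^{-1}$ with some $q\in Z_G(M)(\QQ)$ depending on both $u$ and $g$, and there is no reason the required extension of $F$ killing all such $q$ should be uniform in~$g$. The paper handles this with a torsion argument: since $q$ commutes with $u$ one has $q^n=u^{-n}\cdot gk^ng^{-1}$, forcing the powers of $q$ into the compact-times-discrete set $U_S\cdot gKg^{-1}\cap Z_G(M)(\QQ)$, hence $q$ is torsion of order dividing a fixed $N$ (Lemma~\ref{Lemme phi}). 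Then $g^{-1}u^Ng\in K$ for all $u$, and $\{u^N:u\in U_S\}$ remains Zariski dense in $H_S$. This is the missing idea.

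For the non-reductive half of ``$\Rightarrow$'', your proposed ``cocycle-type deformation along $U_S\cap R(\QQ_S)$'' is not viable: there is no reason $U_S$ should meet the unipotent radical $R$ of $H_S$ nontrivially (consider a one-parameter subgroup of a Borel meeting neither the torus nor the unipotent radical). The paper instead applies Borel--Tits to place $H_S$ inside a parabolic $P$ with $R\subseteq \Radu(P)$, chooses a cocharacter $y:\mathbf{G}_m\to G_S$ contracting $\Radu(P)$, and shows that the points $\ol{(h,y(t))}$ for $t\to\infty$ are infinitely many and all defined over a single finite extension (because $\mathrm{Ad}_{y(t)}(U_S)$ stays bounded as it converges to the Levi projection). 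This is a genuinely different mechanism from what you sketch.

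Your treatment of the cocompactness half of ``$\Rightarrow$'' and of the final ``In particular'' are essentially correct and agree with the paper.
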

We will split the proof into several steps.
Let us consider a sequence $s_n = \overline{(h,g_n)}$ with $g_n \in G_S$
of points  in the~$S$-Hecke orbit~$\mathcal{H}_S(s)$ of~$s$. 

We start with an easy Lemma about the homogeneous structure of the $S$-Hecke orbit~$\Hcal_S(s)$.
\begin{lem}\label{Lemme Hecke orbite Z}We embed~$Z_G(M)(\QQ)$ into~$G(\AAA_f)$ (at the finite places \emph{only}) and~$G(\QQ)$ into~$G(\AAA)$. Then the application
\begin{multline*}
Z_G(M)(\QQ)\backslash Z_G(M)(\QQ)\cdot G_S\cdot K/K
\\\to \Hcal_S(s)=G(\QQ)\backslash G(\QQ)\cdot (\{h\}\times G_S)\cdot K/K,
\end{multline*}
which maps the double class of~$g$ to~$\ol{(h,g)}$, is a bijection.
\end{lem}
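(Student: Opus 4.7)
The plan is to reduce the statement to the standard double-coset description of $\Sh_K(G,X)$, combined with the identification of the stabiliser of $h$ in $G(\QQ)$ under the $G(\QQ)$-action on $X$. After a harmless adjustment (conjugating the component of $K$ outside $S$ by the corresponding component of $t$) one may assume $t=1$, so that $\Hcal_S(s)$ becomes the image of $\{h\}\times G_S$ in $\Sh_K(G,X)$, and the proposed map $g\mapsto \ol{(h,g)}$ is set-theoretically well-posed.

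The key algebraic input, which I would establish first, is the identity
\[
\mathrm{Stab}_{G(\QQ)}(h) = Z_G(M)(\QQ).
\]
The inclusion $\supseteq$ is immediate, since elements of $Z_G(M)$ commute with $h(\SSS)\subseteq M_\RR$. Conversely, if $\gamma\in G(\QQ)$ fixes $h$, then $h(\SSS)$ lies in the real points of the $\QQ$-algebraic subgroup $Z_G(\gamma)\subseteq G$; the minimality property defining the Mumford-Tate group $M$ of $h$ forces $M\subseteq Z_G(\gamma)$, whence $\gamma\in Z_G(M)(\QQ)$.

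With the stabiliser identified, I would verify that the map descends to the claimed double quotient and is bijective onto $\Hcal_S(s)$. Right-$K$ invariance is built into the definition of $\Sh_K(G,X)$; for left-$Z_G(M)(\QQ)$ invariance, if $z\in Z_G(M)(\QQ)$, the diagonal $G(\QQ)$-action by $z^{-1}$ gives $(h,zg)\sim (z^{-1}h,g)=(h,g)$. Surjectivity is then tautological. For injectivity, an equality $\ol{(h,g_1)}=\ol{(h,g_2)}$ in $\Sh_K(G,X)$ unwinds into the existence of $\gamma\in G(\QQ)$ and $k\in K$ with $\gamma h=h$ and $g_1=\gamma g_2 k$; the first condition places $\gamma$ in $Z_G(M)(\QQ)$, and the second exhibits $g_1$ and $g_2$ as representatives of a single double coset. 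The only substantive obstacle is the stabiliser identification, which is the standard consequence of the defining minimality of the Mumford-Tate group; everything else is formal bookkeeping with double cosets.
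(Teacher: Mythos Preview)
Your proposal is correct and follows essentially the same route as the paper: surjectivity is immediate from the definition of~$\Hcal_S(s)$, and injectivity is obtained by unwinding the double-coset equality~$\ol{(h,g)}=\ol{(h,g')}$ to~$q\cdot(h,g)\cdot k=(h,g')$, then observing that~$qh=h$ with~$q\in G(\QQ)$ forces~$q\in Z_G(M)(\QQ)$ via the minimality of the Mumford-Tate group. You are slightly more explicit than the paper in checking well-definedness and in articulating the reduction to~$t=1$, but the argument is the same.
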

\begin{proof} The surjectivity is immediate, by the very definition of~$\Hcal_S(s)$. We prove the injectivity. We start with the identity~$\ol{(h,g)}=\ol{(h,g')}$. Equivalently there is~$q$ in~$G(\QQ)$ and~$k$ in~$K$ such that
\[
q\cdot(h,g)\cdot k=(h,g').
\]
From~$qh=h$, we infer that~$q$ is in the stabiliser of~$h$. As~$q$ is in~$G(\QQ)$, it belongs to the biggest~$\QQ$-subgroup in the stabiliser of~$h$, which is~$Z_G(M)(\QQ)$. We conclude by observing
\[
q\cdot g\cdot k= g'.\qedhere
\]
\end{proof}

In the following Lemma, we show how the $S$-simplicity hypothesis can be used to work out a group theoretic condition on $g_n$ for~$s_n$ to be defined over $E$.
\begin{lem}\label{stab}
Assume that the $S$-simplicity holds.

There exists a compact subset $C \subset G_S$ such that if a point~$\ol{(h,g)}$ of~$\Hcal_S(s)$ is defined over~$E$, then
$$
g \in Z_{G_S}(U_S)\cdot C.
$$
\end{lem}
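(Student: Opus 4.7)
The strategy is to convert the $E$-rationality of $\ol{(h,g)}$ into an explicit conjugation relation on $g$, then exploit $S$-semisimplicity to pin $g$ modulo $Z_{G_S}(U_S)$. First, I unpack the condition via \eqref{Galois carac}: the equality $\sigma(\ol{(h,g)}) = \ol{(h,\rho_{h,S}(\sigma)\cdot g)} = \ol{(h,g)}$ for all $\sigma \in \Gal(\ol E/E)$ gives, by Lemma \ref{Lemme Hecke orbite Z}, elements $q_\sigma \in Z_G(M)(\QQ)$ and $k_\sigma \in K$ with
\[
	q_\sigma \cdot g \cdot k_\sigma \;=\; \rho_{h,S}(\sigma) \cdot g \qquad \text{in } G(\AAA_f).
\]
Splitting $K = K_S \times K^S$ and using that both $g$ and $\rho_{h,S}(\sigma)$ are concentrated at the places of $S$, the prime-to-$S$ equation forces $q_\sigma$ to lie in the $S$-arithmetic subgroup $\Lambda := Z_G(M)(\QQ) \cap K^S$ of $Z_G(M)$, while the $S$-component rearranges to $q_\sigma^{-1}\rho_{h,S}(\sigma) = g k_{\sigma,S}^{-1} g^{-1} \in g K_S g^{-1}$. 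Letting $\sigma$ vary (so $\rho_{h,S}(\sigma)$ exhausts $U_S$) yields the key inclusion
\[
	U_S \;\subseteq\; \Lambda \cdot g K_S g^{-1}, \qquad \text{equivalently,} \qquad g^{-1}\bigl(\Lambda^{-1} U_S\bigr) g \;\subseteq\; K_S.
\]
Because $U_S \subseteq M$, we have $\Lambda \subseteq Z_G(M) \subseteq Z_{G_S}(M) \subseteq Z_{G_S}(U_S)$; in particular each $q_\sigma$ commutes with $\rho_{h,S}(\sigma)$, and $\Lambda^{-1} U_S$ sits in the subgroup $Z_{G_S}(U_S)\cdot U_S$ of $G_S$.

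Now I invoke $S$-semisimplicity: the Zariski closure $H_S$ of $U_S$ is reductive, so every element of $U_S$ is semisimple and its $G_S$-conjugacy class is Zariski closed. By Zariski density of $U_S$ in $H_S^0$, pick finitely many elements $u_1,\ldots,u_N \in U_S$ whose joint $G_S$-centraliser is exactly $Z_{G_S}(H_S) = Z_{G_S}(U_S)$. Applying the key inclusion to each $u_i$ produces $\lambda_i \in \Lambda$ and $k_i \in K_S$ with $a_i := \lambda_i^{-1} u_i = g k_i g^{-1}$. Each $a_i$ is semisimple as a product of two commuting semisimple elements of the reductive group $Z_{G_S}(U_S)\cdot H_S$, and since $\lambda_i \in Z_{G_S}(U_S)$, the tuple $(a_i)_i$ has the same simultaneous centraliser $Z_{G_S}(U_S)$ as $(u_i)_i$. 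The simultaneous conjugation map
\[
	\Psi\colon Z_{G_S}(U_S) \backslash G_S \;\hookrightarrow\; G_S^N, \qquad [x] \mapsto \bigl(x^{-1} a_1 x,\ldots,x^{-1} a_N x\bigr),
\]
is then a closed embedding, because the $G_S$-conjugacy classes of the semisimple $a_i$'s are Zariski closed in $G_S$. The conditions $g^{-1} a_i g = k_i \in K_S$ place $\Psi([g])$ in the compact set $K_S^N$, so $[g]$ lies in the compact preimage $\Psi^{-1}(K_S^N)$; choosing a compact lift $C \subset G_S$ of this preimage (depending only on $U_S$, $\Lambda$ and $K_S$) yields $g \in Z_{G_S}(U_S) \cdot C$ as desired.

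The \textbf{main obstacle} is the properness of $\Psi$: one must promote the compactness of $\Psi([g])$ to that of the coset $[g] \in Z_{G_S}(U_S)\backslash G_S$. This is exactly where $S$-semisimplicity is indispensable, as it guarantees the Zariski-closedness of the conjugacy classes of the $a_i$; if $U_S$ could contain non-trivial unipotent elements, these orbits would not be closed and no such compact bound on $g$ would follow. A secondary subtlety is that the elements $a_i$ depend on the choice of the $\lambda_i$, hence a priori on $\ol{(h,g)}$ itself; however, the admissible $a_i$ always lie in the fixed set $\Lambda^{-1} u_i$, and any alternative choice alters $[g]$ only by an element of the bounded intersection $\Lambda \cap g K_S g^{-1} \subseteq Z_{G_S}(U_S)$, permitting a $C$ independent of the particular $E$-rational point.
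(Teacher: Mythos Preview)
Your overall strategy---translate $E$-rationality into a conjugation constraint on $g$, then bound $g$ modulo $Z_{G_S}(U_S)$ via properness of an orbit map---parallels the paper's, but the execution has genuine gaps at the crucial step.

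First, the assertion that ``every element of $U_S$ is semisimple'' because $H_S$ is reductive is false: compact open subgroups of $p$-adic reductive groups (e.g.\ $\GL_2(\ZZ_p)$) contain unipotent elements. You could repair this by choosing the $u_i$ to be torsion (torsion elements are dense in a profinite group and semisimple in characteristic~$0$), but this alone does not save the argument.

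Second, and more seriously, the claim that the simultaneous centraliser of the tuple $(a_i)_i=(\lambda_i^{-1}u_i)_i$ equals $Z_{G_S}(U_S)$ is unjustified: the $\lambda_i$ lie in $Z_{G_S}(U_S)$, which need not be abelian, so an element centralising every $u_i$ need not centralise the $\lambda_i$, and conversely. More fundamentally, properness of your map $\Psi$ requires the \emph{simultaneous} conjugacy orbit of $(a_1,\ldots,a_N)$ to be closed; by Richardson's criterion this demands that the Zariski closure of $\langle a_1,\ldots,a_N\rangle$ be reductive. Since the $a_i$ depend on $g$ through the unbounded, uncontrolled elements $\lambda_i\in\Lambda$, you have no handle on this closure, and your closing paragraph does not address it: the set $\Lambda^{-1}u_i$ is infinite and $\Psi$ itself varies with the choice of $(a_i)$, so $\Psi^{-1}(K_S^N)$ is not a fixed compact.

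The paper circumvents all of this with a short torsion trick that your argument is missing. From $qu=gkg^{-1}$ with $q\in Z_G(M)(\QQ)$ one observes, using that $q$ commutes with $u$, that every power $q^n=u^{-n}\cdot gk^ng^{-1}$ lies in the compact set $U_S\cdot gKg^{-1}$ and simultaneously in the discrete set $Z_G(M)(\QQ)$; hence $q$ is torsion, of order dividing a uniform $N$ (Lemma~\ref{Lemme phi}). This yields $g^{-1}u^Ng\in K$ for \emph{every} $u\in U_S$, so $g$ lies in the transporter of the fixed set $V=\{u^N:u\in U_S\}$ into $K$. Since $V$ is Zariski dense in the reductive group $H_S$ and is independent of $g$, one then invokes a transporter lemma (\cite[Lemma~D.2]{RU}) to conclude $T\subseteq Z_{G_S}(U_S)\cdot C$. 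The elimination of the rational cofactor $q$ via uniform torsion is precisely the idea your argument lacks.
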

\begin{proof} The point~$\ol{(h,g)}$ is defined over~$E$ if and only if, for every element~$u$ of~$U_S$, we have
\[
\overline{(h, u g)} = \overline{(h, g)}.
\]
This means that there exists a $q$ in $G(\QQ)$ and $k$ in $K$, depending on~$u$, such that
\begin{eqnarray*}
q h = h,&\text{ i.e. }&q \in Z_G(M)(\QQ),\\
\text{ and }q u g = g k,&\text{ i.e. }&qu = g k g^{-1}.
\end{eqnarray*}
Thus~$qu$ belongs to the group~$gKg^{-1}$. Any power~$(qu)^n$ belongs to the same group.
As~$q$ centralises~$U_S$, we have~$(qu)^n=u^nq^n$. It follows
\[
q^n=u^{-n}\cdot gk^ng^{-1}.
\]
In particular all powers of~$q$ belongs to the compact set~$U_S\cdot gKg^{-1}$. They also
belong to the discrete set~$Z_G(M)(\QQ)$ in~$G(\AAA_f)$. It must be that~$q$ is a torsion element of~$Z_G(M)(\QQ)$.

Actually all torsion elements of~$Z_G(M)(\QQ)$ satisfy~$q^N=1$ for a uniform order~$N>0$: we may embed~$Z_G(M)$ in a linear group~$GL(D)$ and apply Lemma~\ref{Lemme phi}. We hence have
\begin{equation}\label{eq transporteur}
u^N=q^Nu^N=gk^Ng^{-1},\text{ whence }g^{-1}u^Ng\in K.
\end{equation}
Let~$V=\{u^N|u\in U\}$. This is a neighbourhood of the neutral element in~$U$, as the~$N$-th power map has a non-zero differential at the origin: it is the multiplication by~$N$ map on the Lie algebra. It follows that~$V$ is Zariski dense in~$U$ (recall that~$U$ is Zariski connected).

We deduce from~\eqref{eq transporteur} above that~$g$ belongs to the transporteur, for the conjugation right-action, of~$V$ to~$K$
\[
T=\{t\in G_S|t^{-1}Vt\subseteq K\}.
\]

The Zariski closed subgroup generated by~$V$ is the same as the one generated by~$U$, and is a reductive group by hypothesis. This is the essential hypothesis we need to invoke~\cite[Lemma D.2]{RU},  according to which there exists a compact subset $C$ of $G_S$ such that
\[
g\in T \subseteq Z_{G_S}(U_S)\cdot C. 
\]
We are done, but from the fact that actually \emph{loc. cit.} works only for one ultrametric place at a time. But arguing with the projections~$G_p$, $U_p$, $V_p$, $K_p$ and~$T_p$ of~$G_S$, $U_S$, $V$,~$K$ and~$T$ in~$G_p$ at a some place~$p$ in~$S$, we can prove as above that
\[
T_p\subseteq \{t\in G_p|t^{-1}V_p t\subseteq K_p\}\subseteq Z_{G_p}(U_p)\cdot C_p
\] 
for a compact subset~$C_p$ of~$G_p$, and conclude, with~$C$ the product compact set
\[
T\subseteq \prod_{p\in S} T_p\subseteq\prod_{p\in S} Z_{G_p}(U_p)\cdot C_p=Z_{G_S}(U_S)\cdot \prod_{p\in S} C_p=Z_{G_S}(U_S)\cdot C.\qedhere
\]
\end{proof}
The following standard fact was used in the preceding proof.
\begin{lem}\label{Lemme phi} 
Consider a general linear group~$GL(D,\QQ)$ over the field~$\QQ$ of rational numbers. Then there is an integer~$N(D)$ such that for any~$g$ in~$GL(D,\QQ)$ of finite order its power~$g^{N(D)}$ is the neutral element.
\end{lem}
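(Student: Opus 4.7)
The plan is to exhibit $N(D)$ explicitly in terms of $n := \dim_\QQ D$ using the classical fact that $\varphi(d) \to \infty$ and the factorisation of cyclotomic polynomials over $\QQ$.

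First, let $g \in GL(D, \QQ)$ be of finite order $N$. Then $g$ is a root of $X^N - 1$, so its minimal polynomial $m_g(X) \in \QQ[X]$ divides $X^N - 1$. Since
\[
X^N - 1 = \prod_{d \mid N} \Phi_d(X)
\]
is the factorisation of $X^N - 1$ into distinct irreducible factors over $\QQ$, the minimal polynomial $m_g$ is a product of distinct cyclotomic polynomials $\Phi_d$ for $d$ ranging over some subset $\mathcal{D} \subseteq \{d : d \mid N\}$. One checks that the order of $g$ equals $\text{lcm}(\mathcal{D})$: indeed, $g^k = 1$ if and only if $m_g(X) \mid X^k - 1$, which happens precisely when $d \mid k$ for every $d \in \mathcal{D}$.

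Next, I would use the degree constraint. Since $m_g$ divides the characteristic polynomial of $g$, we have $\deg m_g \leq n$; therefore
\[
\sum_{d \in \mathcal{D}} \varphi(d) = \deg m_g \leq n,
\]
so in particular $\varphi(d) \leq n$ for every $d \in \mathcal{D}$. The key fact is that the set $\Delta_n := \{d \in \NN : \varphi(d) \leq n\}$ is \emph{finite}: this follows from the classical lower bound $\varphi(d) \gg d / \log\log d$, or more elementarily from the fact that if $d = \prod p_i^{a_i}$ then $\varphi(d) \geq \prod (p_i - 1) \cdot \prod p_i^{a_i - 1}$, which forces both the prime factors of $d$ and their multiplicities to be bounded in terms of $n$.

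Finally, I would define
\[
N(D) := \text{lcm}(\Delta_n).
\]
By the previous step this is a well-defined positive integer depending only on $n = \dim D$. For any torsion $g$, we have $\mathcal{D} \subseteq \Delta_n$, hence the order $\text{lcm}(\mathcal{D})$ of $g$ divides $N(D)$, and therefore $g^{N(D)} = 1$. There is no real obstacle here: the only point requiring a moment of care is the finiteness of $\Delta_n$, which I would either cite from a standard reference on the Euler totient function, or prove in two lines by the elementary estimate above.
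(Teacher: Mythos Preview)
Your proof is correct and follows essentially the same route as the paper: both use irreducibility of cyclotomic polynomials over~$\QQ$ to bound~$\varphi(d)\le n$ for every order~$d$ of an eigenvalue, and then invoke~$\varphi(d)\to\infty$ to get a finite set of possible~$d$. Your choice~$N(D)=\mathrm{lcm}(\Delta_n)$ is in fact sharper than the paper's~$N(D)=N!$ (where~$N=\max\Delta_n$), and your conclusion via the order dividing~$N(D)$ is slightly more direct than the paper's detour through ``$g^{N!}$ is unipotent and of finite order, hence trivial''.
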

\begin{proof}Let~$g$ be a torsion element. Every complex eigenvalue of~$g$ is some root of unity~$\zeta$. Let~$d$ be the order of~$\zeta$. As the cyclotomic polynomials are irreducible over~$\QQ$, it must be that~$g$ has at least~$\phi(d)$ eigenvalues, the algebraic conjugates of~$\zeta$. We hence have~$\phi(d)\leq D$.

It is known that~$\phi(n)$ diverges to infinity as~$n$ diverges to infinity (one has~$n^{1-\varepsilon}=o(\phi(n))$ for instance).
There is a largest integer~$N$ such that~$\phi(N)\leq D$.

We have necessarily~$d\leq N$. It follows~$d|N!$, and hence~$\zeta^{N!}=1$. The only eigenvalue of~$g^{N!}$ is~$1$.
The power~$g^{N!}$ is unipotent. It is also of finite order, and we must have that~$g^{N!}$ is the neutral element. TWe can take~$N(D)=N!$.
\end{proof}
We now prove one implication in the second part of Proposition~\ref{charSha}.
\begin{lem}
Assume now that $S$-simplicity holds and that 
$Z_{G_S}(M)$ is cocompact in~$Z_{G_S}(U_S)$.
Then~$s$ is of $S$-Shafarevich type.
\end{lem}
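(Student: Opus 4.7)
The plan is to count $F$-rational points in the $S$-Hecke orbit for any finite extension $F/E$ via the double coset description of $\mathcal{H}_S(s)$, reducing the finiteness to a compactness statement for a quotient of $Z(\QQ_S)$, where $Z := Z_G(M)$.

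First I would repeat the bijection of Lemma~\ref{Lemme Hecke orbite Z} in a slightly sharper form: unpacking the equivalence $\overline{(h,g_S)}=\overline{(h,g_S')}$ yields
\[
\mathcal{H}_S(s) \;\cong\; \Gamma\backslash G_S/K_S,
\]
where $\Gamma := Z(\QQ)\cap K^S$ is an $S$-arithmetic subgroup of $Z(\QQ)$, viewed inside $G_S$ via its $S$-adic component. By Lemma~\ref{stab} (which uses the $S$-simplicity hypothesis), an element $g\in G_S$ yielding an $E$-rational point of $\mathcal{H}_S(s)$ must satisfy $g\in Z_{G_S}(U_S)\cdot C$ for some fixed compact $C\subset G_S$. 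Using the cocompactness assumption $Z_{G_S}(U_S)=Z_{G_S}(M)\cdot C_0=Z(\QQ_S)\cdot C_0$ with $C_0\subset Z_{G_S}(U_S)$ compact, the set of $E$-rational points is contained in the image of $Z(\QQ_S)\cdot C^*$ in $\Gamma\backslash G_S/K_S$, where $C^*:=C_0\cdot C$ is compact in $G_S$.

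The key input will be that $Z=Z_G(M)$ is $\QQ$-anisotropic. Since $h\colon\mathbb{S}\to G_\RR$ factors through $M_\RR$, any element of $Z(\RR)$ fixes $h$, so $Z(\RR)$ is contained in the compact stabilizer $K_h$; a nontrivial $\QQ$-split subtorus of $Z$ would have non-compact real points, forcing $Z(\RR)$ to be non-compact, a contradiction. By Godement's criterion (and its $S$-adic refinement), $\QQ$-anisotropy implies that the $S$-arithmetic group $\Gamma$ is a \emph{cocompact} lattice in $Z(\RR)\times Z(\QQ_S)$. Combined with the compactness of $Z(\RR)$, this gives that $\Gamma\backslash Z(\QQ_S)$ is compact.

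I would then finish by a cheap topological argument: the image of $(\Gamma\backslash Z(\QQ_S))\times C^*$ under the multiplication map into $\Gamma\backslash G_S$ is compact, and since $K_S$ is an \emph{open} subgroup of $G_S$, the coset space $\Gamma\backslash G_S/K_S$ is discrete. A compact subset of a discrete space is finite, so the $E$-rational points of $\mathcal{H}_S(s)$ are finitely many. To upgrade this to arbitrary finite extensions $F/E$, I would replace $U_S$ by the finite-index subgroup $U_{S,F}:=\rho\bigl(\Gal(\ol E/F)\bigr)$; since $U_S$ is Zariski-connected, a finite-index subgroup remains Zariski-dense in $H_S$, so $Z_{G_S}(U_{S,F})=Z_{G_S}(U_S)$ and the cocompactness hypothesis is preserved. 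The very same argument then yields finiteness over $F$, which is the $S$-Shafarevich property. The main delicate point is the verification of $\QQ$-anisotropy of $Z$ and the resulting compactness of $\Gamma\backslash Z(\QQ_S)$: without it, the image of $Z(\QQ_S)\cdot C^*$ in $\Gamma\backslash G_S/K_S$ would a priori be infinite, and the whole argument collapses.
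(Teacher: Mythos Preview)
Your overall strategy mirrors the paper's closely: invoke Lemma~\ref{stab} to confine the $E$-rational points to $Z_{G_S}(U_S)\cdot C$, absorb the cocompactness hypothesis to reduce to $Z(\QQ_S)\cdot C^*$, and then appeal to a Godement-type compactness statement for $Z=Z_G(M)$ to conclude finiteness in the double-coset space. The only cosmetic difference is that the paper works adelically with $Z(\QQ)\backslash Z(\AAA_f)$ (and the description of $\Hcal_S(s)$ from Lemma~\ref{Lemme Hecke orbite Z}), whereas you rewrite $\Hcal_S(s)$ as $\Gamma\backslash G_S/K_S$ with $\Gamma=Z(\QQ)\cap K^S$ and aim for compactness of $\Gamma\backslash Z(\QQ_S)$. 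These two formulations are equivalent after quotienting by the compact open subgroup $K^S\cap Z(\AAA_f^S)$.

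There is, however, a genuine gap in your justification of the key compactness step. You assert that the stabiliser $K_h$ of $h$ in $G(\RR)$ is compact, and deduce that $Z(\RR)$ is compact and hence $Z$ is $\QQ$-anisotropic. This is false whenever the centre $Z(G)$ of $G$ contains a $\QQ$-split torus: for instance when $G=\GSp_{2g}$ one has $Z(G)\cong\GG_m$, so $K_h\supseteq Z(G)(\RR)\cong\RR^\times$ is non-compact, and $Z=Z_G(M)\supseteq\GG_m$ is certainly not $\QQ$-anisotropic. What \emph{is} true is that the image of $K_h$ in $G^{\ad}(\RR)$ is compact, so only the quotient $Z/Z(G)$ is $\RR$-anisotropic. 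The paper uses exactly this (``$Z$ is $\RR$-anisotropic up to the centre of~$G$''), together with the paper's standing convention that $G$ is the generic Mumford-Tate group on~$X$, which guarantees that $Z(G)(\QQ)$ is discrete in $G(\AAA_f)$ (Appendix to~\cite{UY1}); these two ingredients combine to give compactness of $Z(\QQ)\backslash Z(\AAA_f)$. From that adelic compactness your desired $S$-arithmetic statement follows, since $\Gamma\backslash Z(\QQ_S)$ embeds as a closed subset of the compact space $Z(\QQ)\backslash Z(\AAA_f)/(K^S\cap Z)$. So your argument can be repaired, but the direct appeal to $\QQ$-anisotropy of $Z$ and to compactness of $K_h$ does not stand as written.
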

\begin{proof}
Let $\overline{(h,g_n)}$ be a sequence of points, in the $S$-Hecke orbit~$\Hcal_S(s)$, defined over a finite extension~$F$ 
of $E$. Our aim is to show this sequence can take at most finitely many distinct values.
After replacing $U_S$ by an open subgroup of finite index, we may assume that~$F=E$, which translates into the property
$$
\forall u\in U_S,~\overline{(h, u\cdot g_n)} = \overline{(h,g_n)}
$$
for all $n$.

Let us write
\begin{subequations}\label{Abbr Z}
\begin{align}
Z&=Z_G(M),&
Z(\QQ_S)&=Z_{G_S}(M),
\end{align}
\begin{equation}
Z(\QQ)=Z_G(M)(\QQ)\subseteq Z(\AAA_f)=Z_G(M)(\AAA_f)
\end{equation}
\end{subequations}

Lemma \ref{stab} shows that elements $g_n$ are contained in 
$T=Z_{G_S}(U_S)\cdot C$ for some compact subset~$C$ of 
$G_S$.
By hypothesis,~$Z_{G_S}(M)\backslash Z_{G_S}(U_S)$ is compact.
By the adelic version of Godements's compactness criterion 
 it is also true that $Z(\QQ) \backslash Z(\AAA_f)$ is compact as well, as~$Z$ is~$\RR$-anisotropic up to the centre of~$G$.

We remark that~$Z(\AAA_f)$ normalises~$Z_{G_S}(U_S)$. As the latter is a place by place product it can be checked place by place: at places in~$S$ the projection of~$Z(\AAA_f)$ is contained in~$Z_{G_S}(M)$ which is itself contained in~$Z_{G_S}(U_S)$; at other places~$Z_{G_S}(U_S)$ has only trivial factors. We hence have a homomorphism
\begin{equation}\label{conversion S vs adelic}
Z_{G_S}(M)\backslash Z_{G_S}(U_S)\longrightarrow{}
Z(\AAA_f)\backslash Z_{G_S}(U_S)\cdot Z(\AAA_f),
\end{equation}
which is surjective, with compact source, hence has compact image.

Incorporating with the compactness of~$Z(\QQ)\backslash Z(\AAA_f)$ we infer the compactness of
\[
Z(\QQ)\backslash Z_{G_S}(U_S)\cdot Z(\AAA_f)
\]
and follows the compactness of the subset
\[
Z(\QQ)\backslash  Z(\AAA_f)\cdot Z_{G_S}(U_S)\cdot C= Z(\QQ)\backslash  Z(\AAA_f)\cdot T
\]
of~$Z(\QQ)\backslash G(\AAA_f)$ (we note that this quotient is separated, as~$Z(\QQ)$ is discrete in~$G(\AAA_f)$).

As~$K$ is open, we deduce that 
\[
 Z(\QQ)\backslash  Z(\AAA_f)\cdot T\cdot K/K
\]
is finite. To sum it up we have
\[
Z(\QQ)\cdot g_n \cdot K\in Z(\QQ)\backslash  Z(\AAA_f)\cdot T\cdot K/K\subseteq  Z(\QQ)\backslash Z(\QQ) T\cdot K/K.
\]
Now, the double coset on the right characterises~$\ol{(h,g_n)}$, by Lemma~\ref{Lemme Hecke orbite Z}.
Finally~$\ol{(h,g_n)}$ can take at most~$\#Z(\QQ)\backslash Z(\QQ) T\cdot K/K$ distinct values.\qedhere
%
%

%
%

\end{proof}

To prove the other inclusion, let us first prove that 
the $S$-Shafarevich property implies $S$-semisimplicity.
This is done in the following lemma.

\begin{lem}
The $S$-Shafarevich property implies the $S$-semisimplicity property, namely that the group $H_S$ is reductive.
\end{lem}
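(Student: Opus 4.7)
I prove the contrapositive: if $H_S$ is not reductive, then $s$ fails the $S$-Shafarevich property. After replacing $E$ by a finite extension I may assume $U_S$ is Zariski-connected. Write $N = \Radu(H_S) \neq 1$ and fix a Levi decomposition $H_S^0 = L \ltimes N$ over $\QQ_S$. The goal is to produce an infinite sequence $g_n \in G_S$ such that the Hecke translates $\overline{(h, g_n)}$ are pairwise distinct $E$-rational points of $\Hcal_S(s)$.

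The key ingredient is a contracting cocharacter. Pick $p \in S$ for which the projection $N_p \subset G(\QQ_p)$ is non-trivial. Applied to $N_p$, normalised by the Levi $L_p$, inside the reductive $\QQ_p$-group $G(\QQ_p)$, the Borel--Tits theorem provides a parabolic subgroup $P \subset G(\QQ_p)$ with $N_p \subset \Radu(P)$ and a Levi $L_P$ of $P$ containing $L_p$. Choose a $\QQ_p$-cocharacter $\lambda \colon \GG_m \to Z(L_P)$ whose dynamical parabolic is $P$, and view $\lambda$ as a cocharacter into $G_S$ trivial away from $p$. Then $\lambda$ centralises $L_P \supset L$, conjugation by $\lambda(t)$ contracts $\Radu(P) \supset N_p$ as $v_p(t) \to +\infty$, and crucially $\lambda \notin Z_G(M)$: otherwise $\lambda$ would centralise $M_{\QQ_S}$, hence $H_S \subset M_{\QQ_S}$, contradicting its non-trivial action on $N_p$. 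Set $g_n = \lambda(p^n)$.

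For $u \in U_S \subset H_S^0(\QQ_S)$ with $p$-component decomposition $u_p = \ell_p \nu_p$ in $L(\QQ_p) \ltimes N(\QQ_p)$, the centralisation property gives $g_n^{-1} u g_n = \ell_p \cdot (g_n^{-1} \nu_p g_n)$ at $p$, and $u_q$ unchanged at $q \neq p$. The factor $\ell_p$ lies in $U_S \cap L(\QQ_p) \subset U_S \subset K_p$; and uniformly over $\nu_p$ in the compact set $U_S \cap N(\QQ_p)$ the contraction drives $g_n^{-1} \nu_p g_n$ into $K_p$ for $n$ large. Thus $g_n^{-1} U_S g_n \subset K$ for all large $n$, which is the sufficient condition (isolated within the proof of Lemma~\ref{stab}, specialised to trivial $Z_G(M)(\QQ)$-shift) for $\overline{(h, g_n)}$ to be $E$-rational. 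By Lemma~\ref{Lemme Hecke orbite Z}, distinctness of $\overline{(h, g_n)}$ reduces to distinctness of the double cosets $Z_G(M)(\QQ) g_n K$; these form a discrete set since $Z_G(M)(\QQ)$ is discrete in $G(\AAA_f)$ and $K$ is compact open, and the unboundedness of $\lambda(p^n)$ along a direction outside $Z_G(M)$ forces infinitely many of them to be distinct. This contradicts the $S$-Shafarevich hypothesis.

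Main obstacle: the delicate point is the simultaneous existence of a cocharacter $\lambda$ that centralises a Levi of $H_S$ \emph{and} contracts its unipotent radical. In the degenerate case where $N$ is central in $H_S$ (so $L$ acts trivially on $N$), one instead takes $\lambda$ inside the reductive centraliser $Z_{G(\QQ_p)}(L_p)$, which still contains $N_p$ as a non-trivial unipotent subgroup and thus admits the required contracting cocharacter. A secondary bookkeeping point is the Godement-style discreteness estimate used at the end, resting on the $\RR$-anisotropy of $Z_G(M)$ modulo the centre of $G$, as established in the appendix to~\cite{UY1}.
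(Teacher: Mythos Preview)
Your strategy coincides with the paper's: argue by contrapositive, invoke Borel--Tits to place~$\Radu(H_S)$ in the unipotent radical of a parabolic~$P$, and use a defining cocharacter of~$P$ to produce an unbounded family of Hecke translates with bounded residue degree. Two steps, however, do not go through as written.

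The claim that~$\ell_p \in U_S \cap L(\QQ_p)$ is false. The Levi projection~$H_S^0(\QQ_p)\to L(\QQ_p)$ sends~$u_p$ to~$\ell_p$, so~$\ell_p$ lies in the \emph{image} of~$U_S$, a compact subset of~$L(\QQ_p)$, but there is no reason for~$\ell_p$ to belong to~$U_S$ itself, nor to~$K_p$. What survives is that the~$\ell_p$ are uniformly bounded; together with the contraction of the~$\nu_p$ this shows the conjugates of~$U_S$ stay in a fixed compact, hence meet only finitely many~$K$-cosets. You therefore obtain not $E$-rationality but a uniform bound on the degree~$[E_n:E]$ of the field of definition of~$\ol{(h,g_n)}$. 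Since~$U_S$ is a compact~$\QQ_S$-Lie group it is topologically finitely generated, so has only finitely many open subgroups of each bounded index; thus the~$E_n$ range over finitely many extensions, and one of them contains infinitely many of the~$\ol{(h,g_n)}$. This is exactly the paper's endgame, and it is the only route once the false inclusion is dropped. (The paper avoids your detour through the Levi decomposition of~$H_S$ altogether: it simply uses~$U_S\subset H_S(\QQ_S)\subset P(\QQ_S)$ and the fact that~$\mathrm{Ad}_{y(t)}$ converges uniformly on compacta of~$P$ to the projection onto the Levi of~$P$.)

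Your conjugation direction is inconsistent: if~$\lambda(t)\,\nu\,\lambda(t)^{-1}\to 1$ as~$v_p(t)\to+\infty$, then~$g_n^{-1}\nu_p g_n=\lambda(p^n)^{-1}\nu_p\lambda(p^n)$ expands, not contracts. Replace~$g_n$ by~$\lambda(p^{-n})$. Separately, the sentence ``$\lambda$ would centralise~$M_{\QQ_S}$, hence~$H_S\subset M_{\QQ_S}$'' is garbled, since the inclusion~$H_S\subset M_{\QQ_S}$ always holds; the intended logic is that~$\lambda\in Z_G(M)$ would force~$\lambda$ to centralise all of~$H_S\ni N_p$, contradicting the nontrivial contraction. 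The ``degenerate case'' paragraph is unnecessary: Borel--Tits gives~$N_{G}(N_p)\subset P$, and since the reductive~$L_p$ normalises~$N_p$ it lies in~$P$ and hence, after conjugation by~$\Radu(P)$, in a Levi~$L_P$; the cocharacter~$\lambda$ into~$Z(L_P)$ then works uniformly, whether or not~$L$ acts trivially on~$N$.
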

\begin{proof} We can argue place by place: indeed the place by place Shafarevich hypothesis is weaker than~$S$-Shafarevich property, and the reductivity of~$H_S$ can be checked place by place. We may assume for simplicity that~$S$ consists of only one finite place.

We will prove the contrapositive statement, namely that for non reductive~$H_S$ the $S$-Shafarevich property cannot hold.
Let us denote the unipotent radical of~$H$ by~$N_H$. That~$H_S$ is reductive means that~$N_H$ is trivial. We assume it is not the case.

We apply \cite[Proposition 3.1]{BorelTits} to the unipotent subgroup~$N_H \subset G_S$.
There exists a parabolic subgroup $P$ of $G_S$ such: that $N_H$ is
contained in the unipotent radical~$N_P)$ of $P$ and the normaliser of
$N_H$ in $G_S$ is contained in $P$. In particular, $H_S$ is contained in $P$.

By \cite[Prop 8.4.5]{Springer} there exists a cocharacter 
$$
y : {\bf G}_m \lto G_S
$$
of $G_S$ over $\QQ_S$  such that
$$
P = \{ g \in G_S : Ad_{y(t)}(g) \text{ converges as } t \lto \infty \} 
$$
and (cf. \cite[Th. 13.4.2(i)]{Springer}, \cite[\S2.2 Def. 2.3/Prop. 2.5]{GIT})
$$
N_P = \{ g \in G_S : Ad_{y(t)}(g) \text{ converges to~$e$ as } t \lto \infty \}.
$$
Moreover, the centraliser of~$y$ in~$P$ is a Levi factor~$L$ of~$P$. It follows that for all~$p=\lambda\cdot n$ in~$P$,
with~$l$ in~$L$ and~$n$ in~$\Radu(P)$, the limit~$\lim_{t\to\infty} Ad_{y(t)}(p)$ is the factor~$l$.

We will contradict the $S$-Shafarevich property by showing that the family of the~$\ol{(h,y(t))}$, as~$t$ diverges to infinity,
\begin{itemize}
\item describes infinitely many points in the $S$-Hecke orbit of~$s$,
\item and that these points are all defined on a common finite extension of~$E$.
\end{itemize}

We address the first statement.

By the non triviality of~$N_H$, we may pick an element~$u$ in~$N_H$ distinct from~$e$. It follows that 
the conjugacy class~$C(u)$ of~$u$ in~$G_S$ does not contain~$e$.
As~$Z_G(M)$ centralises~$H_S$ and its subgroup~$N_H$, the orbit map at~$u$ for the 
conjugation action factors through~$G_S/Z_{G_S}(M)$. We have a map
\[
c:G_S/Z_{G_S}(M)\xrightarrow{gZ_{G_S}(M)\mapsto Ad_g(u)} C(u).
\]

We can deduce by contradiction that~$y(t)$ is not bounded modulo~$Z_{G_S}(M)$ as~$t$ diverges to~$\infty$.
Assume not. Then~$y(t)Z_{G_S}(M)$ would have some accumulation point~$gZ_{G_S}(M)$. Hence~$c(y(t)Z_{G_S}(M))$ would have the accumulation point~$c(gZ_{G_S})$, which belongs to~$C(u)$ and hence is distinct from~$e$. This contradicts
the fact that~$c(y(t)Z_{G_S}(M))$ converges to~$e$.

The Hecke orbit of~$s$ can be identified with~$Z_{G_S}(M)(\QQ)\backslash G_S /K_S$ through the quotient of the map~$g\mapsto \ol{(s,g)}$ on~$G_S$. Let us claim that~$Z_{G_S}(M)(\QQ)y(t)K_S$ describes infinitely many cosets. It is sufficient that~$Z_{G_S}(M)y(t)K_S$ does so. If not, then~$Z_{G_S}(M)y(t)$ would be contained in finitely many right~$K_S$ orbits, that is in a bounded set of $Z_{G_S}(M)$-cosets, which cannot be, as we already proved. This proves the statement.

We address the second statement, investigating the field of definition of these $S$-Hecke conjugates of~$s$. We use that the extension of definition of a point~$\ol{(s,g)}$ is associated with the finite quotient~$gU_Sg^{-1} K_S/K_S$ of~$\Gal(\ol{E}/E)$.

As~$U_S$ is topologically of finite type, it will be sufficient to show that~$\# y(t)U_Sy(t)^{-1} K_S/K_S$ is bounded as~$t$ diverges to~$\infty$. It will even be sufficient that~$y(t)U_Sy(t)^{-1}$ remains in a bounded subset~$C$ of~$G_S$, as then we have the bound~$\# y(t)U_Sy(t)^{-1} K_S/K_S\leq \# CK_S/K_S$. 

On~$P$ the family of functions~$Ad_{y(t)}$ converges simply to the projection onto the Levi factor~$L$ of~$P$.
Let us prove the claim that this convergence is uniform on compacts subsets of~$P$, including~$U_S$. As~$Ad_{y(t)}$
acts on factor of the Levi decomposition~$P=L M$ of~$P$ separately, it will suffice to argue for~$L$ and~$\Radu(P)$
separately. 
\begin{itemize}
\item It is immediate for~$L$, on which~$Ad_{y(t)}$ is the identity, independently from~$t$.
\item  We turn to the unipotent group~$N_P$. We may argue at the level of Lie algebras, on which the corresponding action~$ad_{y(t)}$ is linear. 
Yet again we may argue separately, this time with respect to the decomposition into eigenspaces. On a given eigenspace,~$ad_{y(t)}$ acts by a negative power of~$t$, which converges uniformly to~$0$ as~$t$ diverges to~$\infty$ on any bounded subset.
\end{itemize} 
This proves the claim. 

We conclude that~$Ad_{y(t)}$ is a uniformly bounded family on~$U_S$ as~$t$ diverges to~$\infty$: there is a bounded set~$C$ that contains~$Ad_{y(t)}(U_S)$ for big enough~$t$. We obtained bounded subset that we sought. This proves the second statement. 

We have proven that the~$S$-Shafarevich property cannot hold.
%
%
%
%
\end{proof}

We can now conclude the proof of the implication.

\begin{lem}
The $S$-Shafarevich property implies that~$Z_{G_S}(M)$ is cocompact in~$Z_{G_S}(H_S)$.
\end{lem}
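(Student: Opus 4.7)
The plan is to argue by contraposition: assuming $Z_{G_S}(M)\backslash Z_{G_S}(H_S)$ is not compact, I shall produce an infinite family of $E$-rational points in $\Hcal_S(s)$, contradicting the $S$-Shafarevich hypothesis. Since $U_S$ is Zariski dense in $H_S$, one has $Z_{G_S}(H_S) = Z_{G_S}(U_S)$, so we obtain a sequence $(z_n)_{n \geq 0}$ in $Z_{G_S}(U_S)$ whose classes in $Z_{G_S}(M) \backslash Z_{G_S}(U_S)$ leave every compact subset. Invoking the Hecke-equivariant isomorphism $\Sh_K(G, X) \xrightarrow{\sim} \Sh_{tKt^{-1}}(G, X)$, $\ol{(h, g)} \mapsto \ol{(h, gt^{-1})}$, under which the $S$-Shafarevich property is preserved, I first reduce to $t = 1$ so that $s = \ol{(h, 1)}$, and set $s_n := \ol{(h, z_n)} \in \Hcal_S(s)$.

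The $E$-rationality of the $s_n$ is then essentially formal: for any $\sigma \in \Gal(\ol{E}/E)$, the defining commutation $\rho(\sigma) z_n = z_n \rho(\sigma)$ (since $z_n \in Z_{G_S}(U_S)$ and $\rho(\sigma) \in U_S$), combined with $\rho(\sigma) \in K_S \subseteq K$, yields
\[
\sigma(s_n) = \ol{(h, \rho(\sigma) z_n)} = \ol{(h, z_n \rho(\sigma))} = \ol{(h, z_n)} = s_n,
\]
the last equality absorbing $\rho(\sigma)$ into the right-$K$ action.

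To see that infinitely many $s_n$ are distinct, set $\Gamma := Z_G(M)(\QQ) \cap K^S$, an arithmetic subgroup which embeds $S$-adically into $Z_{G_S}(M)$. By Lemma~\ref{Lemme Hecke orbite Z}, one has $s_n = s_m$ iff $z_n \in \Gamma \cdot z_m \cdot K_S$ inside $G_S$. If the images of $(z_n)$ in $\Gamma \backslash G_S / K_S$ were finite, then after extracting a subsequence one could write $z_n = \gamma_n z k_n$ with $\gamma_n \in \Gamma$ and $k_n \in K_S$; using $\Gamma \subseteq Z_{G_S}(M)$ this gives $Z_{G_S}(M) z_n = Z_{G_S}(M) z k_n$, with $z k_n$ lying in the compact set $Z_{G_S}(U_S) \cap z K_S$, so the images of $(z_n)$ in $Z_{G_S}(M) \backslash Z_{G_S}(U_S)$ would be relatively compact, contradicting the unboundedness assumption.

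Combining the two observations yields infinitely many $E$-rational points in $\Hcal_S(s)$, contradicting the $S$-Shafarevich hypothesis. The main technical points to verify carefully are the invariance of the $S$-Shafarevich property under the WLOG reduction to $t=1$ (in particular, ensuring that the conjugate compact open $tKt^{-1}$ satisfies the standing neatness and congruence assumptions, or that these can be recovered by passing to a finite-index subgroup), and the exact double-coset bookkeeping in the distinctness step between $\Gamma$, $K_S$, and $Z_{G_S}(M)$; the Galois-rationality calculation itself, which is the conceptual heart of the argument, is short and transparent once the commutation $z_n\rho(\sigma)=\rho(\sigma)z_n$ is isolated.
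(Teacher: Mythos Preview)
Your proof is correct and follows the same contrapositive strategy as the paper: exhibit $E$-rational points $\ol{(h,z)}$ for $z\in Z_{G_S}(U_S)$ using the commutation $\rho(\sigma)z=z\rho(\sigma)$ together with $\rho(\sigma)\in K$, and then argue that these points are infinitely many. The $E$-rationality computation is identical to the paper's.

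The genuine difference lies in the infinitude step. The paper passes to the adelic quotient: it shows that
\[
Z(\QQ_S)\backslash G(\QQ_S)\longrightarrow Z(\AAA_f)\backslash G(\AAA_f)
\]
is a closed immersion (via a Galois-cohomology argument), and then deduces that the image of the non-compact set $Z(\QQ_S)\backslash Z_{G_S}(U_S)$ is unbounded, hence meets infinitely many $K$-cosets. Your argument is more direct: you identify the relevant double-coset space as $\Gamma\backslash G_S/K_S$ with $\Gamma=Z_G(M)(\QQ)\cap K^S$ (this is actually an \emph{$S$-arithmetic} subgroup, not an arithmetic one, though the distinction is immaterial here), and observe that finiteness of the image would force the $z_n$ into a single coset $\Gamma z K_S$, whence $Z_{G_S}(M)z_n=Z_{G_S}(M)zk_n$ with $zk_n$ confined to the compact set $Z_{G_S}(U_S)\cap zK_S$. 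This avoids the adelic detour entirely and is arguably cleaner; the paper's route has the advantage of making the role of the full adelic class set explicit, which is consistent with how the converse implication was handled.

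Your explicit reduction to $t=1$ via the isomorphism $\Sh_K(G,X)\to\Sh_{tKt^{-1}}(G,X)$ is a point the paper glosses over (it simply writes $\ol{(h,z)}$ without comment). Your caveat about the standing hypotheses on $K$ is well taken but harmless: the product decomposition survives conjugation, and the neatness/congruence condition can be restored by passing to a finite-index subgroup, which the paper explicitly permits.
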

\begin{proof}
Assume  that $Z_G(M)\backslash Z_{G_S}(U_S)$ is not compact, and let us disprove the $S$-Shafarevich property.
Possibly substituting~$E$ with a finite extension thereof, we may assume~$U_S\subseteq K$. 
We will prove that
\[\Hcal=\{\ol{(h,z)}|z\in Z_{G_S}(U_S)\}\]
is an infinite set of points defined over~$E$.

Let~$\ol{(h,z)}$ be such a point. For~$\sigma$ in~$\Gal(\ol{E}/E)$ with image~$u$ in~$U_S$ we have
\[
\sigma\left(\ol{(h,z)}\right)=\ol{(h,uz)}=\ol{(h,zu)}=\ol{(h,z)},
\]
by definition of~$U_S$, by the fact that~$z$ commutes with~$U_S$, and that~$U_S$ is contained in~$K$ respectively.
It follows that these points are defined over~$E$.

By Lemma~\ref{Lemme Hecke orbite Z} we have a bijection
\[
\Hcal\simeq Z_G(M)(\QQ)\backslash Z_G(M)(\QQ)\cdot Z_{G_S}(U_S)\cdot K/K.
\]
We need to to prove this is an infinite set. 
As the following map of double quotients
\[
Z(\QQ)\backslash Z(\QQ)\cdot Z_{G_S}(U_S)\cdot K/K
\longrightarrow
Z(\AAA_f)\backslash Z(\AAA_f)\cdot Z_{G_S}(U_S)\cdot K/K
\]
is a surjection, it is sufficient to prove its image is infinite. As~$K$ is compact,
it is enough that
\begin{equation}\label{quotient Z A}
Z(\AAA_f)\backslash Z(\AAA_f)\cdot Z_{G_S}(U_S)
\end{equation}
be unbounded in~$Z(\AAA_f)\backslash G(\AAA_f)$. 

Let us accept for now  that the the map
\begin{equation}\label{closed immersion}
Z(\QQ_S)\backslash G(\QQ_S)\longrightarrow Z(\AAA_f)\backslash G(\AAA_f),
\end{equation}
induced by the closed immersion~$\QQ_S\to\AAA_f$, is itself a closed immersion. Moreover~$Z_{G_S}(U_S)$ is closed in~$G(\QQ_S)$ and contains~$Z(\QQ_S)$. Hence the group~$Z(\QQ_S)\backslash Z_{G_S}(U_S)$ embeds as a closed subset of~$Z(\QQ_S)\backslash G(\QQ_S)$. It is also non compact by hypothesis. Its image in~$Z(\AAA_f)\backslash G(\AAA_f)$ is closed and non compact. It is hence unbounded. But this image is~\eqref{quotient Z A}. This concludes

We prove now that~\eqref{closed immersion} is a closed immersion. It is certainly the case of
\[
(Z\backslash G)(\QQ_S)\longrightarrow
(Z\backslash G)(\AAA_f)
\]
as it induced by the closed immersion~$\QQ_S\to\AAA_f$ (we may embed~$Z/G$ as a closed subvariety in an affine space). Moreover the map
\[
Z(\QQ_S)\backslash G(\QQ_S) \longrightarrow (Z\backslash G)(\QQ_S)
\]
is the kernel of the continuous map~$ (Z\backslash G)(\QQ_S)\to H^1(\QQ_S;Z)$, 
hence a closed immersion. The image~$F$ of a closed subset of~$Z(\QQ_S)\backslash G(\QQ_S)$
in~$(Z\backslash G)(\AAA_f)$ is hence closed. The inverse image of~$F$ by the continuous map~$Z(\AAA_f)\backslash G(\AAA_f)\to (Z\backslash G)(\AAA_f)$ is a fortiori closed. The map~\eqref{closed immersion} is then a closed map.
Clearly, it is furthermore injective. It is finally a closed immersion. This concludes.\qedhere

\end{proof}

This finishes the proof of Proposition \ref{charSha}.

\subsection{Shimura varieties of abelian type.}

In this section we show that when $E$ \emph{is of finite type} over $\QQ$
and $(G,X)$ is a Shimura datum of abelian type, all the properties
except possibly $S$-Mumford-Tate hold.

\begin{prop}
Assume that $\Sh_K(G,X)$ is a Shimura variety of abelian type
and that $E$ is a field of finite type.

Then the $S$-semisimplicity, $S$-Tate (and hence $S$-Shafarevich)
and $S$-algebraicity  hold for all points of 
 $\Sh_K(G,X)(E)$.
\end{prop}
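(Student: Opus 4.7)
The plan is to reduce the statement to the classical case of polarised abelian varieties over finitely generated fields, where the properties at stake follow from the deep theorems of Faltings on Tate modules.

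First, since $(G,X)$ is of abelian type, I would pass to an auxiliary Shimura datum $(G_1,X_1)$ of Hodge type having the same adjoint datum as $(G,X)$, provided by the standard construction behind the definition of abelian type. All four properties of Definition~\ref{ProprietesGaloisiennes} are formulated entirely in terms of the image $U_S \subseteq M(\QQ_S)$, its Zariski closure $H_S$, and the centralisers in $G_S$. Using that $G^{\der}$ receives a central isogeny from $G_1^{\der}$, I would transfer the $S$-adic monodromy from $\Sh_K(G,X)$ to a suitable cover inside $\Sh_{K_1}(G_1,X_1)$ and verify that each of $S$-semisimplicity, $S$-Tate, $S$-Shafarevich and $S$-algebraicity descends, modulo the behaviour of the central torus. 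The central contribution is already controlled: Proposition~\ref{Prop E type fini abelien}\,\eqref{Prop E 3} shows, using only the reciprocity law and the assumption that $E$ is finitely generated, that the image of $U_S$ in $M^{\ab}(\QQ_S)$ is open.

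Second, in the Hodge type case, up to a finite extension of $E$, the point $s$ corresponds to a polarised abelian variety $A/E$ endowed with the Hodge cycles defining the embedding $G_1 \hookrightarrow \GL(H^1(A,\QQ))$. The Mumford--Tate group of $h$ is then the Mumford--Tate group of $A$, and for each $\ell \in S$ the $\ell$-component of $\rho_{s,S}$ coincides, up to a finite index subgroup, with the Galois action on the rational Tate module $V_\ell(A)$. The $S$-semisimplicity is then immediate from Faltings' semisimplicity theorem: $V_\ell(A)$ is a semisimple $\Gal(\ol{E}/E)$-module, so each $H_\ell$, being the Zariski closure of a compact group acting semisimply, is reductive. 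For the $S$-Tate property, I would invoke Faltings' form of the Tate conjecture: the commutant of the Galois action inside $\End(V_\ell(A))$ is $\End(A)\otimes\QQ_\ell$. Since the commutant of $M$ inside $\End(H^1(A,\QQ))$ equals $\End(A)\otimes\QQ$, extending scalars and intersecting with $G_S$ yields $Z_{G_S}(H_S^0) = Z_{G_S}(M)$. The $S$-Shafarevich property then follows at once from Proposition~\ref{charSha}, and $S$-algebraicity follows from the algebraicity criterion established just above in the present section, which asserts that $S$-Tate together with $S$-semisimplicity implies $S$-algebraicity when $E$ is of finite type.

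The main obstacle is the first step: a careful descent of the four properties from the Hodge type cover $(G_1,X_1)$ to the abelian type datum $(G,X)$, tracking how $M$, its $\QQ_S$-centraliser in $G_S$, and the image $U_S$ interact with the central isogeny and with the torus quotient $M^{\ab}$. Once this change of groups is clarified, the analytic core, namely the Faltings semisimplicity and Tate conjecture theorems, is used essentially as a black box.
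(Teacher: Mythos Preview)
Your proposal is correct and follows essentially the same route as the paper: lift the point to a Hodge-type Shimura variety via the central isogeny coming from the definition of abelian type, apply Faltings' semisimplicity and Tate-conjecture theorems to the associated abelian variety over the finitely generated field $E$ to obtain reductivity of $H'_S$ and the equality $Z_{G'_S}(M')=Z_{G'_S}(H'_S)$, and then push these back along the central isogeny to $(G,X)$. You are in fact somewhat more careful than the paper about the central-torus contribution (via Proposition~\ref{Prop E type fini abelien}) and about deducing $S$-algebraicity from the earlier criterion, points the paper's proof leaves implicit.
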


\begin{proof}
By definition of a Shimura variety of abelian type, there exists 
a Shimura subdatum $(G',X) \subset (GSp_{2g}, \HH_g)$
with a central isogeny $\theta \colon G' \lto G$.

There exists a compact open subgroup $K' \subset G'(\AAA_f)$ such that
$\Sh_{K'}(G',X)$ is a subvariety of $\cA_{g,3}$ (the fine moduli scheme 
of abelian vareities with level $3$ structure) and there is a finite morphism
$\Sh_{K'}(G',X) \lto \Sh_K(G,X)$.

For the purposes of proving the $S$-Tate property we may assume that
all our Shimura varieties are defined over $E$.

Let $x = \overline{(s,t)} \in \Sh_{K}(G,X)(E)$ and $x'$ a point 
of $\Sh_{K'}(G',X)$ over a finite extension of $E$.
Without loss of generality in the proof of the $S$-Tate property 
we replace $E$ by this finite extension and hence assume that $s'$ is defined over $E$.

The point $s'$ corresponds to an abelian varietey $A$ defined over $E$.
Let $V_S(A) = \prod_{l \in S} V_l(A)$ be the product of $l$-adic Tate modules attached to $A$ for $l \in S$. The module $V_l(A)$ is endowed 
with an action of $\Gal(\ol{E}/E)$ and with a symplectic action of 
$\GSp_{2g}(\QQ_S)$. Following the arguments of Remark 2.8
of \cite{UY1}, we see that the action of $\Gal(\ol{E}/E)$ is given by 
the representation 
$$
\rho'_S \colon \Gal(\ol{E}/E) \lto G'_S \subset \GSp_{2g}(\QQ_S).
$$

Note that in \cite{UY1} there the authors suppose the field $E$ to be a number field and $S$ to consist of one prime. However, all arguments adapt verbatim in our situation.

We denote by $M'$ the Mumford-Tate of $x'$ and by $H'_S$
the image of $\rho'_S$.

By Falting's theorem (Tate conjecture for abelian varieties),
 the group $H'_S$ is reductive. Note that Falting's theorem holds for abelian
varieties over finitely generated fields over $\QQ$ (see Chapter  VI of \cite{FW}).
 
 Again, by the Tate conjecture, we have 
 $$
 Z_{GSp_{2g, S}}(M') = Z_{GSp_{2g, S}}(H'_S)
 $$
 and therefore 
 $$
 Z_{G'_S}(M') = Z_{G'_S}(H'_S).
 $$
 Let $M$ be the Mumford-Tate group of $x$ and let $H_S$ be Zariski closure of the image 
 of $\rho_S$, the representation of $\Gal(\ol{E}/E)$ attached to $x$.
 
 Recall that we have a central isogeny $\theta \colon G' \lto G$.
 We naturally have
 $$
 M = \theta(M'), \quad H_S = \theta(H'_S).
 $$ 
In particular, $H_S$ is semisimple. 
 
 Since $\theta$ is a central isogeny and hence 
 commutes with conjugation, the equality $Z_{G'_S}(M') = Z_{G'_S}(H'_S)$.
\end{proof}

\subsection{Dependence on the field $E$.}

We have seen that for a Shimura variety of abelian type and
when $E$ is of finite type all properties~\ref{ProprietesGaloisiennes}
(except $S$-Mumford-Tate) hold.

In this section we will show that these properties fail even when $(G,X)$ is
of abelian type when the field $E$ is not of finite type.

Let $A$ be an elliptic curve over $\QQ$ without complex multiplication.
By a celebrated theorem of Serre, the image of the adelic representation of $\Gal(\overline{\QQ}/\QQ)$ attached to $A$ is open in $\GL_2(\widehat{\ZZ})$.
For any $S$, all our properties hold for $A$ over $\QQ$ 
(or any finite type extension of $\QQ$).

Choose a prime $l$ such that $\rho_l$ surjects onto $\GL_2(\ZZ_l)$.
Take $S= \{ l \}$.
Let $B$ be the standard Borel in $\GL_2(\ZZ_l)$ (upper triangular matrices)
and let $E$ be an extension corresponding to the subgroup $\rho_l^{-1}(B)$ of $\Gal(\overline{\QQ}/\QQ)$.
The extension $E$ is not of finite type over $\QQ$.

We immediately see that $S$-Mumford-Tate property does not hold
for $A/E$ (the image of Galois is $B$). The $S$-semisimplicity also fails
since the Zariski closure of $B$ is not reductive.
The $S$-Tate property holds however.
Indeed the centraliser of $B$ is the centre of $\GL_2$ which is of course
also the centraliser of $\GL_2$ in itself.

Finally, let us see directly that the $S$-Shafarevich property fails.
Write $\overline{(s, 1)}$ the point of $\Sh_{\GL_n(\widehat{\ZZ})}(\GL_2,\HH^{\pm})$ corresponding to $A$.

Consider elements
$g_n = \left(\begin{smallmatrix}l^{-n}& 0\\0&1\end{smallmatrix}\right)$ for $n > 0$.

Note that
$$
g_n^{-1} B g_n \subset B
$$

and therefore the sequence of points $\overline{(s,g_n)}$ is defined over 
$E$. This sequence of points is obviously infinite - it corresponds to all elliptic curves isogeneous (over $\CC$) to $A$ by a cyclic isogeny of
degree $l^n$.

We now give an example where $S$-algebraicity fails.
In the previous situation, consider the subgroup of $\GL_2(\ZZ_l)$
defined by 
$\left(\begin{smallmatrix}e^x& x\\0&e^x\end{smallmatrix}\right)$
where $x \in l^2 \ZZ_l$.

Note that this group is Zariski dense in the group
$\left(\begin{smallmatrix}a& b\\0&a\end{smallmatrix}\right)$  where $a,b \in \ZZ_l$.
However it is not open in this group and therefore $S$-algebraicity fails.

Note that in the previous example semi-simplicity did not hold. However,
we can construct an example where $S$-semi-simplicity does hold, but
the $S$-algebraicity fails.

Let $\alpha$ be the Liouville number, 
$\alpha = \sum_{n=0}^{\infty} l^{n!}$. This is an element of $\ZZ_l$ which is transcendental over $\QQ$. We refer to \cite{AM} and references therein for more details.

Consider the subgroup 
$\left(\begin{smallmatrix}e^{x}& 0\\0&e^{\alpha x}\end{smallmatrix}\right)$  where $x \in \ZZ_l$ and as before $E$ the extension corresponding to this subgroup. The Zariski closure is the diagonal torus therefore 
the $S$-semisimplicity holds.
However the $S$-algebraicity fails.
This example is analogous to the one given after Theorem 2.3 in \cite{Bost}.


\section{The $S$-Arithmetic lift and Equidistribution}\label{Reduction}

In this section we start working towards proving Theorem~\ref{Theoreme1}.
Our aim here is to translate our problem (i.e. Theorem~\ref{Theoreme1}) about the probabilities~$\mu_n$ on~$\Sh_K(G,X)$ into a problem of equidistribution 
on an~$S$-arithmetic homogeneous space of a semisimple algebraic group over~$\QQ$, of the kind studied~\cite{RZ}, in order to apply the main theorem thereof. More precisely, we will use the maps~$\pi_G$ of~\eqref{piG} and~$\pi$ of~\eqref{pider}.
We warn here that the map $\pi$ we construct in this section is not the same as the one considered in the previous section.

We work with a point~$s = \overline{(h,t)}$ of $\Sh_K(G,X)$ and 
its $S$-Hecke orbit~$\Hcal_S(s)$ as in the statement of Theorem~\ref{Theoreme1}.

We also have a sequence of points~$s_n = \ol{(h,t\cdot g_n)}$ of~$\Hcal_S(s)$
with some $g_n \in G_S$.
The elements~$g_n$ are not necessarily uniquely defined by the points $s_n$ and will actually be subject to modification, without changing~$s_n$, in the course of the proof.

For any~$g$ in~$G_S$, we may change the representative~$(h,t)$ of~$s$ into the representative~$(h,t\cdot g)$, provided that we change accordingly each~$g_n$ into~$g^{-1}g_n$. Neither the~$S$-Hecke orbit~
$\Hcal_S(s)$, nor the set of points~$\{s_n\}$ are changed by such substitutions.  The group~$U_S$ is left unchanged too. Hence the $S$-Shafarevich property of~$s$ remains valid under this substitution of~$s$ with~$\overline{(h,tg)}$.

Lastly, since the conclusion of Theorem~\ref{Theoreme1} we may
``extract a subsequence'', we may, whenever necessary, replace~$(g_n)_{n\geq0}$ by a subsequence.

For technical reasons we will need to assume that $H_S \cap G^{der}$
to be $\QQ_S$-Zariski connected. 
It is sufficient that~$U_S\cap G^{\der}(\QQ_S)$ be  contained in~$\left( H_S\cap G^{\der}_{\QQ_S}\right)^0(\QQ_S)$.
This can be achieved by passing to a subgroup of finite index in~$U_S$, that is to a finite extension of~$E$.
We note that this assumption will still be fulfilled after passing again to a finite extensions of~$E$. We
recall that the statement of Theorem~\ref{Theoreme1} allows passing to finite extensions of~$E$.

\subsection{The reductive $S$-arithmetic lift.}

Since the equidistribution theorems of \cite{RZ} apply to $S$-arithmetic homogeneous
spaces, we need to ``lift'' our situation, from the base Shimura variety of level~$K$ to such a space.

In this section we construct a map~$\pi_G$  below (see ~\eqref{piG}), from an~$S$-arithmetic homogeneous space of the reductive group~$G$ to~$\Sh_K(G,X)$. We then introduce probabilities~$\wt{\mu_n}'$ which
are ``lifts'' of the probabilities~${\mu_n}$, that is such that we have the compatibility~\eqref{lem compat} by direct image.

\subsubsection{The~$S$-arithmetic map}
For convenience we identify the sub\-group~$G_S\subseteq G(\AAA_f)$ with its image~$G(\QQ_S)$. Let us consider the ``orbit maps''
\begin{align*}
 \omega_{tK}&:	G_S\xrightarrow{g\mapsto g\cdot t K} G(\AAA_f)/K&&\text{ at the coset }tK,\\
 \text{ and }~
 \omega_h&:G(\RR) \xrightarrow{g \mapsto g \cdot h} X&&\text{ at the point~$h$.}
\end{align*}
Together these induce the following map
\[
 	\ol{\omega_{(h,tK)}}:
 		G(\RR\times \QQ_S)
 			\xrightarrow{ \omega_{h} \times \omega_{tK}}
		X\times G(\AAA_f)
			\xrightarrow{(h,t)\mapsto \ol{(h,t)}}
		\Sh_K(G,X).
\]
Equivalently, for any element~$(g_\RR,g_S)\in G(\RR)\times G(\QQ_S)\simeq G(\RR\times \QQ_S)$,
\[\ol{\omega_{(h,tK)}}(g_\RR,g_S)=\ol{( g_\RR\cdot h, g_S\cdot t)}.\]
\begin{quote}
\item
\paragraph*{Remark}
The Shimura variety~$\Sh_K(G,X)$ has finitely many geometrically connected components.
Each component is a quotient of a the hermitian symmetric domain~$X^+$ by an arithmetic subgroup of~$G(\QQ)$. The image of~$\ol{\omega_{(h,tK)}}$ 
consists of the union the components this image intersects.\footnote{This is an instance of a ``$S$-adic packet'' of components extracted out of the ``adelic packet'' of all components. For instance, if the Shimura variety is reduced to a class group (and hence finite), viewed as a Galois group, we obtain cosets of the subgroup generated by the Frobenius  from the places in~$S$.}
\end{quote}

The product~$G_S\cdot tKt^{-1}$  is open in~$G(\AAA_f)$. As~$G_S$ is a normal subgroup in~$G(\AAA_f)$, this product is a subgroup. 
We define then following $S$-arithmetic subgroup of $G(\QQ)$
\footnote{We understand ``$S$-arithmetic'' in the sense that there is a faithful $\QQ$-linear representation~$\rho:G\hookrightarrow \GL(n)$, such that the groups~$\rho(G)(\QQ)\cap \GL\left(n,\ZZ[(1/\ell)_{\ell\in S}]\right)$ and~$\rho(\Gamma)$ are commensurable. For~$S=\emptyset$ we recover the usual notion of arithmetic subgroup.
\newline \indent
We will define~$\Gamma$ in~\eqref{defi Gamma}, as an~$S$-arithmetic subgroup of~$G^\der$.}:
\begin{equation}\label{defi Gamma G}
\Gamma_G := G(\QQ)\cap (G_S\cdot tKt^{-1})
\end{equation}
Equivalently we may define, as the intersection, inside of~$G(\AAA)$,
\[
\Gamma_G=\left( G(\RR)\cdot G_S \cdot tKt^{-1} \right)\cap G(\QQ).
\]
This~$\Gamma_G$ depends on~$tK$ and~$S$ though we don't specify it to simplify notation.

The map~$\ol{\omega_{(h,tK)}}$ is left invariant under the action of~$\Gamma_G$: this map factors through a map
\begin{equation}\label{piG}
\pi_G=\pi_{(h,tK)} : \Gamma_G\backslash G(\RR \times \QQ_S) \lto \Sh_K(G,X).
\end{equation}

Let~$K_h$ be the stabiliser of~$h$ in~$G(\RR)$ and~$K_S=K\cap G_S$, so that~$tK_St^{-1}$ is the stabiliser of the coset~$tK$ in~$G_S$.
We may further factor~$\pi_{(h,tK)}$ as the composition
\begin{itemize}
\item of the quotient map
\[
\Gamma_{G} \backslash G(\RR \times \QQ_S)
\to
\Gamma_{G} \backslash \left( G(\RR)\times G(\QQ_S)\right)/\left( K_h\times tK_St^{-1}\right)
\] by the right action of the compact group $K_h \times t K_S t^{-1}$;
\item followed by a closed open immersion into~$\Sh_K(G,X)$, that is  the inclusion of a union of components.
\end{itemize}

\subsubsection{Lift of probabilities}
Recall from~\eqref{defimonogroup} of~Definition~\ref{defiintro} that we denote~$U_S$ denote the~$S$-adic monodromy group associated with~$\overline{(h,t)}$. 
Since $U_S$ is a compact group, it supports a Haar probability.
Let~$\mu_{U_S}$ be \emph{the direct image} of this Haar probability 
in the $S$-arithmetic quotient
\[
	U_S\hookrightarrow G(\RR\times \QQ_S)\to\Gamma_G\backslash G(\RR\times \QQ_S).
\]

The right translate of~$\mu_{U_S}$ by~$g_n$ is denoted by
\begin{equation}\label{lifted1}
	\wt{\mu_n}^\prime=\mu_{U_S}\cdot g_n.
\end{equation}
We now show that the~$\wt{\mu_n}'$ are lifts of the~$\mu_n$.

\begin{lem} Pushing forward the measures~$\wt{\mu_n}'$ from~\eqref{lifted1} along the map~$\pi_G$ from~\eqref{piG} we get
\begin{equation}\label{lem compat}
{\pi_G}_\star(\wt{\mu_n}')=\mu_n.
\end{equation}
\end{lem}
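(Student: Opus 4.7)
The plan is to unravel both sides as pushforward measures on $\Sh_K(G,X)$ and match them using the Galois characterisation~\eqref{Galois carac}. By construction, $\mu_{U_S}$ is the image of the Haar probability on the compact group $U_S \subset G(\QQ_S) \subset G(\RR\times\QQ_S)$ under the quotient map to $\Gamma_G \backslash G(\RR\times\QQ_S)$, so its right translate $\wt{\mu_n}' = \mu_{U_S}\cdot g_n$ is the image of the same Haar probability under $u\mapsto [u\cdot g_n]$. Pushing further along $\pi_G$ identifies ${\pi_G}_\star(\wt{\mu_n}')$ with the direct image, under the composite
\[
f\colon U_S \longrightarrow \Sh_K(G,X),\qquad u\longmapsto \pi_G(u\cdot g_n) = \ol{(h,\, u g_n\cdot t)},
\]
of the Haar probability on $U_S$.

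The main substantive step is to match $f$ with the Galois orbit map $u\mapsto \sigma(s_n)$, where $\sigma\in\Gal(\ol E/E)$ is any lift with $\rho(\sigma)=u$. This requires interchanging $g_n$ and $t$ in the expression $u g_n\cdot t$. I would exploit the freedom in the choice of representative $(h,t)\to (h, t g)$, $g_n\to g^{-1}g_n$, $g\in G_S$, described at the opening of Section~\ref{Reduction}: taking $g=t_S^{-1}$, where $t_S$ denotes the $S$-component of $t$, normalises $t$ to have trivial $S$-component, and then $t$ commutes with every element of $G_S$ in $G(\AAA_f)$. Under this normalisation $u g_n\cdot t = u\cdot t g_n$, and \eqref{Galois carac} applied at $g=g_n$ yields $\ol{(h,\, u\cdot t g_n)} = \ol{(h,\, \rho(\sigma)\cdot t g_n)} = \sigma(s_n)$ whenever $u=\rho(\sigma)$. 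Since $\rho$ surjects onto $U_S$, the image of $f$ equals the Galois orbit of $s_n$, and $f$ factors through the quotient of $U_S$ by the open subgroup stabilising $s_n$.

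Finally, the pushforward of the Haar probability on a compact group by a continuous surjection onto a finite quotient is, by uniqueness of Haar measure, the uniform counting probability on the quotient. Transporting along the bijection $U_S/\mathrm{Stab}_{U_S}(s_n) \simeq \Gal(\ol E/E)\cdot s_n$ then recovers exactly $\mu_n$ as defined in Definition~\ref{measure}, verifying ${\pi_G}_\star(\wt{\mu_n}')=\mu_n$. The only delicate ingredient is the interchange of $g_n$ and $t$, which rests on the normality of $G_S$ in $G(\AAA_f)$ and the freedom to choose representatives with trivial $S$-component of $t$; everything else is a routine transport of measure.
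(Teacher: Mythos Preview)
Your argument is correct and follows essentially the same route as the paper: identify ${\pi_G}_\star(\wt{\mu_n}')$ as the pushforward of Haar on~$U_S$ along a composite map into~$\Sh_K(G,X)$, match this map with the Galois orbit map at~$s_n$ via~\eqref{Galois carac}, and conclude by uniqueness of the Haar probability on the finite orbit. You are in fact more careful than the paper on one point: the paper's diagram tacitly asserts that the composite sends~$u\in U_S$ to~$\ol{(h,\rho(\sigma)tg_n)}$, whereas unwinding~$\pi_G=\pi_{(h,tK)}$ literally gives~$\ol{(h,ug_n\cdot t)}$; your normalisation of~$t$ to have trivial $S$-component (legitimate by the freedom recorded at the start of Section~\ref{Reduction}) is exactly what makes~$g_n t=tg_n$ and closes this gap.
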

\begin{proof}
The map~$\alpha$ defined by the commutativity of the diagram

 \begin{equation}\label{composee}
 \xymatrix{
	\Gal\left(\ol{E}/E\right)
		\ar[r]^(.65){\rho}
		\ar[drrr]_\alpha
	&
	U_S\ar[drr]^\beta
		\ar[r]^(.30){\sigma\mapsto \Gamma_G\cdot\sigma}
	&
	\Gamma_G\backslash G(\RR\times \QQ_S)
		\ar[r]^{x\mapsto x \cdot g_n}\ar[dr]^{\gamma}
	&
	\Gamma_G\backslash G(\RR\times \QQ_S)
		\ar[d]^{\pi_G}
	\\&&&
	\Sh_K(G,X)
}
\end{equation}
sends~$\sigma$ to~$\ol{(h,\rho(\sigma) t g_n)}$. By the defining properties~\eqref{Galois carac} of~$\rho$ and of~$g_n$, we may rewrite
\(
\alpha(\sigma)=\sigma\cdot \ol{(h,t g_n)}=\sigma\cdot s_n.
\) In other words~$\alpha$ is the orbit map at~$s_n$ for the action of~$\Gal\left(\ol{E}/E\right)$ on~$\Sh_K(G,X)$. In particular its image is the Galois orbit of~$s_n$, which is~$\Supp(\mu_{s_n})$ in the notation of~\eqref{defi mesure}. The map~$\alpha$ is clearly left~$\Gal\left(\ol{E}/E\right)$-equivariant.
Consequently the direct image of the Haar probability, say~$\mu_E$, on~$\Gal\left(\ol{E}/E\right)$ is an invariant probability on~$\Supp(\mu_{s_n})$, necessarily the Haar probability of the transitive~$\Gal\left(\ol{E}/E\right)$-space~$\Supp(\mu_{s_n})$.
The counting probability~$\mu_n=\mu_{s_n}$ is invariant by permutation, hence is~$\Gal\left(\ol{E}/E\right)$-invariant.
It must equal~$\mu_n$ by the uniqueness of the Haar probability:
\[
	\alpha_\star\left(\mu_E\right)=\mu_n.
\]

As~\eqref{composee} is commutative and pushforwards are functorial, we may factor
\[
	\beta_\star\circ \rho_\star=(\beta\circ \rho)_\star=\alpha_\star.
\]
The representation~$\rho$ is a continuous map of compact groups, 
so the direct image~$\rho_\star(\mu_E)$ is the Haar measure on
the image~$\rho(\Gal\left(\ol{E}/E\right))$. This image is~$U_S$
by definition.
It follows that~$\mu_n$ is the direct image of the Haar probability measure~$\rho_\star(\mu_E)$ on~$U_S$ through this~$\beta$. 

We defined~$\mu_{U_S}$ as
the image of the Haar measure of~$U_S$ in the first occurrence in~\eqref{composee} of~$\Gamma_G\backslash G(\RR\times \QQ_S)$, and~$\wt{\mu_n}'$ as the direct image in the second occurrence. By the same functoriality argument as above, the compatibilities
\[
\mu_n=\gamma_\star(\mu_{U_S})={\pi_G}_\star(\wt{\mu_n}')=\beta_\star(\rho_\star(\mu_E))=\alpha_\star(\mu_E),
\]
including the identity~\eqref{lem compat}, follow.
\end{proof}


\subsection{Passing to the derived subgroup}\label{derived}

Theorems from ~\cite{RZ} require that the group $G$ be semisimple. 
We will reduce to this case by passing from~$G$ to~$G^{der}$.
In this section,
 we modify our lifting probabilities~$\wt{\mu_n}'=\mu_{U_S}\cdot g_n$ in two ways
 in order to be able to make this assumption and thus recover the setting of ~\cite{RZ}.
 
\begin{itemize}
\item Firstly we substitute the translating element~$g_n$ in~\eqref{lifted1} with another one which comes from the derived group~$G^\der(\QQ_S)$, thus constructing 
a probability~$\wt{\mu_n}''$. This first step might require passing to a subsequence and altering~$t$.

\item Secondly, we replace the compact subgroup~$U_S$ of~$G(\QQ_S)$ by a compact subgroup~$\Omega$ of~$G^\der(\QQ_S)$, thus producing~$\wt{\mu_n}'''$. This step may require passing to a finite extension of~$E$.
\end{itemize}


We recall that we beforehand ensured that~$U_S\cap G^{\der}(\QQ_S)$ is $\QQ_S$-Zariski connected.

As, for simplicity reasons, the reference~\cite{RZ} deals semisimple groups~$G$, instead of general reductive, we need to carry out the reduction to the semisimple case. A reader uninterested in subtle technical details may skip directly to the next section~\ref{sous section stabilite}.

\subsubsection{Some finite index open subgroups.}
We first note that
\begin{equation}\label{indice fini}
G^{\der}(\QQ_S)\cdot Z(\QQ_S)\text{ is a finite index open subgroup of~$G_S$.}
\end{equation}
\begin{proof}
 As~$S$ is finite, we may work place by place, in which case it is sufficient to refer to~\cite[\S3.2, Cor.~3 p.~122, \S6.4 Cor.~3 p.~320]{PR}.
\end{proof}

In this section we will  use the notation:
\begin{equation}\label{def Gamma Z}
\Gamma_Z=\left(Z(\RR)\cdot \Gamma_G\right)\cap Z(\QQ_S).
\end{equation}
The finiteness of the class group of the torus~$Z$ tells us  that
\[
	(Z(\QQ_S)\cap K) \cdot \Gamma_Z
\]
is an open subgroup of finite index in~$Z(\QQ_S)$. We deduce, combining with~\eqref{indice fini}, that
\begin{equation}\label{indice fini 2}
   G^{\der}(\QQ_S)\cdot	(Z(\QQ_S)\cap K) \cdot \Gamma_Z
\end{equation}
is a finite index open subgroup of~$G_S$.
\subsubsection{Reducing to~$g_n\in G^{\text der.}(\QQ_S)$}
The sequence  of right cosets relative to~\eqref{indice fini 2}, induced by~$(g_n)_{n\geq0}$, that is
\begin{equation}\label{coset gamma Z}
X	\left( g_n\cdot\left(G^{\der}(\QQ_S)\cdot	(Z(\QQ_S)\cap K) \cdot \Gamma_Z\right)\right)_{n\geq 0},
\end{equation}
can, by finiteness of the index below, be decomposed into at most
\begin{equation}
N(G,\Gamma_G)=\left[G^{\text der.}(\QQ_S)\cdot (Z(\QQ_S)\cap K) \cdot \Gamma_Z~:~G_S\right]
\end{equation}
 constant subsequences. (This index only depends on~$G$ and~$\Gamma_G$ as hinted.)

After possibly passing to a subsequence, we may assume the sequence~\eqref{coset gamma Z} has a constant value.
This value is the right coset of~$g_0$. Replacing~$t$ by~$tg_0$, we may assume that~$g_0=1$, and that the right coset of~$g_n$
is the neutral coset. Equivalently,  for every~$n\geq0$,
\begin{equation}\label{decompo dzg}
	\exists (d_n,z_n,\gamma_n)\in G^{\text der.}(\QQ_S)\times	(Z(\QQ_S)\cap K) \times \Gamma_Z,~g_n=d_n\cdot z_n\cdot\gamma_n.
\end{equation}
We will see that we may assume~$z_n=\gamma_n=1$, and replace~$g_n$ by~$d_n$,
without interfering with the compatibility~\eqref{lem compat}. 
To achieve this, similarly to~\eqref{lifted1}, we set:
\begin{equation}\label{lifted2}
	\wt{\mu_n}''=\mu_{U_S}\cdot d_n.
\end{equation}
\begin{lem}\label{lemme compat2} The direct image along~$\pi_G$ of the above~$\wt{\mu_n}''$ is given by
\begin{equation}\label{lem compat2}
{\pi_G}_\star(\wt{\mu_n}'')=\mu_n.
\end{equation}
\end{lem}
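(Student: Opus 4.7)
The plan is to deduce this lemma from the previous one by showing that right translation by $z_n\gamma_n$ descends to the identity on $\Sh_K(G,X)$ via $\pi_G$. Using the decomposition $g_n = d_n z_n\gamma_n$ from \eqref{decompo dzg}, we have
\[
\wt{\mu_n}' = \mu_{U_S}\cdot g_n = (\mu_{U_S}\cdot d_n)\cdot z_n\gamma_n = \wt{\mu_n}''\cdot z_n\gamma_n,
\]
so by functoriality of push-forwards ${\pi_G}_\star(\wt{\mu_n}') = (\pi_G\circ R_{z_n\gamma_n})_\star(\wt{\mu_n}'')$, where $R$ denotes right translation. Combined with the previous lemma's identity \eqref{lem compat}, the task reduces to verifying the pointwise identity $\pi_G\circ R_{z_n\gamma_n} = \pi_G$.

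I would verify this by treating the two central factors separately. For $z_n\in Z(\QQ_S)\cap K$, the centrality of $z_n$ in $G(\QQ_S)$ yields $g_S\cdot z_n\cdot t = g_S\cdot t\cdot z_n$, and since $z_n\in K$ we get $\ol{(g_\RR h,\, g_S z_n t)} = \ol{(g_\RR h,\, g_S t)}$. For $\gamma_n\in\Gamma_Z$, one must first unpack the definition $\Gamma_Z = (Z(\RR)\cdot\Gamma_G)\cap Z(\QQ_S)$, read in the ambient group $G(\RR)\times G(\QQ_S)$. Writing $\gamma_n = z_\infty\cdot\gamma$ with $z_\infty\in Z(\RR)$ and $\gamma\in\Gamma_G$, the vanishing of the real component of $\gamma_n$ forces $\gamma = z_\infty^{-1}\in Z(\RR)\cap G(\QQ) = Z(\QQ)$, so that $\gamma$ is a \emph{rational central} element of $\Gamma_G\cap Z(\QQ)$ whose $S$-adic projection is exactly~$\gamma_n$.

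With this identification at hand, I would use the diagonal action of $\gamma^{-1}\in\Gamma_G$ to rewrite
\[
\Gamma_G\cdot g\gamma_n = \Gamma_G\cdot\gamma^{-1} g\gamma_n = \Gamma_G\cdot (\gamma^{-1}g_\RR,\, \gamma^{-1}g_S\gamma) = \Gamma_G\cdot(\gamma^{-1}g_\RR,\, g_S),
\]
the last equality by centrality of $\gamma$ in $G$. Applying $\pi_G$ reduces the claim to $\gamma^{-1}\cdot(g_\RR\cdot h) = g_\RR\cdot h$, which holds because $Z(\RR)$ acts trivially on the $G(\RR)$-conjugacy class $X$ (central elements act trivially by conjugation). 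Combining the two verifications, $z_n\gamma_n$ is $\pi_G$-trivial, hence ${\pi_G}_\star(\wt{\mu_n}'') = {\pi_G}_\star(\wt{\mu_n}') = \mu_n$. The only genuinely delicate point is the identification of $\Gamma_Z$ as coming from rational central elements; reading the intersection in $G(\AAA)$ instead of $G(\RR\times\QQ_S)$ would force $\gamma=1$ and trivialise $\Gamma_Z$, so one must choose the ambient group correctly. Everything else is routine bookkeeping with the definition of $\pi_G$.
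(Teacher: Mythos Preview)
Your argument is correct and is essentially the paper's own proof. The paper packages the two invariances you use into a separate Lemma~\ref{LemmeInvariances}: your treatment of~$z_n$ is item~\eqref{item3} there (with~$z\in K\cap Z(\QQ_S)$), and your treatment of~$\gamma_n$ amounts to the paper's decomposition~$\gamma_n=\zeta_n c_n$ with~$\zeta_n\in Z(\RR)$ and~$c_n\in\Gamma_G\cap Z(\RR\times\QQ_S)$, followed by items~\eqref{item1} and~\eqref{item3}. Your identification of~$\Gamma_Z$ with~$S$-adic projections of~$\Gamma_G\cap Z(\QQ)$ is exactly the observation~$\Gamma_Z\subseteq Z(\RR)\cdot(\Gamma_G\cap Z(\RR\times\QQ_S))$ the paper records, and your remark that the ambient group must be~$G(\RR\times\QQ_S)$ rather than~$G(\AAA)$ is well taken.
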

\noindent Lemma~\ref{lemme compat2} is deduced from~\eqref{lem compat} using following invariance properties.

\begin{lem}\label{LemmeInvariances}
Let~$\mu$ be any probability measure on~$\Gamma_G\backslash G(\RR)\times G_S$.
\begin{subequations}
\begin{enumerate}
\item \label{item1} For any~$\gamma$ in $\Gamma_G\cap Z(\RR\times\QQ_S)$ we have
\begin{equation}\label{invariance 1}
\mu\cdot\gamma=\mu.
\end{equation}
\item \label{item2} For any~$k\in K_h\times K$, we have
\begin{equation}\label{inva 2}
	{\pi_G}_\star(\mu\cdot k)={\pi_G}_\star(\mu).
\end{equation}
\item \label{item3} For every~$g\in G(\QQ_S)$, and every~$ z\in K\cap Z(\QQ_S)$ or~$z\in Z(\RR)$,
\begin{equation}\label{seconde invariance}
{\pi_G}_\star(\mu\cdot z\cdot g)={\pi_G}_\star(\mu\cdot g).
\end{equation}
\end{enumerate}
\end{subequations}
\end{lem}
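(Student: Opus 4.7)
The plan is to unpack each statement into an elementary observation about the defining right actions on $\Gamma_G\backslash G(\RR\times\QQ_S)$, and then to cascade them: part~\eqref{item3} will be a formal consequence of parts~\eqref{item1}--\eqref{item2} via the centrality of $z$.

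For~\eqref{item1}, the point is that right-multiplication by a \emph{central} element $\gamma$ of $\Gamma_G$ descends trivially to $\Gamma_G\backslash G(\RR\times\QQ_S)$. Indeed, for any $g$, centrality gives $g\gamma=\gamma g$, and since $\gamma\in\Gamma_G$ we have $\Gamma_G\cdot g\gamma = \Gamma_G\cdot\gamma g = \Gamma_G\cdot g$. Hence $\mu\cdot\gamma=\mu$ for any probability $\mu$.

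For~\eqref{item2}, I would simply appeal to the factorisation of $\pi_G$ given just above~\eqref{piG}, which presents $\pi_G$ as the composition of the quotient map by the right action of $K_h\times tK_St^{-1}$, followed by an open-closed immersion into $\Sh_K(G,X)$. In particular $\pi_G$ is already invariant under the right action of this compact subgroup (which is what is meant here by $K_h\times K$), so its pushforwards are unchanged.

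For~\eqref{item3}, centrality of $z$ gives $\mu\cdot z\cdot g=\mu\cdot g\cdot z$, so setting $\nu=\mu\cdot g$ it suffices to show ${\pi_G}_\star(\nu\cdot z)={\pi_G}_\star(\nu)$. When $z\in K\cap Z(\QQ_S)$, centrality yields $tzt^{-1}=z\in K_S$, i.e.\ $z\in tK_St^{-1}$, so~\eqref{item2} applies to $(1,z)$. When $z\in Z(\RR)$, centrality makes the conjugation action $h\mapsto zhz^{-1}$ on $X$ trivial, so $Z(\RR)\subseteq K_h$, and~\eqref{item2} again applies, this time to $(z,1)$. I do not foresee any genuine obstacle: the lemma is a systematic bookkeeping exercise. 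The only subtle point is being explicit about how centrality converts the a priori different left vs.\ right (resp.\ real vs.\ $S$-adic) actions into invariances that are already encoded in the construction of $\pi_G$ and of $\Gamma_G$.
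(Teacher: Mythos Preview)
Your proof is correct and matches the paper's approach essentially line for line: part~\eqref{item1} via centrality commuting $\gamma$ past $g$ and then absorbing it into $\Gamma_G$, part~\eqref{item2} via the factorisation of $\pi_G$ through the right $(K_h\times tK_St^{-1})$-action, and part~\eqref{item3} by commuting $z$ past $g$ and reducing to~\eqref{item2}. If anything, you are slightly more explicit than the paper in justifying $Z(\RR)\subseteq K_h$ (trivial conjugation on $h$) and $z\in tK_St^{-1}$ (centrality makes the conjugation by $t$ vacuous), points the paper simply asserts.
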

\begin{proof}[Proof of \eqref{item1}] Note that~$\Gamma_G\cap Z(\RR\times\QQ_S)$ acts trivially on~$\Gamma_G\backslash G(\RR)\times G_S$,
as can be checked pointwise, for some~$\gamma\in Z(\RR\times\QQ_S)$ and~$\gamma\in\Gamma_S$, with
\[
\Gamma_G \cdot(g_\RR,g_S)\cdot \gamma
=
\Gamma_G \cdot \gamma\cdot (g_\RR,g_S)
=
\Gamma_G \cdot(g_\RR,g_S).
\]
By ``transport of structure'' it acts trivially its measure space.
\end{proof}
\begin{proof}[Proof of \eqref{item2}] 
As~$\pi_G$ factors through the right action of~$K_h\times K$, we have
\[
\forall k\in K\times K_h,~
\pi_G(x\cdot k)=\pi_G(x).
\]
Equivalently~${\pi_G}_\star(\delta_x\cdot k)={\pi_G}_\star(\delta_x)$. Concerning~$\mu$ such as in the statement, we may
compute
\begin{align*}
{\pi_G}_\star(\mu\cdot k)&={\pi_G}_\star\left(\int\delta_x\cdot k~\mu(x)\right)=\int{\pi_G}_\star(\delta_x\cdot k )~\mu(x)\\&
=\int{\pi_G}_\star(\delta_x )~\mu(x)={\pi_G}_\star\left(\int\delta_x~\mu(x)\right)={\pi_G}_\star(\mu).
\end{align*}
We used linearity and continuity of~${\pi_G}_\star$ on bounded measures, seen for instance as continuous linear form on continuous functions.\footnote{We recall that our spaces are `polish' (separable and metrizable) and hence they are Radon spaces: Borel probability measures are inner regular (cf \cite[INT Ch. IX, \S3 \no3 Prop.\,3]{BBKINT}).}
\end{proof}
\begin{proof}[Proof of \eqref{item3}]  As~$z\in Z(\RR\times\QQ_S)$,we may substitute~$z\cdot g=g\cdot z$. Replacing~$\mu$ by~$\mu\cdot g$ we may omit omit~$g$.
We have~$z\in Z(\RR)\leq K_h$ or~$z\in Z(\QQ_S)\cap K\leq K$. In either case we may apply~\eqref{inva 2}.
\end{proof}

\begin{proof}[Proof of Lemma~\ref{lemme compat2}] We note from definition~\eqref{def Gamma Z}
that
\[
\Gamma_Z\subseteq Z(\RR)\cdot \left(\Gamma_G\cap Z(\RR\times\QQ_S)\right).
\]
We may decompose accordingly~\[\gamma_n= \zeta_n\cdot c_n\] with~$\zeta_n\in Z(\RR)$ and~$c_n\in\Gamma_G\cap Z(\RR\times\QQ_S)$. We have
\[
\mu_{U_S}\cdot g_n = \mu_{U_S}\cdot d_n\cdot z_n\cdot \zeta_n\cdot c_n=\mu_{U_S}\cdot d_n\cdot z_n\cdot \zeta_n
\]
where we may omit~$c_n$ in the right-hand side thanks to~\eqref{invariance 1}. Applying~\eqref{seconde invariance}
to~$z=\zeta_n$ and to~$z=z_n$, we deduce
\[
{\pi_G}_\star\left(\mu_{U_S}\cdot d_n\cdot z_n\cdot \zeta_n\right)
=
{\pi_G}_\star\left(\mu_{U_S}\cdot d_n\cdot z_n\right)
=
{\pi_G}_\star\left(\mu_{U_S}\cdot d_n\right).
\]
The Lemma then follows from~\eqref{lem compat}.
\end{proof}

We conclude by noting that 
\[
	\ol{(h,tg_n)}=s_n=\ol{(h,td_n)}.
\]
We may follow the same proof as that of Lemma~\ref{lemme compat2}, but with~$\delta_{(h,tg_n)}$ instead of~$\mu_{U_S}$.
We are now reduced to the case~$g_n\in G^{\text der.}(\QQ_S)$. 

\subsubsection{Passing from~$U_S\leq G_S$ to~$\Omega\leq G^{\der}(\QQ_S)$.}
We now turn to the matter of replacing~$U_S$ by a sugbroup~$\Omega$ of~$G^{\der}(\QQ_S)$.

Let 
\begin{equation}\label{U S tilde}
\wt{U_S}:=U_S\cdot (K\cap Z(\QQ_S)).
\end{equation}
Note that this is a compact group.
We define
\begin{equation}\label{defi Omega}
	\Omega= \wt{U_S}\cap G^{\der}(\QQ_S).
\end{equation}


This~$\Omega$ is a compact group, and therefore carries a Haar probability.
Let~$\mu_\Omega$ be the \emph{direct image} of this Haar probability 
in the $S$-arithmetic quotient
\begin{equation}\label{Sar quotient1}
	\Gamma_G\backslash G(\RR\times \QQ_S).
\end{equation}
Let~
\begin{equation}\label{defi Gamma}
\Gamma=\Gamma_G\cap G^{\der}(\RR\times \QQ_S).
\end{equation}
This is an~$S$-arithmetic group in~$G^{\text der.}(\RR\times \QQ_S)$, and a \emph{lattice} by the $S$-arithmetic form of Borel and Harish-Chandra theorem (e.g.~\cite{GW} by Go\-de\-ment-Weil,
cf~\cite[\S 5.4]{PR}).
We dropped the indiex~in~$\Gamma$, as this will be the~$S$-arithmetic lattice involved when applying~\cite{RZ}, which is denoted by~$\Gamma$ in \emph{loc.\,cit.}

We identify the semisimple arithmetic quotient space
\begin{equation}\label{Sar quotient2}
\Gamma\backslash G^{\text der.}(\RR\times \QQ_S)
\end{equation}
with its image via the natural embedding~$\Gamma\cdot g\mapsto \Gamma_G\cdot g$ into the reductive arithmetic quotient space~\eqref{Sar quotient1}. The support of the probability measure~$\mu_\Omega$ is contained in $\Gamma\backslash G^{\text der.}(\RR\times \QQ_S)$.

We now define
\begin{equation}\label{attempt3}
		\widetilde{\mu_n}'''=\mu_\Omega\cdot g_n,
\end{equation}
as a probability measure on~$\Gamma\backslash G^{\text der.}(\RR\times \QQ_S)$.

Let now
\begin{equation}\label{pider}
	\pi:\Gamma\backslash G^{\text der.}(\RR\times \QQ_S)\to \Sh_K(G,X)
\end{equation}
be the restriction of~$\pi_G$.

We have thus reduced ourselves to the case $G = G^{\der}$.

\subsubsection{Passing to a splitting extension and the lifting property.}
Ensuring that these$~\widetilde{\mu_n}'''$ are still lifts of the~$\mu_n$, as in~\eqref{lem compat2}, will require a bit of extra work.

Recall from~\eqref{indice fini} that~$G^{\text der.}(\QQ_S)\cdot Z(\QQ_S)$ is 
an open subgroup of~$G_S$. 
By its construction in~\eqref{U S tilde}, the group~$\wt{U_S}$ contains an open subgroup of~$Z(\QQ_S)$.
The Lie algebra of~$\wt{U_S}$ hence contains that of~$Z(\QQ_S)$, and is then the sum of the latter with the Lie algebra of~$\Omega$.
Equivalently, the product
\begin{equation}\label{open galois}
	\left(\wt{U_S}\cap Z(\QQ_S)\cap K'\right)\cdot\Omega
\end{equation}
is an open subgroup of~$\wt{U_S}$; which has finite index, as~$U_S$ is compact.
After replacing~$E$ by the finite extension corresponding to this subgroup, we may assume that
\begin{equation}\label{eq 37}
	\wt{U_S}=\left(\wt{U_S}\cap Z(\QQ_S)\cap K'\right)\cdot\Omega,
\end{equation}
which implies
\begin{equation}
	\wt{U_S}\cap Z(\QQ_S)\subseteq Z(\QQ_S)\cap K'\subseteq Z(\QQ_S)\cap K.
\end{equation}
Thus replacing~$E$, we did not change~$\Omega$, nor the associated~$\widetilde{\mu_n}'''$.
We can now prove:


\begin{lem}\label{lemmelift}
The direct images of the measures~$\widetilde{\mu_n}'''$ from~\eqref{attempt3} above, along the map~$\pi$ from~\eqref{pider}, are given by
\begin{equation}\label{lifted3}
\forall n\geq 0,~\pi_\star(\widetilde{\mu_n}''')=\mu_n.
\end{equation}
\end{lem}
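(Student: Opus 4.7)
The plan is to reduce the lemma to the already-proved compatibility~\eqref{lem compat2}, that is~$\pi_{G,\star}(\mu_{U_S}\cdot g_n)=\mu_n$ (recalling that after the reduction of \S\ref{derived} we have written~$g_n$ for the~$d_n\in G^{\der}(\QQ_S)$ of~\eqref{decompo dzg}). First I would observe that, because $\Gamma=\Gamma_G\cap G^{\der}(\RR\times\QQ_S)$, the natural map~$\Gamma\backslash G^{\der}(\RR\times\QQ_S)\to\Gamma_G\backslash G(\RR\times\QQ_S)$ is injective, and~$\pi$ is simply the restriction of~$\pi_G$ along this embedding. Since~$\Omega\subseteq G^{\der}(\QQ_S)$ and~$g_n\in G^{\der}(\QQ_S)$, the support of~$\widetilde{\mu_n}'''=\mu_\Omega\cdot g_n$ lies in this subspace, so it is enough to prove $\pi_{G,\star}(\mu_\Omega\cdot g_n)=\mu_n$.

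Next, I would introduce the auxiliary probability~$\mu_{\widetilde U_S}$, defined as the direct image in~$\Gamma_G\backslash G(\RR\times\QQ_S)$ of the Haar probability on the compact group~$\widetilde U_S$ of~\eqref{U S tilde}, and compare it to both~$\mu_{U_S}$ and~$\mu_\Omega$. The comparison with~$\mu_{U_S}$ uses~$\widetilde U_S=U_S\cdot(K\cap Z(\QQ_S))$: because the factor~$K\cap Z(\QQ_S)$ is central in~$G$, Fubini lets us write
\[
\mu_{\widetilde U_S}\cdot g_n=\int_{K\cap Z(\QQ_S)}(\mu_{U_S}\cdot g_n)\cdot z\,dz
\]
(for a suitable probability~$dz$ on~$K\cap Z(\QQ_S)$ descending from the Haar measure), and item~\eqref{item3} of Lemma~\ref{LemmeInvariances} then yields $\pi_{G,\star}(\mu_{\widetilde U_S}\cdot g_n)=\pi_{G,\star}(\mu_{U_S}\cdot g_n)=\mu_n$. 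Similarly, using the factorisation~\eqref{eq 37}, namely~$\widetilde U_S=(\widetilde U_S\cap Z(\QQ_S)\cap K')\cdot\Omega$ with the first factor central and contained in~$K$, the same Fubini-plus-invariance argument gives
\[
\pi_{G,\star}(\mu_{\widetilde U_S}\cdot g_n)=\pi_{G,\star}(\mu_\Omega\cdot g_n).
\]
Combining the two chains of equalities yields~$\pi_{G,\star}(\mu_\Omega\cdot g_n)=\mu_n$, which is what we needed.

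The only real point demanding care, and arguably the main obstacle, is the Haar-measure bookkeeping in the two Fubini steps: the product maps~$U_S\times(K\cap Z(\QQ_S))\to\widetilde U_S$ and~$(\widetilde U_S\cap Z(\QQ_S)\cap K')\times\Omega\to\widetilde U_S$ are surjective with finite kernel (intersection of the two factors inside the central subgroup), so the Haar probability on~$\widetilde U_S$ is the pushforward of the product of the Haar probabilities on the factors. Once this is set up correctly, Fubini's theorem and the centrality of the~$Z$-factor make the integrals meaningful, and Lemma~\ref{LemmeInvariances}\,\eqref{item3} absorbs each pointwise $z$-translation. Everything else is formal.
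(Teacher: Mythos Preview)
Your proof is correct and follows essentially the same route as the paper: introduce the intermediate measure~$\mu_{\widetilde U_S}$, use the two factorisations~$\widetilde U_S=U_S\cdot(K\cap Z(\QQ_S))$ and~$\widetilde U_S=(\widetilde U_S\cap Z(\QQ_S)\cap K')\cdot\Omega$ to write it as a convolution two ways, and apply Lemma~\ref{LemmeInvariances}\,\eqref{item3} to kill the central factor under~$\pi_G$. The paper's argument is identical in structure, including the remark that the surjective product map of compact groups pushes the product Haar measure to the Haar measure on~$\widetilde U_S$.
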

This follows from~\eqref{lifted2} and the following equality, proven below,

\begin{equation}\label{lift compat}
\pi_\star(\widetilde{\mu_n}''')={\pi_G}_{\star}(\wt{\mu_n}'').
\end{equation}

\begin{proof}[Proof of~\eqref{lift compat}] Let~$U=\wt{U_S}\cap Z(\QQ_S)$ . Note that the map
\[
\Omega\times U\to \Omega\cdot U=\wt{U_S}.
\]
is a continuous map of compact groups.
It is surjective iun view of~\eqref{eq 37}. The image of the Haar probability measure is a probability which is invariant
under the image of the map. This is hence the Haar probability measure on~$\wt{U_S}$. The direct image measure is actually a convolution of measure. Pushing 
into~$\Gamma_G\backslash G(\RR\times \QQ_S)$, this convolution, in integral form, is defined as
\[
\mu_{\wt{U_S}}=\int_{u\in U} \mu_\Omega\cdot u~du\quad\text{on } \Gamma_G\backslash G(\RR\times \QQ_S)
\]
where the differential notation~$du$ denotes Haar probability measure on~$U$, and~$\mu_{\wt{U_S}}$ is the direct image of the Haar probability on~$\wt{U_S}$ in the~$S$-arithmetic space of~$G^\der$.
Similarly we can prove
\[
\mu_{\wt{U_S}}=\int_{k\in K\cap Z(\QQ_S)} \mu_{U_S}\cdot k~dk
\]
From~\eqref{item3} of Lemma~\ref{LemmeInvariances}, we get, with $dz$ the Haar probability on~$K\cap Z(\QQ_S)$,
\begin{eqnarray*}
{\pi_G}_\star(\mu_{\wt{U_S}}\cdot g_n)
&=&{\pi_G}_\star\left(\left(\int_{z\in K'\cap Z(\QQ_S)} \mu_{U_S}\cdot z~dz\right)\cdot g_n\right)\\
&=&{\pi_G}_\star\left(\int_{z\in K'\cap Z(\QQ_S)} \mu_{U_S}\cdot z\cdot g_n~dz\right)\\
&=&\int_{z\in K'\cap Z(\QQ_S)} {\pi_G}_\star(\mu_{U_S}\cdot z\cdot g_n)~dz\\
&=&\int_{z\in K'\cap Z(\QQ_S)} {\pi_G}_\star(\mu_{U_S}\cdot g_n)~dz\\
&=&{\pi_G}_\star(\mu_{U_S}\cdot g_n)\\
\end{eqnarray*}
In the same manner, we prove
\begin{equation}
{\pi_G}_\star(\mu_{\wt{U_S}}\cdot g_n)
=
{\pi_G}_\star(\mu_{\Omega}\cdot g_n).
\end{equation}
Finally, using Definitions~\eqref{attempt3}, \eqref{pider} we prove~\eqref{lift compat}
\begin{equation}
\pi_\star(\wt{\mu_n}''')={\pi_G}_\star(\mu_{\Omega}\cdot g_n)={\pi_G}_\star(\mu_{\wt{U_S}}\cdot g_n)={\pi_G}_\star(\mu_{U_S}\cdot g_n).\qedhere
\end{equation}
\end{proof}


\subsubsection{Conclusion.}
We now have a sequence~$(\wt{\mu_n}''')_{n\geq0}$ which lifts the sequence~$(\mu_n)_{n\geq0}$, and is the translate~$\mu_\Omega\cdot g_n$ of a probability~$\mu_\Omega$ coming from~$\Omega\leq G^\der(\QQ_S)$ by elements~$g_n$ from~$G^\der(\QQ_S)$, with respect to the semisimple group~$G^\der$. This lifted setting is the setting of~\cite{RZ}. We will now need to verify the hypothesis of~\emph{loc.~cit.}

\subsection{The analytic stability hypothesis.}\label{sous section stabilite}

In this section we will further alter the
sequence~$(\wt{\mu_n}''')_{n\geq0}$ in order to to be able to apply the results of \cite{RZ} to it. 
 

To apply results of \cite{RZ}, the sequence~$(g_n)_{n\geq 0}$ must satisfy a technical ``analytic stability'' hypothesis of~\cite{RZ}, which very loosely speaking means that the ``direction'' in which the~$g_n$ diverge is ``not too close to that of the centraliser of~$\Omega$''.

Recall that~$M$ denotes the Mumford-Tate group of $s$, and that~$H_S$ is the $\QQ_S$-algebraic envelope of the
$S$-adic monodromy group~$U_S$.

Since $s$ satisfies the $S$-Shafarevich assumption, by Proposition \ref{charSha},
we know that
\[
Z_{G}(M)(\QQ_S)\backslash Z_{G_S}(H_S).
\]
is compact. Then the closed subspace 
\[
Z_{G^{\der}}(M)(\QQ_S)\backslash Z_{G^{\der}_S}(H_S)
\]
is also compact. We may find a compact subset~$C$ of~$Z_{G^{\der}_S}(H_S)$ such that
\begin{equation}\label{eq 43}
Z_{G^{\der}_S}(H_S)=Z_{G^{\der}}(M)(\QQ_S)\cdot C.
\end{equation}

We will denote~$Z_S = Z_{G_S^{der}}(H_S)$.
Hence
\[
	Z_{G^{\der}(\RR\times\QQ_S)}(\Omega)=G^{\der}(\RR)\times Z_S.
\]

Recall that the centraliser $Z_{G^{der}}(M)$ is $\RR$-anisotropic, 
as it is contained in the centraliser of the point~$h\in X$, which is compact subgroup. It is a fortiori a~$\QQ$-anisotropic group.

We are in a trivial instance of Godement compactness criterion. There is a compact subset $\cF \subset  Z_{G^{der}}(M)(\RR\times\QQ_S)$ (a ``fundamental set'') such that
\begin{equation}\label{Godement}
Z_{G^{der}}(M)(\RR\times\QQ_S)=\left(Z_{G^{der}}(M)\cap \Gamma_S\right)\cdot \cF.
\end{equation}
Combining with~\eqref{eq 43} we get
\begin{equation}\label{Godement 2}
Z_S=\left(Z_{G^{der}}(M)\cap \Gamma_S\right)\cdot F\text{ with }F=\cF\cdot C
\end{equation}
a compact subset of~$Z_S$.

By the results of~\cite{RS} and~\cite[Partie~2]{R}, there exists a subset~
\[Y=\{1_\RR\}\times Y_S\subseteq G_S,\] where~$1_\RR$ denotes the neutral element of~$G(\RR)$, such that 
\begin{itemize}
\item we have a ``$S$-adic Mostow decomposition''
\begin{equation}\label{Mostow}
G^{\text{der.}}(\RR\times\QQ_S)=Z_{G^{der}}(\Omega)(\RR\times\QQ_S)\cdot (\{1_\RR\}\times Y_S),
\end{equation}
which at finite places becomes
\begin{equation}\label{Mostow2}
G^{\text{der.}}(\QQ_S)=Z_S\cdot Y_S,
\end{equation}
\item any sequence~$(g_n)_{n\geq0}$ in~$\{1_\RR\}\times Y_S$ satisfies the ``analytic stability'' property\footnote{\label{footnote stab 1}Namely the~$y^{-1}$ in~${Y_S}^{-1}$ are such that for any representation~$\rho:G\to GL(N,\QQ_S)$, there is a constant~$c=c({\rho,\Omega})$ such that for any vector~$v\in{\QQ_S}^N$, the action of~$y^{-1}$ one cannot shrink~$\Omega\cdot V$ uniformly by a factor bigger than~$c$: one has~
\begin{equation}\label{eq footnote}
\sup_{\omega\in\Omega} \Nm{y^{-1}\cdot\omega\cdot v}\geq c\cdot\Nm{v}
\end{equation}
with respect, say, to some standard product norm on~$\QQ_S^N$. 
See footnote~\ref{footnote stab 2}.}
of~\cite{RZ} with respect to~$\Omega$ (for~$y^{-1}$ in~$Y_S$ as we deal with left arithmetic quotient~$\Gamma\backslash G$). Here we used that the $\QQ_S$-algebraic envelope~$H_S\cap G_S^\der$ of~$\Omega$ is reductive.
\end{itemize}
Moreover we may, in~$G^{\der}(\RR\times\QQ_S)$, multiply~$\{1_\RR\}\times Y_S$ on the left by a compact subset of~$G^{\der}(\QQ_S)\times Z_S$ (and on the right by any compact subset of~$G$) and the product will still satisfy previous properties\footnote{\label{footnote stab 2}We refer to footnote~\ref{footnote stab 1}. Let~$F$ resp.~$C$ be a compact subset of~$Z_S$ resp.~$G$. For~$(\gamma,y,f,v)\in C\times Y_S\times F\times {\QQ_S}^N$ we have~$\gamma^{-1}\cdot y^{-1}\cdot f^{-1}\cdot\omega\cdot v=\gamma^{-1}\cdot y^{-1}\cdot\omega\cdot f^{-1}\cdot v$. Let~$c_1$ and~$c_2$ be the maximum of the operator norm of~$\rho(\gamma)$ resp.~$\rho(f)$ for~$\gamma$ in~$C$ resp.~$f$ in~$F$. For instance one has~$\Nm{v}=\Nm{f\cdot (f^{-1} \cdot v)}\leq c_2\cdot \Nm{f^{-1}\cdot v}$.
We have
\[
\Nm{\gamma^{-1} y^{-1} f^{-1} \omega\cdot v}
=
\Nm{\gamma^{-1} y^{-1} \omega \cdot f^{-1} v}
\geq c_1\cdot\Nm{y^{-1} \omega \cdot f^{-1} v}
\geq c_1 c\cdot \Nm{f^{-1} v}\\
\geq c_1 c c_2\cdot\Nm{v}.
\]
This proves~\eqref{eq footnote} of footnote~\ref{footnote stab 1} for $F\cdot Y_S\cdot C$ with the constant~$c_1\cdot c\cdot c_2$.
}. This is the case of the subset
\begin{equation}\label{def Y'}
Y'=F\cdot \left(\{1_\RR\}\times Y_S\right)\cdot Z_{G^{\der}}(M)(\RR)\subseteq G^{\der}(\RR\times\QQ_S).
\end{equation}
Let us now decompose~$g_n$ according to~\eqref{Mostow2}
\[
\forall n\geq0, \exists z_n\in  Z_{G^{der}}(M)(\QQ_S),\exists y_n\in Y_S,~
	g_n= z_n\cdot y_n,
\]
and then decompose~$z_n$ inside~$Z_{G^{der}}(M)(\RR\times\QQ_S)$ with respect to the decomposition~\eqref{Godement 2},
\[
	z_n= \gamma_n\cdot f_n,\text{ with }\gamma_n\in \Gamma_S\cap Z_{G^{der}}(M)(\RR\times\QQ_S)\text{ and }f_n\in F.
\]
We recall from~\eqref{invariance 1} that
\begin{equation}\label{gamma invariance}
\mu_\Omega\cdot \gamma_n=\mu_\Omega.
\end{equation}

Substituting~$g_n=\gamma_n\cdot f_n\cdot y_n$ with~$f_n \cdot y_n$ we may, and will, assume that~$\gamma_n=1$.
But doing so we lose the property that~$g_n\in G_S$, as~$\gamma_n$ may be non  trivial at the real place.
Instead we define~
\begin{equation}\label{def gn'}
g^\prime_n={\gamma_n}^{-1}\cdot g_n\cdot {\gamma_{n,\RR}},
\end{equation}
where~${\gamma_{n,\RR}}$ is the real place factor of~$\gamma_n$.
We then have~$g^\prime_n\in G_S$ as we ensured it be trivial at the real place. Define, in our final attempt to lift the~$\mu_n$,
\begin{equation}
	\wt{\mu_n}=\mu_\Omega\cdot g^{\prime}_n.
\end{equation}
We observe that definition~\eqref{def gn'} agrees with~\eqref{def Y'}: we have
\begin{equation}\label{g' dans Y'}
\forall n\geq 0, g'_n\in Y'.
\end{equation}

\begin{prop}\label{prop compat}
The sequence~$(g^\prime_n)_{n\geq0}$ satisfies the analytic stability hypothesis
required by~\cite[Theorem~3]{RZ}. Furthermore, we have
\begin{equation}\label{final compat}
\pi_\star\left( \mu_\Omega\cdot g^\prime_n\right)=\mu_n.
\end{equation}
\end{prop}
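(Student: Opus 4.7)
The plan is to verify the two assertions of Proposition~\ref{prop compat} separately, leveraging the constructions made in the preceding subsections.

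For the analytic stability hypothesis, I would invoke~\eqref{g' dans Y'}, which places each~$g^\prime_n$ in~$Y^\prime$. By its definition~\eqref{def Y'}, $Y^\prime$ is obtained from~$\{1_\RR\}\times Y_S$ by left multiplication by the compact set~$F=\cF\cdot C$ and right multiplication by~$Z_{G^{\der}}(M)(\RR)$, the latter being compact because~$Z_{G^{\der}}(M)$ is~$\RR$-anisotropic (it sits inside the stabilizer~$K_h$ of~$h$). The remark following~\eqref{Mostow2}, verified explicitly in footnote~\ref{footnote stab 2}, then ensures that the analytic stability property enjoyed by~$Y_S$ with respect to~$\Omega$ --- valid because the~$\QQ_S$-algebraic envelope~$H_S\cap G_S^{\der}$ of~$\Omega$ is reductive --- is inherited by every element of~$Y^\prime$, and in particular by each~$g^\prime_n$.

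For the pushforward identity~\eqref{final compat}, I would start from Lemma~\ref{lemmelift}, which gives $\pi_\star(\mu_\Omega\cdot g_n)=\mu_n$, and use the factorization $g^\prime_n=\gamma_n^{-1}\cdot g_n\cdot\gamma_{n,\RR}$ from~\eqref{def gn'} to eliminate the two auxiliary factors one at a time. The factor~$\gamma_{n,\RR}$ lies in~$Z_{G^{\der}}(M)(\RR)\subseteq K_h$ (the centralizer of the Mumford--Tate group fixes~$h$), so the argument of Lemma~\ref{LemmeInvariances}\eqref{item2} applied to the map~$\pi$ (which likewise factors through the right action of~$K_h$) shows that right multiplication by~$\gamma_{n,\RR}$ is invisible to~$\pi_\star$. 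The factor~$\gamma_n^{-1}$ is absorbed into~$\mu_\Omega$: since $\gamma_n\in Z_{G^{\der}}(M)$ commutes with every element of~$\Omega\subseteq M(\QQ_S)$, the substitution $\omega\cdot\gamma_n^{-1}=\gamma_n^{-1}\cdot\omega$ together with $\gamma_n^{-1}\in\Gamma$ gives $\Gamma\cdot\omega\gamma_n^{-1}=\Gamma\cdot\omega$ on the quotient, hence $\mu_\Omega\cdot\gamma_n^{-1}=\mu_\Omega$. Combining these two reductions yields $\pi_\star(\mu_\Omega\cdot g^\prime_n)=\pi_\star(\mu_\Omega\cdot g_n)=\mu_n$.

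The main technical item to pin down is the membership~$\gamma_n\in\Gamma$: Godement's cocompactness criterion applied to the~$\RR$-anisotropic group~$Z_{G^{\der}}(M)$ produces~$\gamma_n$ in some arithmetic subgroup of~$Z_{G^{\der}}(M)(\RR\times\QQ_S)$, and one must choose the fundamental set~$\cF$ of~\eqref{Godement} so that this arithmetic subgroup is actually contained in~$\Gamma=\Gamma_G\cap G^{\der}(\RR\times\QQ_S)$, rather than merely commensurable with it. Once this compatibility is arranged --- which amounts to refining~$\cF$ by finitely many translates from~$\Gamma_G$ --- all the remaining steps are routine manipulations of Haar measures and of the quotient topology.
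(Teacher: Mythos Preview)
Your proposal is correct and follows essentially the same route as the paper's proof: cite~\eqref{g' dans Y'} for analytic stability, then eliminate~$\gamma_n^{-1}$ on the left via~$\mu_\Omega\cdot\gamma_n^{-1}=\mu_\Omega$ (the paper's~\eqref{gamma invariance}) and~$\gamma_{n,\RR}$ on the right via the~$K_h$-invariance of~$\pi$, reducing to Lemma~\ref{lemmelift}. One minor inaccuracy: the inclusion~$\Omega\subseteq M(\QQ_S)$ need not hold literally, since~$\Omega=\wt{U_S}\cap G^{\der}(\QQ_S)$ with~$\wt{U_S}=U_S\cdot(K\cap Z(\QQ_S))$ involving the centre of~$G$; but this is harmless, because~$\gamma_n\in Z_{G^{\der}}(M)$ centralises both~$U_S\subseteq M(\QQ_S)$ and~$Z(\QQ_S)$, hence all of~$\Omega$.
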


\begin{proof}[Proof of~\ref{final compat}] The analytic stability hypothesis is given by~\eqref{g' dans Y'}. It remains to prove the lifting compatibility~\eqref{final compat}.

We have, using~\eqref{gamma invariance}, and recalling definition~\eqref{attempt3},
\[
\wt{\mu_n}=\mu_\Omega\cdot{\gamma_n}^{-1}\cdot g_n\cdot  {\gamma_{n,\RR}}=\mu_\Omega\cdot g_n\cdot  {\gamma_{n,\RR}}=\wt{\mu_n}'''\cdot{\gamma_{n,\RR}}.
\]
But recall that~${\gamma_{n,\RR}}\in Z_{G^{der}}(M)(\RR)$ a compact group which is \emph{included in the stabiliser~$K_h$ of~$h$ in~$X$}. Similarly to~\eqref{seconde invariance}, we deduce that the measures~$\wt{\mu}_n$ and~$\wt{\mu}_n'''$, though they maybe different
as measures on~$\Gamma_S\backslash G^{\text der.}(\RR\times \QQ_S)$,
will satisfy
\[
{\pi_G}_\star(\wt{\mu}_n)={\pi_G}_\star(\wt{\mu}_n''').
\]
We are done: by recalling Lemma~\ref{lemmelift}, and substituting~$\pi$ for~$\pi_G$, as we are dealing with measures supported on~$\Gamma\backslash G^{\text{der.}}(\RR\times\QQ_S)$.
\end{proof}

\subsection{Invoking the Theorem of Richard-Zamojksi}\label{invocation}
We now are in a position to apply~\cite[Theorem 3]{RZ}.
We summarize here how to invoke~\cite[Theorem 3]{RZ}.

\subsubsection{Reviewing the hypotheses.}

\paragraph{About the ambiant group~$G$ and its $S$-arithmetic quotient}
The semisimple algebraic  group~$G$ of~\cite{RZ} is the derived subgroup~$G^{\text der.}$ of our
initial~$G$.
The finite set of places considered in~\cite{RZ} is our set~$S$
together with the archimedean place. The $S$-arithmetic lattice
is our~$\Gamma$ defined by~\eqref{defi Gamma} and~\eqref{defi Gamma G}. 
This defines the ambient~$S$-arithmetic space~$\Gamma\backslash G^{\text der.}(\RR\times\QQ_S)$
in our notations.

\paragraph{About the piece of orbit~$\Omega$}
The~$\Omega$ of~\cite{RZ} is~$\{1_\RR\}\times\Omega$ here. The image~$U_S$ of the representation of the Galois group is a compact~$S$-adic Lie subgroup of~$M(\QQ_S)$. It follows that~$\Omega$, as defined in~\eqref{U S tilde} and~\eqref{defi Omega}, is a bounded~$S$-adic Lie subgroup. It is Zariski connected 
is indeed a bounded $S$-adic Lie subgroup.
 We postpone the proof that its $\QQ_S$-algebraic envelope~$H$ is a Zariski connected subgroup of~$G_S$ to~\ref{H Omega} below.

\paragraph{About the sequence of translates.}
The measure~$\mu_\Omega$ to be translated fits into the scope of study of~\cite{RZ}.

The translating elements~$g_n$ of~\cite{RZ} are our~$g^\prime_n$.
The ``analytic stability'' hypothesis has been taken care of in the preceding section.

\subsubsection{Applying Theorem of~{\cite{R}}}
As a consequence of this analytic stability hypothesis, we may apply
\cite[Th\'{e}or\`{e}me~1.3, Exp.~VI, p.~121]{R} with $H = H_S$ and $Y_S = \{ y_n \}$ and $f$ the characteristic function of $\Omega$ to the sequence of translated probabilities
\[
	\wt{\mu_n}=\mu_\Omega\cdot g_n^\prime
\]
on the $S$-arithmetic homogeneous space
\begin{equation}\label{Homog space}
	\Gamma\backslash G^\text{der.}(\RR\times\QQ_S).
\end{equation}
Thus the sequence~$(\wt{\mu_n}^\prime)_{n\geq0}$ is tight: 
any subsequence contains a subsequence converging to a probability measure.

In particular, after possibly extracting a subsequence, we may assume the sequence~$(\wt{\mu_n}^\prime)_{n\geq0}$ is tightly convergent.

We may now invoke~\cite[Theorem~3]{RZ}. 

\subsubsection{The envelope of~$\Omega$}\label{H Omega} We recall that the $\QQ_S$-algebraic envelope of~$U_S$, the algebraic monodromy group, is denoted~$H_S$, and that~$H_S$ is a reductive group. Here we will determine the $\QQ_S$-algebraic envelope of~$\Omega$, in terms of that of~$H_S$. We discuss why it is a reductive group, and how to ensure it is Zariski connected.

Firstly the  $\QQ_S$-algebraic envelope~$\widetilde{H}_S$ of~$\widetilde{U}_S$ is the algebraic group generated by~$U_S$ and~$K\cap Z(\QQ_S)$. The $\QQ_S$-algebraic envelope of~$Z(\QQ_S)\cap K$ is a Zariski open subgroup~$\widetilde{Z}$  of~$Z(\QQ_S)$: one has~$Z^0\leq \widetilde{Z}\leq Z$. Then
\[
\widetilde{H}_S=H_S\cdot \widetilde{Z}.
\]

Let~$H$ be the  $\QQ_S$-algebraic envelope~$\Omega$. One has~$H\leq G_S^{\der}$ and~$H\leq \widetilde{H}_S$ as~$\Omega$ is contained in both groups. 

Let~$\mathfrak{u},\wt{\mathfrak{u}},\mathfrak{\omega},\mathfrak{z},\mathfrak{h}_S,\wt{\mathfrak{h}}_S,\mathfrak{h}$ be the Lie algebras of~$U$, $\wt{U}_S$, $\Omega$, $Z$, $H_S$ and~$H$. All sums and direct sums will be sum of linear spaces, and the resulting sums will be Lie algebras. 
We have~$\wt{\mathfrak{u}}={\mathfrak{u}}+{\mathfrak{z}}$. The decomposition~${\mathfrak{g}}={\mathfrak{g}}^\der\oplus {\mathfrak{z}}$ induces~$\wt{\mathfrak{u}}={\mathfrak{\omega}}\oplus{\mathfrak{z}}$. Taking algebraic envelopes of Lie subalgebras is compatible with sums. We get
\begin{equation}\label{Lie sums}
\wt{\mathfrak{h}}_S=\mathfrak{h}_S+\mathfrak{z}=\mathfrak{h}\oplus \mathfrak{z},
\end{equation}
and deduce~$\mathfrak{h}=\wt{\mathfrak{h}}_S\cap \mathfrak{g}^\der$. This establishes that~$H$ is a Zariski open subgroup of~$\widetilde{H}_S\cap G_S^\der$. 

The product map~$(h,z)\mapsto h\cdot z$ is an homomorphism, as~$H$ and~$Z$ commute.
At the level of the groups, we deduce an isogeny
\[
H^0\times Z^0\to {H_S}^0\cdot Z^0.
\]	
Recall that~$H_S$ and~$Z$ are reductive. Hence~${H_S}^0\times Z^0$ is reductive too, and so is its quotient~${H_S}^0\cdot Z^0$. The finite cover~$H^0\times Z^0$ must then be reductive, and so is its direct factor~$H^0$. This establishes that~$H$ is a reductive group.

The Zariski neutral component~$H^0$ of~$H$ is that~$(\widetilde{H}_S\cap G_S^\der)^0$, and we have~$H=H^0$ if and only if				
\[
\Omega\subseteq H^0(\QQ_S)=(\widetilde{H}_S\cap G_S^\der)^0(\QQ_S).
\]
As~$H^0(\QQ_S)$ is open in~$H(\QQ_S)$, there is a neighbourhood~$V$ of~$H^0(\QQ_S)$ in~$G_S$ that doesnot meet~$H(\QQ_S)\smallsetminus H^0(\QQ_S)$. Note that~$H^0$ does only depend on~$H_S$, not on~$\Omega$ or~$K$. Provided~$K$ and~$U_S$ are sufficiently small, we may ensure that~$\Omega\subseteq U_S\cdot K\subseteq V$. In such a case, the group~$H$ is Zariski connected.


\subsection{The equidistribution property}
We apply~\cite[Theorem~3]{RZ} in the setting recalled in~\S\ref{invocation}. 
Then, possibly passing to a subsequence of~$\left(\wt{\mu}_{n}\right)_{n\geq0}$,
there exist
\begin{itemize}
\item a~$\QQ$-subgroup~$L\leq G^{\text der.}$ of $S$-Ratner type (as in Definition~\ref{defi S Ratner class}),
\item and elements~$n_\infty,g_\infty\in G^\text{der.}(\RR\times\QQ_S)$
\end{itemize}
such that the limit 
\[
\lim_{n\to\infty} \wt{\mu}_{n}= \mu_{\Lpp}\!\!\cdot n_\infty \star \nu\cdot g_\infty
\]
is the translate by~$g_\infty$ of the convolution of the translated probability measure~$\mu_{\Lpp}\cdot n_\infty$ by the Haar probability~$\nu$ on~$\Omega$.
In terms of Radon measures on~$\Gamma_S\backslash G^{\text der.}(\RR\times \QQ_S)$,
applied to an arbitrary bounded continuous test function~$f$ in~$C^b(\Gamma_S\backslash G^{\text der.}(\RR\times \QQ_S))$, this means
\[
	\lim_{n\to\infty}
		\int f\wt{\mu}_{n}	
=
	\int\! f\mu_{\Lpp}  n_\infty\star \nu\cdot g_\infty
=
	\int_{\omega\in\Omega}\!
		\int_{l\in\Gamma \Lpp} 
			f (l\cdot n_\infty\cdot \omega\cdot g_\infty)
		~\mu_{\Lpp} (l)
	\nu(\omega).
\]

This proves the equidistribution conclusion~\eqref{TheoremeLimite} of Theorem~\ref{Theoreme1}.

\subsubsection{Getting rid of~$n_\infty$.}

After replacing by a subsequence, all the conclusions
of~\cite[Theorem~3]{RZ} hold. In particular we also know that~$n_\infty$
belongs to the closure of~$(\Gamma\cap N)\cdot (Z_{G_S^\der}(\Omega)\cap N)$.
We claim that 
\begin{equation}\label{eq 56}
(\Gamma\cap N)\cdot (Z_{G_S^\der}(\Omega)\cap N)
\end{equation}
is already a closed subset.
\begin{proof} By the $S$-Shafarevich hypothesis, the quotient
\[
Z_{G_S}(\Omega)/Z_{G_S}(M)
\]
is compact. It follows that for any subgroup~$Z$ of~$Z_{G_S}(\Omega)$
the quotient
\[
Z/(Z\cap Z_{G_S}(M))
\subseteq 
Z_{G_S}(\Omega)/Z_{G_S}(M)
\]
is compact. In particular, for~$Z=Z_N(\Omega)=N\cap Z_{G_S^\der}(\Omega)$, we may write
\begin{equation}\label{eq 56b}
Z_{N}(\Omega)=Z_{N}(M)\cdot C
\end{equation}
for a compact subset~$C$. The closedness of~\eqref{eq 56}, that is of 
\[
(\Gamma\cap N)\cdot (Z_{G_S^\der}(\Omega)\cap N)=(\Gamma\cap N)\cdot (Z_{G_S^\der}(M)\cap N)\cdot C
\]
will follow from that of
\[
(\Gamma\cap N)\cdot (Z_{G_S^\der}(M)\cap N).
\]

Let~$F$ be a finite generating subset of~$M$, as a topological group for the Zariski topology, defined over~$\QQ$. Then the orbit map for the adjoint action
\[
G_S^{\der}\xrightarrow{g \mapsto (gfg^{-1})_{f\in F}}  (G_S^{\der})^F
\]
embeds~$G_S/Z_{G_S^\der}(\Omega)$ as a subvariety of~ the affine variety~$G_S^\der$. (In particular~$G_S/Z_{G_S^\der}(\Omega)$ is quasi-affine). We can linearise this action by choosing a faithful representation~$G_S^\der\to GL(N)$ and embedding~$GL(N)^F$ into~$V={M_N}^F$ a Cartesian power of the corresponding matrix space.

Let~$p$ be a generator of~$\det\mathfrak{l}\subseteq\bigwedge^{\dim L}\mathfrak{g}$, the maximal exterior power of the Lie algebra of~$L$. Then~$N$ is defined as the stabiliser of~$p$ in~$G_S^\der$. The orbit map
\[
G_S^{\der}\xrightarrow{g \mapsto g\cdot p=\bigwedge^{\dim L}(\ad_g)(p)}  \bigwedge^{\dim L}\mathfrak{g}
\]
at~$p$ embeds~$G/N$ into the affine variety~$W\bigwedge^{\dim L}\mathfrak{g}$. 

We deduce, with the product map, an embedding
\begin{equation}\label{eq 57}
G/(Z_{G_S^\der}(\Omega)\cap N)\to V\times W,
\end{equation}
the orbit map at~$((f)_{f\in F},p)$.

By definition~$\Gamma$ stabilises an lattice~$\Lambda$ in the free~$\QQ_S$ module~$V\times W$ defined over~$\QQ$, which is arithmetic, that made of~$\QQ$ rational elements. Up to scaling we may assume~$((f)_{f\in F},p)\in\Lambda$. It follows that
\[
(\Gamma\cap N)\cdot ((f)_{f\in F},p)
\]
which is a subset of~$\Lambda$, is discrete, and in particular closed. It follows that its inverse image in~$G_S^\der$,
which is~\eqref{eq 57} is closed as well.
\end{proof}
We may write~$n_\infty=\gamma\cdot z$ with~$\gamma\in \Gamma\cap N$
and~$z\in Z_{G_S^\der}(\Omega)$. We claim that
\[
\mu_{\Lpp}\cdot \gamma=\mu_{\Lpp},
\]
which is proven below. We view~$\nu$ as a measure on~$G_S^\der$ supported on~$\overline{\Omega}$. (Actually~$\overline{\Omega}=\Omega$). We also have
\[
z\cdot \nu=\nu\cdot z.
\]
So we may rewrite the limit measure
\[
\mu_{\Lpp}\!\!\cdot n_\infty \star \nu\cdot g_\infty
=
\mu_{\Lpp}\!\! \star \nu\cdot z\cdot g_\infty.
\]
Subsituting~$g_\infty=z\cdot g_\infty$ we may assume~$n_\infty=1$.
\begin{proof}[Proof of the claim]
Recall that~$L$ is a normal subgroup of~$N$.
Consequently~$\Gamma\cap L(\RR\times\QQ_S)$ is normal in~$\Gamma\cap N(\RR\times\QQ_S)$.
The subgroup~$L(\RR\times\QQ_S)^+$ is normalised by~$N(\RR\times\QQ_S)$, hence by~$\Gamma\cap N(\RR\times\QQ_S)$. We have seen both factors of
\[
L(\RR\times\QQ_S)^+\cdot\left(\Gamma\cap L(\RR\times\QQ_S)\right)
\]
are normalised by~$\Gamma\cap N(\RR\times\QQ_S)$. As~$\Gamma\cap N(\RR\times\QQ_S)$
acts continuously, it normalises the closure of this product, namely~$\Lpp$.

By definition~$\mu_{\Lpp}$ is the right~$\Lpp$-invariant probability on~$\Gamma\backslash\Gamma\Lpp$. It follows that~$\mu_{\Lpp}\gamma$
is the right~$\gamma^{-1}\Lpp\gamma$-invariant probability on~$\Gamma\backslash\Gamma\Lpp\gamma$. We just have seen~$\gamma^{-1}\Lpp\gamma=\Lpp$. But we also have
\[
\Gamma\backslash\Gamma\Lpp\gamma
=
\Gamma\backslash\Gamma\gamma^{-1}\Lpp\gamma
=
\Gamma\backslash\Gamma\Lpp.
\]
It follows that~$\mu_{\Lpp}\gamma=\mu_{\gamma^{-1}\Lpp\gamma}=\mu_{\Lpp}$.
\end{proof}

\section{Focusing criterion and Internality of equidistribution}\label{SectionRZ}

In this section we prove conclusion~\eqref{TheoremeInner} of Theorem~\ref{Theoreme1}, the inclusion of  supports of  the measures~$\mu_n$ in the support of the limiting measure. This inclusion of supports is required in the reasoning of section~\ref{consequence}.

\begin{lem}
For all $n$ large enough, we have the inclusion
$$
{\rm Supp}(\mu_n) \subset \pi(\Supp(\mu_{\infty}))
$$
of closed subsets in~$\Sh_K(G,X)$.
\end{lem}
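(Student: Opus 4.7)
The plan is to lift the desired inclusion to the $S$-arithmetic homogeneous space $\Gamma\backslash G^{\der}(\RR\times\QQ_S)$, where the supports of the measures at play admit explicit descriptions, and to use the \emph{focusing} (or internality) content of \cite[Th.~3]{RZ}, which goes beyond the mere convergence of measures established in Section~\ref{Reduction}. By Proposition~\ref{prop compat}, $\pi_\star(\wt{\mu_n})=\mu_n$, hence $\Supp(\mu_n)=\pi(\Supp(\wt{\mu_n}))=\pi(\Gamma\cdot\Omega\cdot g'_n)$. In the same vein, the limit measure $\wt{\mu}_\infty=\mu_{\Lpp}\star\nu\cdot g_\infty$ is supported on $\Gamma\cdot\Lpp\cdot\Omega\cdot g_\infty$, and under $\pi$ this support maps to $\pi(\Supp(\mu_\infty))$ in the sense of the statement. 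It therefore suffices to establish the stronger lifted inclusion
\[
\Omega\cdot g'_n\;\subseteq\; \Gamma\cdot\Lpp\cdot\Omega\cdot g_\infty \qquad\text{for all }n\gg 0,
\]
whose image under $\pi$ gives the conclusion of the lemma.

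The key input is the focusing statement which is built into the proof of \cite[Th.~3]{RZ} via the Dani--Margulis linearisation technique: in addition to producing the limit $\wt{\mu}_\infty$, it forces the translates $g'_n$, for $n$ sufficiently large, into a specific neighbourhood of $\Gamma\cdot\Lpp\cdot Z_{G^{\der}_S}(\Omega)\cdot g_\infty$. Concretely, one extracts, for $n\gg 0$, a factorisation
\[
g'_n \;=\; \gamma_n\cdot \lambda_n\cdot z_n\cdot g_\infty
\]
with $\gamma_n\in\Gamma\cap N_{G^{\der}}(L)$, $\lambda_n\in\Lpp$, and $z_n$ in a fixed compact subset of $Z_{G^{\der}_S}(\Omega)$. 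The $S$-Shafarevich hypothesis enters here through Proposition~\ref{charSha}, which guarantees that $Z_{G^{\der}_S}(M)\backslash Z_{G^{\der}_S}(\Omega)$ is compact; combined with the fact that $z_n$ commutes with $\Omega$ and lies at bounded distance from $\left(Z_{G^{\der}}(M)(\RR\times\QQ_S)\right)\cap\Gamma$, this lets one rewrite
\[
\Omega\cdot g'_n \;=\; \gamma_n\cdot\lambda_n\cdot z_n\cdot\Omega\cdot g_\infty \;\subseteq\; \Gamma\cdot\Lpp\cdot\Omega\cdot g_\infty
\]
after absorbing $z_n$ into $\gamma_n$ up to the right action of $K_h\times K$ killed by $\pi$.

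The main obstacle is the harvesting of the set-theoretic focusing decomposition from \cite{RZ}, which is phrased primarily in terms of weak convergence of measures rather than a formal containment of supports. Securing this requires revisiting the Dani--Margulis linearisation step in the proof of loc.\,cit., and invoking the closedness of $(\Gamma\cap N)\cdot Z_N(\Omega)$ established around~\eqref{eq 56} in Section~\ref{Reduction} as well as the closedness of the orbit $\Gamma\cdot\Lpp$ in $\Gamma\backslash G^{\der}(\RR\times\QQ_S)$. Once this focusing decomposition is in hand, the desired inclusion $\Supp(\mu_n)\subseteq\pi(\Supp(\mu_\infty))$ for $n$ large follows by direct pushforward along $\pi$.
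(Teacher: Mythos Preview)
Your overall strategy---lift to $\Gamma\backslash G^{\der}(\RR\times\QQ_S)$, invoke the focusing part of \cite[Th.~3]{RZ}, and push forward along $\pi$---is the paper's strategy. However, the factorisation you claim to ``extract'' from \cite{RZ} is not what the focusing criterion actually delivers, and this is where the real work lies.

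The focusing criterion yields
\[
g'_n = l_n\cdot f_n\cdot b_n,
\]
with $l_n\in \bigcap_{\omega\in\Omega}\omega L(\RR\times\QQ_S)\omega^{-1}$, with $f_n\in N\cap Z_{G_S^{\der}}(\Omega)$, and with $(b_n)_{n\geq 0}$ bounded. Crucially, the centralising factor $f_n$ is \emph{not} bounded a priori; there is no $\gamma_n\in\Gamma$ handed to you by \cite{RZ}. Your factorisation $g'_n=\gamma_n\lambda_n z_n g_\infty$ with $z_n$ in a fixed compact of $Z_{G_S^{\der}}(\Omega)$ is the desired endpoint, not the starting point.

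The paper then performs two separate reductions. First (\S5.3), the bounded $b_n$ is absorbed not into $\Gamma$ but into $K$: one passes to a subsequence so that $b_n tK$ is constant in the discrete set $G^{\der}(\QQ_S)K/K$, whence $\pi_\star(\mu_\Omega\cdot l_nf_nb_n)=\pi_\star(\mu_\Omega\cdot l_nf_n)$. Second (\S5.4), the unbounded $f_n\in N\cap Z_{G_S^{\der}}(\Omega)$ is handled: the $S$-Shafarevich hypothesis gives cocompactness of $Z_{G_S^{\der}}(M)\cap N$ in $Z_{G_S^{\der}}(\Omega)\cap N$, and then $\QQ$-anisotropy of $Z_{G^{\der}}(M)$ lets one write $f_n=\gamma_n\phi_n$ with $\gamma_n\in\Gamma\cap N'$ and $\phi_n$ bounded, after which the $\gamma_n$ commutes past $\Omega$ and is absorbed into $\Gamma$ on the left. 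Your sentence ``absorbing $z_n$ into $\gamma_n$ up to the right action of $K_h\times K$'' conflates these two distinct mechanisms and does not explain how the $\Gamma$-factor is produced.
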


\subsection{A special case.}
Let us first treat the case where~$g'_n$ belongs to~$\bigcap_{\omega\in \Omega}\omega \Lpp\omega^{-1}$. This is the typical situation featuring the dynamics explicited by~\cite[Theorem~3]{RZ}. We will see later 
in this section how to reduce ourselves to this case.

We first note that~$\Omega$  is a compact subgroup.  Therefore~$\Omega=\overline{\Omega}$ and
\[\Supp(\nu)=\overline{\Gamma\backslash\Gamma\cdot\Omega}=\Gamma\backslash\Gamma\cdot\overline{\Omega}\text{ and }\bigcap_{\omega\in \Omega}\omega \Lpp\omega^{-1}=\bigcap_{\omega\in \overline{\Omega}}\omega \Lpp\omega^{-1}.\]

For any~$\omega$ in~$\Omega$ we rewrite
\[
\Gamma\cdot \omega \cdot g'_n=\Gamma\cdot(\omega \cdot g'_n\cdot \omega^{-1})\omega\in \Gamma\Lpp\omega.
\]
It follows that
\[
\Supp(\wt{\mu}_n)=\Gamma\backslash\Gamma\Omega\cdot g'_n\subseteq\Gamma\backslash\Gamma\Lpp\Omega=\Supp(\mu_{\Lpp}\star\nu).
\]
It is now enough to observe that
\[
\mu_\infty=\pi_\star(\mu_{\Lpp}\star\nu),\]
 which are canonical measures supported on a union of real weakly $S$-special subvarieties associated with~$L$. 


\subsection{The focusing criterion's factorisation}
We recall that we have a limiting distribution
\[
\lim_{n\to\infty} \wt{\mu}_{n}= \mu_{\Lpp} \star \nu\cdot g_\infty
\]
where $\wt{\mu}_{n}=\mu_\Omega\cdot g'_n$ satisfies
\[
	\mu_n=\pi_\star\left(\wt{\mu_n}\right).
\]
We ensured that~$g'_n$ belongs to~$G^\der(\QQ_S)$ (viewed as a subgroup of~$G(\RR\times\QQ_S)$).

We want to prove that, after possibly extracting a subsequence, 
\[
\Supp({\mu}_n)=\Supp(\pi_\star(\wt{\mu}_n))\text{ is contained in }\Supp({\mu}_\infty)=\Supp(\pi_\star(\wt{\mu}_\infty)).
\]

As we are allowed to extract subsequences we may use the more stringent conclusions of~\cite[Theorem~3]{RZ},
the \emph{focusing criterion}, according to which we  may factor
\begin{equation}\label{decompo}
	g'_n=l_n\cdot f_n\cdot b_n
\end{equation}
where~$l_n\in \bigcap_{\omega\in \Omega} \omega L(\RR\times\QQ_S)\omega^{-1}$,
where~$(b_n)_{n\geq 0}$ is a bounded sequence and where~$f_n\in N\cap Z_{G_S^\der}(\Omega)$.

We note that such a decomposition occurs if and only if it occurs place by place. In particular, as
the real component of~$g'_n$ is the neutral element, we may, and we will, substitute the real components of~$l_n$,
of~$f_n$ and of~$b_n$ by the neutral element and still have a decomposition a above. To summarize: without loss of generality, we may assume 
\[l_n, f_n, b_n\in G^\der_S.\]

Moreover, the $S$-Shafarevich hypothesis implies that~$Z_{G_S^\der}(M)$ is cocompact in~$Z_{G_S^\der}(\Omega)$.
\begin{lem} The subgroup~$Z_{G_S^\der}(M)\cap N$ of~$Z_{G_S^\der}(\Omega)\cap N$ is cocompact.
\end{lem}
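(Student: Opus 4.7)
\begin{prf}
The plan is to derive the lemma from the cocompactness of $Z_{G_S^\der}(M)$ in $Z_{G_S^\der}(\Omega)$ (already established from the $S$-Shafarevich hypothesis by specialising Proposition~\ref{charSha} to $G^\der_S$, noting that $Z_{G_S^\der}(\Omega)=Z_{G_S^\der}(H)$ where $H$ is the $\QQ_S$-algebraic envelope of $\Omega$) by controlling the intersection with the $\QQ$-algebraic subgroup $N$ via the algebraic structure on both sides.

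First I would introduce the natural continuous injection
\[
\bar\phi\colon\bigl(Z_{G_S^\der}(M)\cap N\bigr)\backslash\bigl(Z_{G_S^\der}(\Omega)\cap N\bigr)\hookrightarrow Z_{G_S^\der}(M)\backslash Z_{G_S^\der}(\Omega)
\]
coming from the inclusion $Z_{G_S^\der}(\Omega)\cap N\subseteq Z_{G_S^\der}(\Omega)$. Since the target is compact, the lemma reduces to showing that $\bar\phi$ is a topological embedding with closed image. The image is exactly the orbit of the base point under the right action of the $\QQ_S$-algebraic subgroup $Z_{G_S^\der}(\Omega)\cap N$, with stabiliser $Z_{G_S^\der}(M)\cap N$. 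Local closedness of the orbit is a consequence of the algebraic structure (Chevalley), after which the open-mapping theorem for continuous actions of locally compact $\sigma$-compact groups on Hausdorff spaces makes $\bar\phi$ a homeomorphism onto its image.

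The main---and hardest---point is to show that this image is actually closed in the compact $Z_{G_S^\der}(M)\backslash Z_{G_S^\der}(\Omega)$. For this I would exploit that $Z_{G_S^\der}(M)$ and $Z_{G_S^\der}(\Omega)=Z_{G_S^\der}(H)$ are reductive $\QQ_S$-algebraic subgroups of the reductive group $G^\der_{\QQ_S}$, so by Matsushima's theorem the quotient variety $X:=Z_{G_S^\der}(M)\backslash Z_{G_S^\der}(\Omega)$ is affine. The orbit in question is contained in the compact $X(\QQ_S)$, hence its closure is compact; the obstacle is to rule out boundary orbits. This I would handle by a $\QQ_S$-analytic slice argument: the exponential map identifies a neighbourhood of the identity in $Z_{G_S^\der}(\Omega)\cap N$ with a neighbourhood of zero in $\Lie(Z_{G_S^\der}(\Omega)\cap N)$, and passing to the quotient by $\Lie(Z_{G_S^\der}(M)\cap N)$ yields a linear subspace of $\Lie(X)$ serving as a local slice at the base point. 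By homogeneity of the action this local picture propagates to every point of the orbit, showing the orbit is locally closed; combined with the compactness of its closure, this forces closedness, and the compactness of the source of $\bar\phi$ follows.
\end{prf}
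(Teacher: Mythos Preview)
Your reduction to showing that the image of~$\bar\phi$ is closed in the compact quotient~$Z_{G_S^\der}(M)\backslash Z_{G_S^\der}(\Omega)$ is correct, and the use of Matsushima to get affineness is fine. The gap is in the last step: the implication ``locally closed with compact closure~$\Rightarrow$ closed'' is simply false, even for orbits of algebraic groups on~$\QQ_p$-points. For instance the~$\GG_m$-orbit of~$[1:1]$ in~$\PP^1(\QQ_p)$ is open (hence locally closed), has compact closure~$\PP^1(\QQ_p)$, and is not closed. Your slice argument establishes only local closedness, which was already clear from Chevalley; it does not rule out boundary orbits. So as written the proof does not close.

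The paper takes a different route that avoids the closedness question entirely. Write~$A=Z_{G_S^\der}(\Omega)\cap N$, an algebraic group over~$\QQ_S$, and let~$A^{+}$ be the (normal) subgroup of~$A(\QQ_S)$ generated by the~$\QQ_S$-points of its unipotent subgroups and split tori. Two facts are used. First,~$A(\QQ_S)/A^{+}$ is compact: the unipotent radical of~$A$ lies in~$A^{+}$, and for the reductive Levi the quotient by the subgroup generated by unipotents and split tori is anisotropic, hence has compact~$\QQ_S$-points. Second,~$A^{+}\subseteq Z_{G_S^\der}(M)$: the map~$A\to Z_{G_S^\der}(M)\backslash Z_{G_S^\der}(\Omega)$ lands in a compact subset of an affine variety, and one-parameter unipotent groups and split tori admit no nonconstant bounded regular maps to affine varieties, so they lie in the fibre over the base point. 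Hence~$A^{+}\subseteq Z_{G_S^\der}(M)\cap N$, and~$(Z_{G_S^\der}(M)\cap N)\backslash A$ is a continuous image of the compact~$A^{+}\backslash A$. This structure-theoretic argument is what you are missing; plugging it in would simultaneously give closedness of the image of~$\bar\phi$ (as the image of a compact set) and render the slice discussion unnecessary.
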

\begin{proof}Note that~$Z_{G_S^\der}(\Omega)$, as it is a centraliser, is an algebraic subgroup of~$G_S$. Let~$Z^+$ be the maximal isotropic connected $\QQ_S$-subgroup of~$Z_{G_S^\der}(\Omega)$ (the product of the non compact factors of the factor groups~$Z_{G_S^\der}(\Omega)\cap G(\QQ_p)$). This is a normal subgroup of~$Z_{G_S^\der}(\Omega)$. The isotropic factors are generated by unipotent subgroup and split tori. Hence every regular function on~$Z^+$ which is bounded is actually constant.
It also contained in every cocompact algebraic reductive subgroup of~$Z_{G_S^\der}(\Omega)$ (the isotropic factors are generated by unipotent subgroup and split tori, on there which have no nontrivial bounded map).
\end{proof}
The quotient~$\left.(Z_{G_S^\der}(\Omega)\cap N)\middle/(Z_{G_S^\der}(M)\cap N)\right.$ can be identified, as a group, to~$NZ_{G_S^\der}(M)/Z_{G_S^\der}(M)$.

\subsection{Getting rid of the bounded factor~$b_n$} The bounded factors~$b_n$ are just translating, at the ``infinite level''~$\Gamma\backslash G^\der(\RR\times\QQ_S)$, the situation. They involve no asymptotic dynamical feature, and as they belong to~$G_S$, they will essentially be killed at finite level, modulo~$K$. Here are the details.

Possibly passing to an extracted subsequence, we may assume that the bounded
sequence~$(b_n)_{n\geq0}$ is convergent in~$G^\der(\QQ_S)$, with limit, say,~$b_\infty$. It follows that~$(b_n tK)_{n\geq0}$ is a convergent sequence in the space~$G^\der(\QQ_S)K/K$ (it converges to~$b_\infty tK$). As this quotient is a discrete space the convergent sequence~$(b_n tK)_{n\geq0}$ is actually eventually constant.
Possibly extracting further, we may assume that this is a constant sequence.

Substituting~$t$ with~$b_\infty\cdot t$ we may assume~$b_\infty=1$ in~$G^\der(\QQ_S)$,
and thus~$b_ntK=tK$. For any~$x$ in~$\Gamma\backslash G^\der(\RR\times\QQ_S)$ we have~$\pi(x\cdot b_n)=\pi(x)$. Hence
\[
	\pi_\star(\mu\cdot b_n)
	=
	\pi_\star(\mu)
\]
for every bounded measure~$\mu$ on~$\Gamma\backslash G^\der(\RR\times\QQ_S)$. In particular
\[
	\mu_n
	=
	\pi_\star(\wt{\mu}_n)
	=
	\pi_\star(\mu_\Omega\cdot l_n\cdot f_n\cdot b_n)
	=
	\pi_\star(\mu_\Omega\cdot l_n\cdot f_n).
\]
In order to prove, in~$\Gamma\backslash G^\der(\RR\times\QQ_S)K/K\subseteq \Sh_K(G,X)$
\[
\Supp(\pi_\star(\wt{\mu}_n))=\Supp(\mu_n)\subseteq\Supp(\mu_\infty)
\]
we may substitute~$\wt{\mu}_n=\mu_\Omega\cdot l_n\cdot f_n\cdot b_n$ with~$\mu_\Omega\cdot l_n\cdot f_n$. In other terms we may assume~$b_n=1$.

\subsection{Getting rid of the centralising factor~$f_n$} Actually the element~$n_\infty$, that we got rid of, is related to the~$f_n$. Part of the argumentation here will parallel the one that dealt with~$n_\infty$.

As~$f_n$ belongs to both~$Z_{G^\der(\QQ_S)}(\Omega)$ and the normaliser of~$\Lpp$,
we have
\[
f_nM{f_n}^{-1}
=
\bigcap_{\omega\in\Omega}f_n\omega{f_n}^{-1}\cdot f_n\Lpp{f_n}^{-1}\cdot f_n \omega^{-1}{f_n}^{-1}
=
\bigcap_{\omega\in\Omega}\omega (f_n\Lpp{f_n}^{-1})\omega^{-1}.
\]
 the element~$l'_n=f_n^{-1} \cdot l_n\cdot f_n$ belongs to~$\bigcap_{\omega\in\Omega}\omega\Lpp\omega^{-1}$ as much as~$l_n$ does. And so does~$l''_n=\omega\cdot l'_n\cdot\omega^{-1}$. Let us now rewrite
\[
\Gamma\backslash\Gamma \cdot \omega \cdot l_n\cdot f_n
=
\Gamma\backslash\Gamma \omega\cdot f_n \cdot l'_n
=
\Gamma\backslash\Gamma f_n\cdot \omega \cdot l'_n
=
\Gamma\backslash\Gamma f_n\cdot l''_n\cdot \omega.
\]
We consider the algebraic subgroups of~$G^\der(\RR\times\QQ_S)$ given by 
\begin{subequations}
\begin{eqnarray}
N'&=N\cap Z_{G}(M)\\
M&=\bigcap_{\omega\in\Omega}\omega\Lpp\omega^{-1}.
\end{eqnarray}
\end{subequations}
We note that the coset
\[
\Gamma\backslash\Gamma f_n\cdot l''_n
\]
in~$\Gamma\backslash G^\der(\RR\times\QQ_S)$ belongs to the subspace
\[
\Gamma\backslash\Gamma \cdot N'\simeq (\Gamma\cap N')\backslash N'.
\]
As~$M$ is normalised by~$N'$, we may consider the quotient space
\[
(\Gamma\cap N')\backslash N'\cdot M/M
\]
We may hence consider the double coset
\[
(\Gamma\cap N')\cdot f_n\cdot M.
\]
We claim that this sequence is bounded in~$(\Gamma\cap N')\backslash N'M/M$.
\begin{proof}
By contradiction, assume not. Then this sequence of double cosets diverges to infinity in~$
(\Gamma\cap N')\backslash N'\Lpp/\Lpp$. A fotiori~$(\Gamma\cap N')\cdot f_n\cdot l$ diverges to infinity,
uniformly for every~$l$ in~$\Lpp$.  As
\[
(\Gamma\cap N')\backslash N'\simeq \Gamma\backslash\Gamma \cdot N'\subseteq \Gamma\backslash G^\der(\RR\times\QQ_S)
\]
is a closed immersion, hence proper, the~$\Gamma\cdot f_n\cdot l''_n$ diverges uniformly to infinity
in~$\Gamma\backslash G_S^\der$. As~$\Omega$ is bounded, so does the~$\Gamma\cdot f_n\cdot l''_n\cdot \omega$.
It follows that
\[
\lim\wt{\mu}_n=0,
\]
(strongly on every compact) which is not the case.
\end{proof}
Possibly extracting a subsequence we may assume that~$(\Gamma\cap N')\cdot f_n\cdot M$ is convergent,
with some limit, say~$(\Gamma\cap N') n_\infty M$ with~$n_\infty$ in~$N'$. We may write, inside~$N'$,
\[
{\gamma_n}^{-1}\cdot f_n\cdot \lambda_n=n_\infty \cdot \beta_n
\]
with~$\gamma_n$ in~$\Gamma\cap N'$, with~$\lambda_n$ in~$M$ and~$(\beta_n)_{n\geq0}$ a bounded sequence in~$N'$.
We rearrange
\[
\Gamma\backslash\Gamma \cdot \omega \cdot l_n\cdot f_n=
\Gamma\backslash\Gamma\gamma_n\cdot \omega \cdot l'''_n\cdot \lambda'_n\cdot n_\infty\cdot \beta_n.
\]
where~$\omega\gamma_n=\gamma_n\omega$, where~$l'''_n=\gamma_n^{-1}l_n\gamma_n$ and~$\lambda'_n=(n_\infty\cdot \beta_n)\lambda_n\cdot(n_\infty\cdot \beta_n)^{-1}$.
We may factor~$\Gamma\gamma_n=\Gamma$.

\section{Zariski closedness from the $S$-Mumford-Tate hypothesis.}\label{SectionMT}

We put ourselves in the situation of Seciton~\ref{SectionRZ}. We will
use the $S$-Mumford-Tate property to reach the stronger conclusion~\eqref{TheoremeMT} of Theorem~\ref{Theoreme1}, namely that we obtain actual weakly special subvarieties.

\subsection{Normalisation by rational monodromy}
\begin{prop} Assume that the algebraic monodromy subgroup~$H_{S}$ is 
definable over~$\QQ$, that is of form~$H_S=H_{\QQ_S}$ for 
some~$\QQ$-subgroup~$H$ of~$G$.

Then the subgroup of Ratner class~$L$ in~$G$ is normalised by~$H$.
\end{prop}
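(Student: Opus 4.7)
The plan is to combine the focusing decomposition from the application of \cite[Theorem~3]{RZ} in Section~\ref{Reduction} with the $\QQ$-rationality hypothesis on $H_S$, and then to descend an inclusion of $\QQ_S$-algebraic subgroups to an inclusion of $\QQ$-subgroups. The argument splits into three steps.

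\textbf{Step 1: $\Omega$ normalises $L$.} Recall from the focusing decomposition~\eqref{decompo} that $g'_n = l_n \cdot f_n \cdot b_n$ with $l_n \in \bigcap_{\omega \in \Omega} \omega \Lpp \omega^{-1}$. Equivalently, for every $\omega \in \Omega$ and every $n$, the conjugate $\omega^{-1} l_n \omega$ lies in $\Lpp \subseteq L(\RR\times\QQ_S)$. The point is that $L$ is characterised by \cite[Theorem~3]{RZ} as the minimal $\QQ$-subgroup of $G^{\der}$ of $S$-Ratner class such that the divergent factor $l_n$ lies in $L(\RR\times\QQ_S)$ modulo bounded and centralising terms; since $\Omega$ is compact, the sequence $(\omega^{-1} g'_n \omega)_{n\geq 0}$ still satisfies the analytic stability property of~\S\ref{sous section stabilite}, so the focusing criterion applied to it produces the same Ratner subgroup. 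Comparing the two decompositions forces $\omega^{-1} L \omega = L$. Hence $\Omega \subseteq N_{G^{\der}}(L)(\QQ_S)$.

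\textbf{Step 2: The $\QQ_S$-algebraic envelope of $\Omega$ normalises $L$.} Since $N_G(L)$ is a $\QQ$-algebraic subgroup, it is Zariski closed, so the inclusion of Step~1 extends to the $\QQ_S$-algebraic envelope $H$ of $\Omega$. By~\S\ref{H Omega}, this envelope is the Zariski neutral component of $\widetilde{H}_S \cap G^{\der}_{\QQ_S}$, where $\widetilde{H}_S = H_S \cdot \widetilde{Z}$ and $\widetilde{Z}$ is a Zariski open subgroup of $Z(G)(\QQ_S)$. Since $H_S = H_{\QQ_S}$ is $\QQ$-rational by hypothesis and $Z(G)$ is a $\QQ$-subgroup, $\widetilde{H}_S$ is the $\QQ_S$-points of a $\QQ$-subgroup; therefore $H \subseteq N_{G^{\der}}(L)_{\QQ_S}$ is an inclusion of $\QQ$-algebraic subgroups, stated over $\QQ_S$.

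\textbf{Step 3: Descent to $\QQ$ and conclusion.} An inclusion of $\QQ$-subvarieties of a $\QQ$-algebraic group which is valid over $\QQ_S$ is valid over $\QQ$ (both sides may be tested on $\overline{\QQ}$-points, or equivalently by comparison of defining ideals). Thus $(\widetilde{H}_S \cap G^{\der})^\circ \subseteq N_{G^{\der}}(L)$ as $\QQ$-subgroups of $G$. Finally, $Z(G)$ commutes pointwise with $G^{\der} \supseteq L$, hence normalises $L$ trivially; so $H \cdot Z(G)$ normalises $L$, and in particular $H$ itself is contained in $N_G(L)$, which is exactly the claim that $H$ normalises $L$.

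The main obstacle lies in Step~1, where one must extract from \cite[Theorem~3]{RZ} the minimality characterisation of the Ratner subgroup that allows conjugation by $\Omega$ to be controlled. An alternative route, which avoids invoking this minimality explicitly, would be to argue directly from the equality $\omega \mu_\Omega \omega^{-1} = \mu_\Omega$ (bi-invariance of Haar measure on $\Omega$) and the uniqueness of the decomposition of $\mu_\infty = \mu_{\Lpp} \star \nu \cdot g_\infty$ provided by \cite{RZ}: conjugating $\mu_n$ by $\omega$ yields a sequence whose limit is $\mu_{\omega \Lpp \omega^{-1}} \star \nu \cdot (\omega^{-1} g_\infty)$, and matching this with the original limit forces $\omega \Lpp \omega^{-1} = \Lpp$ and hence $\omega L \omega^{-1} = L$.
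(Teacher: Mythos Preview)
Your Step~1 contains a genuine gap, and in fact the $\QQ$-rationality hypothesis on~$H_S$ is never really used where it matters in your argument. The problem is that the minimality of the Ratner group~$L$ provided by~\cite[Theorem~3]{RZ} is minimality among \emph{$\QQ$-subgroups} of~$G^{\der}$. Conjugating~$L$ by an element~$\omega\in\Omega\subseteq G^{\der}(\QQ_S)$ produces~$\omega L\omega^{-1}$, which has no reason to be defined over~$\QQ$, so you cannot compare it to~$L$ via minimality. Your alternative route has the same defect: right-translating~$\wt{\mu}_n=\mu_\Omega\cdot g'_n$ by~$\omega$ gives~$\mu_\Omega\cdot g'_n\cdot\omega$ (using~$\mu_\Omega\cdot\omega^{-1}=\mu_\Omega$), whose limit is~$\mu_{\Lpp}\star\nu\cdot(g_\infty\omega)$ with the \emph{same}~$L$, not~$\mu_{\omega\Lpp\omega^{-1}}\star\nu\cdot(\omega^{-1}g_\infty)$; there is no genuine conjugation available on~$\Gamma\backslash G^{\der}(\RR\times\QQ_S)$ since~$\omega\Gamma\omega^{-1}\neq\Gamma$ in general.

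The paper's proof proceeds quite differently and uses the hypothesis in an essential way. Set~$L^H=\bigcap_{h\in H}hLh^{-1}$ (equivalently~$\bigcap_{\omega\in\Omega}\omega L\omega^{-1}$, since~$\Omega$ is Zariski dense in~$H_{\QQ_S}$). Because both~$H$ and~$L$ are~$\QQ$-groups, so is~$L^H$; hence~$\Gamma\backslash\Gamma\cdot L^H(\RR\times\QQ_S)\cdot\Omega$ is a \emph{closed} subset of the $S$-arithmetic quotient. The focusing decomposition gives~$g'_n=l_n f_n b_n$ with~$l_n\in L^H(\RR\times\QQ_S)$ and~$f_n\in N\cap Z_{G_S^{\der}}(\Omega)$. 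Under the hypothesis (together with $S$-Shafarevich), $Z_G(H)$ is $\QQ$-anisotropic, so~$f_n=\gamma_n\phi_n$ with~$\gamma_n\in\Gamma\cap Z_G(H)$ and~$\phi_n$ bounded; absorbing bounded terms and using that~$\gamma_n$ normalises~$L^H$ and centralises~$\Omega$, one finds~$\Supp(\wt{\mu}_n)\subseteq\Gamma\backslash\Gamma L^H\Omega$ for all~$n$. Passing to the limit in this closed set yields~$\Supp(\wt{\mu}_\infty)\subseteq\Gamma\backslash\Gamma L^H\Omega$, whence~$\dim L\leq\dim L^H$. Since~$L^H\subseteq L$, equality follows, i.e.~$H$ normalises~$L$. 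The $\QQ$-rationality of~$H$ is what makes~$L^H$ a $\QQ$-group and the support-containment argument possible; your Steps~2--3 use it only for a descent that is, at that point, vacuous.
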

\begin{proof}
We assume that the sequence $(\mu_n)$ converges to a measure $\mu_{\infty}$ associated to a $\QQ$-group $L$. 
We know that 
$$
g_n = L^H \cdot (N \cap Z_G(H)) \cdot O(1)
$$
with 
$$
L^H = \cap_{h} h L h^{-1}
$$
and
$$
N \subset N_G(L).
$$
We want to show that $L=L^H$.

Without loss of generality, we may assume that $O(1)=1$.

We have that $N \cap Z_G(H)$ is defined over $\QQ$ and
$$
N\cap Z_G(H) = N \cap Z_G(H) \cap \Gamma \times \cF.
$$ 
As $Z_G(H)$ is $\QQ$-anisotropic, $N\cap Z_G(H)$ is $\QQ$-anisotropic and therefore $\cF$ is compact.

We write 
$$
g_n = l_n f_n
$$
with $f_n \in (N \cap Z_G(H))(\RR \times \QQ_S)$ and we write
$$
f_n = \gamma_n \cdot \phi_n
$$
with $\gamma_n \in Z_G(H) \cap \Gamma$ and $\phi_n \in \cF$.

As $\cF$ is compact, after extraction, we may assume that $\phi_n$ is convergent and we may assume that the limit is one.

We can therefore assume that 
$$
g_n = l_n \cdot \gamma_n
$$
and $g_n$ stabilises the closed set $\Gamma \backslash \Gamma L^H \cdot U$ (it is closed because $L^H$ is defined over $\QQ$)

and

$\Gamma \backslash \Gamma L^H U$ contains the support of $\mu_n$.

Therefore $\Gamma \backslash \Gamma L^H U$ contains $Supp (\mu_{\infty})$, hence dim $L^H \geq \dim(Sup(\mu_{\infty})) = \dim(L)$.

With the assumption of the Mumford-Tate, we show that $L$ is normalised by $M =H$.
\end{proof}
Under the $S$-Mumford-Tate type hypothesis we immediately deduce the following.
\begin{cor} Assume moreover that~$s$ is of $S$-Mumford-Tate type, that is~$M=H$.

Then~$L(\RR)$ is normalised by~$h$.
\end{cor}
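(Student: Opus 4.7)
The plan is to reduce this corollary to the immediately preceding proposition, and then invoke the defining property of the Mumford--Tate group.

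First, I would check that the $S$-Mumford--Tate hypothesis puts us in the scope of the preceding proposition. By definition~\ref{ProprietesGaloisiennes}\,\eqref{S-MT}, being of $S$-Mumford--Tate type means that $U_S$ is open in $M(\AAA_f)\cap G_S$. Since $M(\QQ_S)$ is the set of $\QQ_S$-points of the $\QQ$-algebraic group $M$ and $U_S$ is open in it, the Zariski closure of $U_S$ in $M_{\QQ_S}$ has the same Lie algebra as $M_{\QQ_S}$, so $H_S^0 = M_{\QQ_S}$. After replacing $E$ by the finite extension ensuring $H_S$ is $\QQ_S$-connected (as in~\S\ref{assumptions}), we get $H_S = M_{\QQ_S}$. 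In particular $H_S$ is defined over $\QQ$, and in the notation of the previous proposition we may take $H = M$.

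Second, I would apply the previous proposition directly: it asserts that $L$ is normalised by $H$, and since $H = M$ here, the group $L$ is normalised by $M$. In particular $L(\RR)$ is normalised by $M(\RR)$.

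Third, I would use the key defining property of the Mumford--Tate group $M$ of $h$: by definition $M$ is the smallest $\QQ$-algebraic subgroup of $G$ such that $h\colon\SSS\to G_\RR$ factors through $M_\RR$. In particular, the image $h(\SSS)$ lies in $M(\RR)$. Combining with the previous step, $h(\SSS)$ normalises $L(\RR)$, which is precisely the statement that $L(\RR)$ is normalised by $h$. Since both inputs (the proposition and the factorisation of $h$ through $M$) are already in hand, there is no real obstacle; the corollary is essentially a direct translation.
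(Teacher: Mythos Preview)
Your proposal is correct and follows exactly the approach the paper intends: the paper presents this corollary as an immediate consequence of the preceding proposition (``we immediately deduce the following''), and your three steps---checking that the $S$-Mumford--Tate hypothesis gives $H_S=M_{\QQ_S}$ so that the proposition applies with $H=M$, concluding that $M$ normalises $L$, and then using that $h$ factors through $M_\RR$---are precisely the implicit reasoning the paper has in mind. Your write-up simply makes explicit what the paper leaves to the reader.
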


\subsection{A criterion for a weakly $S$-special real submanifold to be a weakly special subvariety.}

In this section we show that stonger conclusions \ref{TheoremeMT} and
\ref{Theoreme2-2} of the main theorems \ref{Theoreme1} abd \ref{Theoreme2} respectively hold under the assumption that the~$S$-Mumford-Tate hypothesis holds.
These conclusion follow from the following proposition.


\begin{prop} \label{boutonrouge}
Let~$(G,X)$ be a Shimura datum and~$h$ a point in a connected component~$X^+$ of~$X$.
Let~$L \subset G$ be a subgroup such that~$L_{\RR}$ is normalised by~$h(\SSS)$.

Then the image of~$L(\RR)^+ \cdot x \subseteq X^+$ in~$\Gamma\backslash X$ is a weakly special subvariety,
where~$\Gamma$ is any congruence arithmneitc subgroup of~$G$.
\end{prop}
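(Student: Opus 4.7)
My plan is to identify the orbit $L(\RR)^+\cdot h$ with (a component of) a fibre of a morphism of Shimura data, whence weak-speciality follows from the standard characterisation.

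First, let $M:=\MT(h)$ be the Mumford--Tate group of $h$. The hypothesis that $h(\SSS)$ normalises $L_\RR$ says exactly that $h$ factors through the $\QQ$-algebraic subgroup $N_G(L)_\RR$; by the minimality of $M$ this forces $M\subseteq N_G(L)$, so $M$ normalises $L$. Hence
\begin{equation*}
H\;:=\;L\cdot M
\end{equation*}
is a $\QQ$-algebraic subgroup of $G$ in which $L$ sits as a normal subgroup, through which $h$ factors. The pair $(H,X_H)$ with $X_H:=H(\RR)\cdot h$ is then a Shimura sub-datum of $(G,X)$: Deligne's axioms depend only on $\mathrm{Ad}\circ h$, and are inherited from $(G,X)$ once $h$ factors through $H_\RR$.

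Next I pass to the adjoint Shimura datum $(H^{\mathrm{ad}},X_H^{\mathrm{ad}})$. After replacing $L$ by its Zariski-connected Levi subgroup --- a step which is harmless for the application since the $S$-Mumford--Tate hypothesis forces $L$ to be reductive --- the image $L^{\mathrm{ad}}$ of $L$ in $H^{\mathrm{ad}}$ is a connected normal subgroup of a semisimple group, hence a product of simple factors. This yields a direct-product decomposition
\begin{equation*}
H^{\mathrm{ad}}\;=\;L^{\mathrm{ad}}\times Q,\qquad X_H^{\mathrm{ad},+}\;=\;X_L\times X_Q,
\end{equation*}
under which $h^{\mathrm{ad}}=(h_L,h_Q)$. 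The $L(\RR)^+$-orbit of $h$ corresponds, at the adjoint level, to $X_L\times\{h_Q\}$, namely a fibre of the morphism of Shimura data $(H,X_H)\to(Q,X_Q)$ induced by the quotient $H\to Q$.

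By the standard characterisation of weakly special subvarieties as (components of) fibres of morphisms of Shimura data (Moonen, see the references preceding Proposition~\ref{Ax}), the projection of this fibre in the Shimura variety attached to $(H,X_H)$ is weakly special; transferring it via the finite morphism induced by the inclusion $(H,X_H)\hookrightarrow(G,X)$ identifies the image of $L(\RR)^+\cdot h$ in $\Gamma\backslash X$ with a weakly special subvariety of $\Sh_K(G,X)$. The main technical obstacle is the treatment of any unipotent radical of $L$ in the full generality of the statement: one either uses the reductivity of $L$ that is automatic under $S$-Mumford--Tate, or analyses the $R_u(L)$-orbit separately to see that a $h(\SSS)$-stable unipotent subgroup contributes nothing new to the weak-speciality type of $Z$ after taking the quotient by~$\Gamma$.
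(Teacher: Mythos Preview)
Your overall strategy---produce a Shimura sub\-datum through which $h$ factors and in which $L$ is normal, pass to the adjoint group, obtain a direct product, and recognise the orbit as a fibre---is exactly the paper's first proof. The paper works with $N=N_G(L)^0$ rather than your $H=L\cdot M$, and then refines to $Z(N)\cdot Z^{nc}\cdot L^{nc}$ to strip off compact factors before invoking \cite[Lemme~3.3]{U}, but these are variations on the same theme.

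The genuine gap is your treatment of the reductivity of~$L$. You flag it as ``the main technical obstacle'' and offer two escapes: restrict to the $S$-Mumford--Tate application, or ``replace $L$ by its Levi subgroup''. Neither is satisfactory. The first does not prove the proposition as stated; the second is not harmless, since passing to a Levi factor could shrink the orbit $L(\RR)^+\cdot h$ and there is no argument given that it does not. In fact reductivity is \emph{automatic} from the hypothesis and costs one line: since $h(\SSS)$ normalises $L_\RR$, so does $h(\sqrt{-1})$, and conjugation by $h(\sqrt{-1})$ is a Cartan involution of $G^{\mathrm{ad}}_\RR$; a subgroup stable under a Cartan involution is reductive (see \cite[Th.~4.2]{Satake}). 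With this in hand your $H=L\cdot M$ is reductive, $(H,X_H)$ is a genuine Shimura sub\-datum, the adjoint decomposition $H^{\mathrm{ad}}=\bar L\times Q$ goes through, and the rest of your argument is correct.

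For comparison, the paper also records a second, quite different proof: one checks directly that $L(\RR)^+\cdot h$ is stable under every symmetry $s_{h'}$ for $h'$ in the orbit and has complex tangent spaces (both because $h'(\SSS)$ normalises $L(\RR)^+$), hence is a Hermitian symmetric subspace; Baily--Borel and Borel's extension theorem then give algebraicity of the arithmetic quotient.
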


\begin{proof}
First observe that $L_{\RR}$ is normalised by $h(\sqrt{-1})$ which induces a Cartan involution on $G^{ad}_{\RR}$. 
Therefore $L_{\RR}$ is reductive (see \cite[Th.~4.2]{Satake}).

Let $N = N(L)^0$ be the neutral component of the normaliser of $L$ in $G$. 
Because~$L$ is reductive, so are~$N$ and its centraliser~$Z_G(L)$, and we have
$$
N = Z_G(L) \cdot L.
$$
Note that~$h$ factors through $N_{\RR}$.

We have a almost-product decomposition of semisimple groups
$$
N^{der} = Z_G(L)^{der} \cdot L^{der}.
$$
Let~$Z^{c}$ (resp.~$Z^{nc}$) denote the almost product of the almost $\QQ$-factors
of~$Z_G(L)$ which are~$\RR$-compact (resp.\ which are not~$\RR$-compact). Using
the analogous notation for~$L$, we have 
$$
Z_G(L)^{der} = Z^{nc}\cdot Z^c \text{ and } L^{der} = L^{nc}\cdot L^{c}.
$$

By~\cite[Lemme 3.7]{U}, $x$ factors through $H = Z(N) Z^{nc} L^{nc}$.

Let $X_H = H(\RR)\cdot h$.
By~\cite[Lemme 3.3]{U}, $(H,X_H)$ is a Shimura subdatum of $(G,X)$.

Note that
$$
H^{ad} = Z^{nc,ad} \times L^{nc,ad}
$$
and 
write $h^{ad} = (h_1, h_2)$  in this decomposition.

We have
$$
X_H^{ad} = X_1 \times X_2
$$
where $X_1 =  Z^{nc,ad}(\RR)\cdot h_1$ and $X_2 =  L^{nc,ad}(\RR) \cdot h_2$.

By Lemme 3.3 of \cite{U}, both $(Z^{nc,ad},X_1)$ and $(L^{nc,ad},X_2)$ are Shimura data.

The image of $L(\RR)^+ \cdot h$ in $X_{H^{ad}}$ is $\{ h_1 \} \times X_2$,
as in~\cite[Section~2]{UY3}. This finishes the proof.
\end{proof}

For the sake of completeness, we give an alternative proof of the statement
\cite{boutonrouge}.

\begin{proof}[Alternative proof] The symmetry~$s_h$ of~$X$ at~$x$ is induced 
by the Cartan involution given by the conjugation action of~$h(i)$. But~$h(i)$
normalises~$L(\RR)$, hence normalises its neutral component~$L(\RR)^+$,
from which we get that~$L(\RR)^+\cdot h$ under~$s_h$: it is symmetric at~$h$.

Moreover~$L(\RR)^+$ is normalised by~$h(U(1))$, whose conjugation induces the
complex structure on the tangent~$T_hX$ space of~$X$ at~$h$. We deduce that the tangent space of~$L(\RR)^+\cdot h$ at~$h$ is complex subspace of~$T_hX$.

Notice that~$L$ is normalised by~$lhl^{-1}$ for every~$l\in L(\RR)^+$. By the
argument above,~$L(\RR)^+\cdot h$ is symmetric at every point and has a complex
tangent space at every point: it is a symmetric quasi-complex subspace.

By \cite{Helgason}, this quasi-complex structure is a complex structure:~$L(\RR)^+\cdot h$ is a symmetric holomorphic subvariety.

By a theorem of Baily-Borel~\cite{BailyBorel}, the arithmetic quotients
\[
\left(\Gamma\cap L(\RR)^+\right)\backslash L(\RR)^+\cdot h\text{ and }\Gamma\backslash X
\]
are quasi-projective varieties. And by a theorem of Borel~\cite{BorelExtension},
the embedding into~$\Gamma\backslash X$ is algebraic.

It implies that it is a totally geodesic subvriety in the sense of~\cite{Moonen},
that is a weakly special subvariety in our terminology.
\end{proof}

\end{document}